
\documentclass[aos,preprint]{imsart}
\RequirePackage[OT1]{fontenc}
\RequirePackage{amsthm,amsmath}
\RequirePackage[numbers]{natbib}
\usepackage{pictexwd}
\usepackage{amssymb,graphicx,url}
\usepackage{color}
\usepackage{epstopdf}
\usepackage[normalem]{ulem}


\startlocaldefs    
\setlength{\oddsidemargin}{0.0in}
\setlength{\evensidemargin}{0.0in}
\setlength{\textwidth}{6.5in}
\setlength{\textheight}{8.5in}
\setlength{\topmargin}{-0.3in}

\def\a{\alpha}
\def\b{\beta}
\def\R{\mathbb R}
\def\dd{\Delta}

\def\d{\delta}

\def\K{{\cal K}}

\def\E{{\mathbb E}}

\def\G{{\mathbb G}}

\def\P{{\mathbb P}}
\def\g{\gamma}
\def\l{\lambda}
\def\labda1{\lambda_1}
\def\labda2{\lambda_2}

\def\e{\varepsilon}
\def\f{\phi}
\def\t{\tau}

\def\s{\sigma}

\def\comment#1{\relax}

\def\=in{\mathop{\rm =}}

\newtheorem{theorem}{Theorem}[section]

\newtheorem{lemma}{Lemma}[section] 
\newtheorem{definition}{Definition}[section]

\newtheorem{remark}{Remark}[section]

\numberwithin{equation}{section}
\theoremstyle{plain}
\setattribute{journal}{name}{}
\endlocaldefs

\usepackage{amsmath,here,amsfonts,latexsym,sym1,graphicx,here}
\usepackage{amsthm,curves}
\usepackage{graphicx}
\usepackage{caption}
\usepackage{subcaption}
\usepackage{float}


\def\P{{\mathbb P}}

\def\G{{\mathbb G}}

\def\ee{\epsilon}

\begin{document}
\begin{frontmatter}
\title{Current status linear regression}
\runtitle{current status regression}

\begin{aug}
\author{\fnms{Piet} \snm{Groeneboom}\corref{}\ead[label=e1]{P.Groeneboom@tudelft.nl}
\ead[label=u1,url]{http://dutiosc.twi.tudelft.nl/\textasciitilde pietg/}}
and\ \
\author{\fnms{Kim} \snm{Hendrickx}\ead[label=e2]{kim.hendrickx@uhasselt.be}
\ead[label=u2,url]{http://www.uhasselt.be/fiche_en?voornaam=Kim&naam=HENDRICKX}}
\runauthor{P.\ Groeneboom and K.\ Hendrickx}
\affiliation{Delft University of Technology and Hasselt University}

\address{Delft University of Technology, Mekelweg 4, 2628 CD Delft,
	The Netherlands.\\ 
	\printead{e1}} 

\address{\\} 

\address{Hasselt University, I-BioStat, Agoralaan, B­3590 Diepenbeek, Belgium.\\ 
		\printead{e2}}
\end{aug}

\begin{abstract}
We construct $\sqrt{n}$-consistent and asymptotically normal estimates for the finite dimensional regression parameter in the current status linear regression model, which do not require any smoothing device and are based on maximum likelihood estimates (MLEs) of the infinite dimensional parameter. We also construct estimates, again only based on these MLEs, which are arbitrarily close to efficient estimates, if the generalized Fisher information is finite. This type of efficiency is also derived under minimal conditions for estimates based on smooth non-monotone plug-in estimates of the distribution function. Algorithms for computing the estimates and for selecting the bandwidth of the smooth estimates with a bootstrap method are provided.  The connection with results in the econometric literature is also pointed out.
\end{abstract}

\begin{keyword}[class=AMS]
\kwd[Primary ]{62G05}
\kwd{62N01}
\kwd[; secondary ]{62-04}
\end{keyword}

\begin{keyword}
\kwd{current status}
\kwd{linear regression}
\kwd{MLE}
\kwd{semi-parametric model}
\end{keyword}

\end{frontmatter}

\section{Introduction}
\label{section:intro}
\setcounter{equation}{0}
Investigating the relationship between a response variable $Y$ and one or more explanatory variables is a key activity in statistics. Often encountered in regression analysis however, are situations where a part of the data is not completely observed due to some kind of censoring. In this paper we focus on modeling  a linear relationship when the response variable is subject to interval censoring type I, i.e. instead of observing the response $Y$, one only observes whether or not $Y \le T$ for some random censoring variable $T$, independent of $Y$. This type of censoring is often referred to as the current status model and arises naturally, for example, in animal tumorigenicity experiments (see e.g. \cite{Finkelstein:85} and \cite{Finkelstein:86}) and in HIV and AIDS studies (see e.g. \cite{Shiboski:1992}). Substantial literature has been devoted to regression models with current status data including the proportional hazard model studied in \cite{huang:94}, the accelerated failure time model proposed by \cite{Rabinowitz:95} and the proportional odds regression model of \cite{Rossini:96}. 

The regression model we want to study is the semi-parametric linear regression model $Y = \b_0'X + \varepsilon$, where the error terms are assumed to be independent of $T$ and $X$ with unknown distribution function $F_0$. This model  is closely related to the binary choice model type, studied in econometrics (see e.g. \cite{cosslett:83,cosslett:07},  \cite{klein_spady:93} and \cite{sherman:05}), where, however, the censoring variable $T$ is  degenerate, i.e. $P(T = 0) =1$, and observations are of the type $(X_i,1_{\{Y_i\le 0\}})$. In the latter model, the scale is not identifiable, which one usually solves by adding a constraint on the parameter space such as setting the length of $\beta$ or the first coefficient equal to one.

Our model of interest is parametrized by the finite dimensional regression parameter $\b_0$ and the infinite dimensional nuisance parameter $F_0$ that contains $\b_0$ as one of its arguments. A similar bundled parameter problem was studied by \cite{Ding:11}, where the authors first provide a  framework for the distributional theory of problems with bundled parameters and next prove their theory for efficient estimation in the linear regression model with right censored data. A spline based estimate of the nuisance parameter is proposed.

Although it is indeed tempting to think that some kind of smoothing is needed, like the splines in \cite{Ding:11} or the kernel estimates in the econometric literature for the binary choice model, where even higher order kernels are used (see, e.g., \cite{klein_spady:93}), a maximum rank correlation estimate which does not use any smoothing has been introduced in \cite{han:87}, and this estimator has been proved to be $\sqrt{n}$-consistent and asymptotically normal in  \cite{sherman:93}. However, the latter estimate does not attain the efficiency bounds and one wonders whether it is possible to construct simple discrete estimates of this type and achieve the efficiency bounds. It is not clear how the maximum rank correlation estimate in \cite{han:87} could be used to this end, and we therefore turn to estimators depending on maximum likelihood estimators for the nuisance parameter.

The profile maximum likelihood estimator (MLE) of $\b_0$ was proved to be consistent in \cite{cosslett:83} but nothing is known about its asymptotic distribution, apart from its consistency and upper bounds for its rate of convergence. It remains an open question whether or not the profile MLE of $\b_0$ is $\sqrt{n}$-consistent. \cite{murphy:99} derived an $n^{1/3}$-rate for the profile MLE; we show that without any smoothing it is possible to construct  estimates, based on the MLE for the distribution function $F$ for fixed $\b$, that converge at $\sqrt{n}$-rate to the true parameter. We note, however, that the estimator we propose, based on the nonparametric MLE for $F$ for fixed $\b$, is {\it not} the profile MLE for $\b_0$. The estimator is a kind of hybrid estimator, which is based on the argmax MLE for $F$ for  fixed $\b$, but defined as the zero of a non-smooth score function as a function of $\b$. So we have the remarkable situation that finding the estimate $\hat\beta_n$ as the root of a score equation based on the MLEs $\hat F_{n,\b}$, can be proved to give $\sqrt{n}$-consistent estimates of $\b_0$, in contrast with the argmax approach, using profile likelihood, for which we even still do not know whether it is $\sqrt{n}$-consistent. We go somewhat deeper into this matter in the discussion section of this paper.

A general theoretical framework for semi-parametric models when the criterion function is not smooth is developed in \cite{chen:2003}. The proposed theory is less suited for our score approach since the authors assume existence of a uniform consistent estimator for the infinite dimensional regression parameter with convergence rate not depending on the finite dimensional regression parameter of interest. In the current status linear regression model we have to estimate $\b_0$ and $F_0$ simultaneously, as a consequence the convergence rate of the estimator for $F_0$ depends on the convergence rate of the estimator for $\b_0$, the parameters $\b_0$ and $F_0$ are bundled and therefore we cannot apply their theory.

\cite{murphy:99} considers efficient estimation for the current status model with a 1-dimensional regression parameter $\b$ via a penalized maximum likelihood estimator under the conditions that $F_0$ and $u\mapsto E_{\b}(X|T-\b X=u)$ are three times continuously differentiable and that the data only provide information about a part of the distribution function $F_0$, where $F_0$ stays away from zero and 1.
\cite{zhang:98} proposes an estimation equation for $\b$, derived from an inequality on the conditional covariance between $X$ and $\dd$ conditional on $T - \b'X$, and use a U-statistics representation, involving summation over many indices.
\cite{Shen2000} considered an estimator based on a random sieved likelihood, but the expression for the efficient information (based on the generalized Fisher information) in this paper seems to be different from what we and the authors mentioned above obtain for this expression.

Approaches to $\sqrt{n}$-consistent and efficient estimation of the regression parameters in the binary choice model were considered by \cite{klein_spady:93}  and \cite{cosslett:07} among others. For a derivation of the efficient information $\tilde\ell_{\b_0,F_0}^2$, defined by 
\begin{align}
\label{efficient_score}
& \tilde\ell_{\b,F}(t,x,\d)= \left\{\E(X|T-\b'X=t-\b' x)-x\right\}f(t-\b' x)\nonumber\\
&\qquad\qquad\qquad\qquad\qquad\qquad\qquad\cdot\left\{\frac{\d}{F(t-\b' x)}-\frac{1-\d}{1-F(t-\b' x)}\right\},
\end{align}
where we assume $f(t-\b' x)>0$, we refer to \cite{cosslett:87}  for the binary choice model, and next to \cite{Huang:93} and \cite{murphy:99} for the current status regression model.

As mentioned above, the condition that the support of the density of $T-\beta_0'X$ is strictly contained in an interval $D$ for all $\beta$ and that $F_0$ stays strictly away from $0$ and $1$ on $D$ is used in \cite{murphy:99}. This condition is also used in \cite{Huang:93} and \cite{Shen2000}. The drawback of the assumption is that we have no information about the whole distribution $F_0$. It also goes against the usual conditions made for the current status model, where one commonly assumes that the observations provide information over the whole range of the distribution one wants to estimate. We presume that this assumption is made for getting the Donsker properties to work and to avoid truncation devices that can prevent the problems arising if this condition is not made, such as unbounded score functions and ensuing numerical difficulties. Examples of truncation methods can be found in \cite{cosslett:07} and \cite{klein_spady:93} among others where the authors consider truncation sequences that converge to zero with increasing sample size. We show that it is possible to estimate the finite dimensional regression parameter $\b_0$ at $\sqrt n-$rate based on a fixed truncated subsample of the data where the truncation area is determined by the quantiles of the infinite dimensional nuisance parameter estimator. 

The paper is organized as follows. The model, its corresponding log likelihood and a truncated version of the log likelihood are introduced in Section \ref{section:model}. In this section, we also discuss the advantages of a score approach over the maximum likelihood characterization. The behavior of the MLE for the distribution function $F_0$ in case $\b$ is not equal to $\b_0$ is studied in Section \ref{section:MLE}. We first construct in Section \ref{section:estimates}, based on a score equation, a $\sqrt{n}-$consistent but inefficient estimate of the regression parameter based on the MLE of $F_0$ and show how an estimate of the density, based on the MLE, can be used to extend the estimate of the regression parameter to an estimate with an asymptotic variance that is arbitrarily close to the information lower bound.

Next, we give the asymptotic behavior of a plug-in estimator which is obtained by a score equation derived from the truncated log likelihood in case a second order kernel estimate for the distribution function $F_0$ is considered. We show that the latter estimator is $\sqrt{n}$-consistent and asymptotically normal with an asymptotic variance that is arbitrarily (determined by the truncation device) close to the information lower bound, just like the estimator based on the MLE we discussed in the preceding paragraph.

The estimation of an intercept term, that originates from the mean of the error distribution, is outlined in Section \ref{section:intercept}. Section \ref{section:computation} contains details on the computation of the estimates together with the results of our simulation study; a bootstrap method for selecting a bandwidth parameter is also given. A discussion of our results is given in Section \ref{section:discussion}. Section \ref{section:Appendix} contains the derivation of the efficient information given in (\ref{efficient_score}). The proofs of the results given in this paper are worked out in the supplemental article \cite{GroeneboomHendrickx-supplement}.

\section{Model description}
\label{section:model}
Let $(T_i, X_i, \dd_i), i = 1, \ldots,n $ be independent and identically distributed observations from $(T, X,\dd)=(T, X,1_{\{Y \le T\}})$. We assume that $Y$ is modeled as
\begin{align}
\label{model}
Y=\b_0' X  + \e,
\end{align}
where $\b_0$ is a $k$-dimensional regression parameter in the parameter space $\Theta$ and $\e$ is an unobserved random error, independent of $(T,X)$ with unknown distribution function $F_0$. We assume that the distribution of $(T,X)$ does not depend on $(\b_0, F_0)$ which implies that the relevant part of the log likelihood for estimating $(\b_0, F_0)$ is given by: 
\begin{align} 
\label{log_likelihood3}
l_n(\b,F)&=\sum_{i=1}^n\left[\dd_i\log F(T_i-\b' X_i)+(1-\dd_i)\log\{1-F(T_i- \b'X_i)\}\right] \nonumber
\\
&=\int\left[\d\log F(t-\b'x)+(1-\d)\log\{1-F(t-\b'x)\}\right]\,d\P_n(t,x,\d),
\end{align}
where $\P_n$ is the empirical distribution  of the $(T_i,X_i,\dd_i)$. We will denote the probability measure of $(T,X, \dd)$ by $P_0$.  
We define the truncated log likelihood  $l_n^{(\ee)}(\b, F)$ by
\begin{equation}
\label{truncated_likelihood}
\int_{F(t-\b' x)\in[\ee,1-\ee]}\left[\d\log F(t-\b'x)+(1-\d)\log\{1-F(t-\b'x)\}\right]\,d\P_n(t,x,\d),
\end{equation} 
where $\ee\in(0,1/2)$ is a truncation parameter. 
Analogously, let,
\begin{align}
\label{first_score_function}
\psi_n^{(\ee)}(\b,F) =\int_{F(t-\b' x)\in[\ee,1-\ee]} \phi(t,x,\d)\{\d - F(t-\b'x) \}\,d\P_n(t,x,\d),
\end{align}
define the truncated score function for some weight function $\phi$.  In this paper, we consider estimates of $\b_0$, derived by the idea of solving a score equation $\psi_n^{(\ee)}(\b,\hat F_\b)= 0$ where $\hat F_\b$ is an estimate of $F$ for fixed $\b$. A motivation of the score approach is outlined below,  we have three reasons for using the score function characterization instead of the argmax approach for the estimation of $\b_0$.
 \begin{enumerate}
 	\item[(i)] Our simulation experiments indicate that, even if the profile MLE would be $\sqrt{n}$-consistent, its variance is clearly bigger than the other estimates we propose. 
 	\item[(ii)] The characterization of $\hat\b_n$ as the solution of a score equation
 	\begin{align*}
 	\psi_n\left(\b,\hat F_{n,\b}\right)=0,
 	\end{align*}
 	(see, e.g., (\ref{first_score_function})), where $\hat F_{n,\b}$ is the MLE for fixed $\b$ maximizing the log likelihood defined in (\ref{log_likelihood3}) over all $F \in {\cal{F}}$ = \{$F:\R\to [0,1]: F$ is a distributuion function\}, gives us freedom in choosing the function $\psi_n$ of which we try to find the root $\hat\b_n$. Smoothing techniques can be used but are not necessary to obtain $\sqrt{n}$-convergence of the estimate.
 	
 	In this paper, we first choose a function $\psi_n$, which produces a $\sqrt{n}$-consistent and asymptotically normal estimate of $\b_0$, and does not need any smoothing device. Just like the Han maximum correlation estimate, this estimate does not attain the efficiency bound, although the difference between its asymptotic variance and the efficient asymptotic variance is rather small in our experiments. More details are given in Section \ref{section:computation}.
 	
 	Next we choose a function $\psi_n$ which gives (only depending on our truncation device) an asymptotic variance which is arbitrarily close to the efficient asymptotic variance. In this case we need an estimate of the density of the error distribution and are forced to use smoothing in the definition of $\psi_n$. The estimate, although efficient in the sense we use this concept in our paper, is not necessarily better in small samples, though.
 	\item[(iii)] The ``canonical'' approach to proofs that argmax estimates of $\b_0$ are $\sqrt{n}$-consistent has been provided by \cite{sherman:93}. His Theorem 1 says that $\|\hat\b_n-\b_0\|=O_p(n^{-1/2})$, where $\|\cdot\|$ denotes the Euclidean norm, if $\hat\b_n$ is the maximizer of $\Gamma_n(\b)$, with population equivalent $\Gamma(\b)$ and
 	\begin{itemize}
 		\item[(a)] there exists a neighborhood $N$ of $\b_0$ and a constant $k>0$ such that
 		$$
 		\Gamma(\b)-\Gamma(\b_0)\le-k\|\b-\b_0\|^2,
 		$$
	 	for $\b\in N$, and
 		\item[(b)] uniformly over $o_p(1)$ neighborhoods of $\b_0$,
 		\begin{align*}
 		&\Gamma_n(\b)-\Gamma_n(\b_0)\\
 		&=\Gamma(\b)-\Gamma(\b_0)+O_p\left(\|\b-\b_0\|/\sqrt{n}\right)+o_p\left(\|\b-\b_0\|^2\right)+O_p\left(n^{-1}\right).
 		\end{align*}
 	\end{itemize} 
If we try to apply this to the profile MLE $\hat\b_n$, it is not clear that an expansion of this type will hold. We seem to get inevitably an extra term of order $O_p(n^{-2/3})$ in (b), which does not fit into this framework. On the other hand, in the expansion of our score function $\psi_n$, we get that this function is in first order the sum of a term of the form
 	$$
 	\psi'(\b_0)(\b-\b_0)
 	$$
where $\psi'$ is the matrix, representing the total derivative of the population equivalent score function $\psi$, and a term $W_n$ of order $O_p(n^{-1/2})$, which gives:
 	$$
 	\hat\b_n-\b_0\sim -\psi'(\b_0)^{-1}W_n=O_p(n^{-1/2}),
 	$$
and here extra terms of order $O_p(n^{-2/3})$ do not hurt. The technical details are elaborated in the proofs of our main result given in the supplemental article \cite{GroeneboomHendrickx-supplement}.
 \end{enumerate}

Before we formulate our estimates, we first describe in Section \ref{section:MLE} the behavior of the MLE $\hat F_{n,\b}$ for fixed $\b$. Throughout the paper, we illustrate our estimates by a simple simulated data example; we consider the model $Y_i= 0.5 X_i+\e_i$, where the $X_i$ and $T_i$ are independent Uniform$(0,2)$ and where the $\e_i$ are independent random variables with density $f(u)=384(u-0.375)(0.625-u)1_{[0.375,0.625]}(u)$ and independent of the $X_i$ and $T_i$. Note that the expectation of the random error $E(\e) = 0.5$, our linear model contains an intercept, $\E(Y_i| X_i= x_i) = 0.5 + 0.5 x_i$.

\begin{remark}
{\rm
We chose the present model as a simple example of a model for which the (generalized) Fisher information is finite. This Fisher information  easily gets infinite. For example, if $F_0$ is the uniform distribution on $[0,1]$ and $X$ and $T$ (independently) also have uniform distributions on $[0,1]$ and $\b=1/2$, the Fisher information for estimating $\b$ is given by:
\begin{align*}
\int_{u=0}^{1/2} \frac{(x-1/2)^2}{u(1-u)}\,dx\,du+\int_{u=1/2}^1 \frac{\{x-(1-u)\}^2}{u(1-u)}\,dx\,du=\infty.
\end{align*}
We observed in simulations with the uniform distribution that $n$ times the variance of our estimates (using $\ee = 0$) steadily decreases with increasing sample size $n$, suggesting a faster than $\sqrt n$-convergence for the estimate in this model. The theoretical framework for estimation of models with infinite Fisher information falls beyond the scope of this paper.
So we chose a model where the ratio $f_0(x)^2/\{F_0(x)\{1-F_0(x)\}$ stays bounded near the boundary of its support by taking a rescaled version of the density $6x(1-x)1_{[0,1]}(x)$ for $f_0$. Note that, if the Fisher information is infinite, our theory still makes sense for the truncated version:
\begin{align*}
&\int_{F_0(u)\in[\ee,1-\ee]}\int_{x=0}^1 \frac{(x-\E\{X|T-X/2=u\})^2f_0(u)^2}{F_0(u)\{1-F_0(u)\}}f_{X|T-X/2}(x|u)f_{T-X/2}(u)\,dx\,du,
\end{align*}
corresponding to our truncation of the log likelihood and the score function in the sequel. For completeness we included the derivation of the Fisher information in the Appendix. These calculations provide more insight in the information loss when one moves from a parametric model where $F_0$ is known to our semi-parametric model with unknown $F_0$.
}
\end{remark}

\section{Behavior of the maximum likelihood estimator}
\label{section:MLE}
For fixed $\b$, the MLE $\hat F_{n, \b}$ of $l_n(\b,F)$ is a piecewise constant function with jumps at a subset of $\{T_i - \b'X_i: i= 1, \ldots, n\}$. Once we have fixed the parameter $\b$, the order statistics on which the MLE is based are the order statistics of the values $U_1^{(\b)}=T_1-\b' X_1,\dots,U_n^{(\b)}=T_n-\b' X_n$ and the values of the corresponding $\dd_i^{(\b)}$. The MLE can be characterized as the left derivative of the convex minorant of a cumulative sum diagram consisting of the points $(0,0)$ and
\begin{align}
\label{cusum}
 \left(i, \sum_{j=1}^{i}\dd_{(j)}^{(\b)} \right) \qquad i= 1,\ldots, n,
\end{align}
where $\dd_{(i)}^{(\b)}$ corresponds to the $i$th order statistic of the $T_i - \b'X_i$ (see e.g. Proposition 1.2 in \cite{GrWe:92} on p.\, 41). We have:
\begin{align*}
&\P\left\{\dd_i^{(\b)}=1\bigm|U_i^{(\b)}=u\right\}=\int F_0(u+(\b-\b_0)'x)f_{X|T-\b'X}(x|T-\b'X=u)\,dx.
\end{align*}
Hence, defining
\begin{align}
\label{def_F_beta}
F_{\b}(u)=\int F_0(u+(\b-\b_0)'x)f_{X|T-\b'X}(x|T-\b'X=u)\,dx,
\end{align}
we can consider the $\dd_i^{(\b)}$ as coming from a sample in the ordinary current status model, where the observations are of the form $\bigl(U_i^{(\b)},\dd_i^{(\b)}\bigr)$, and where the observation times have density $f_{T-\b' X}$ and where $\dd_i^{(\b)}=1$ with probability $F_{\b}(U_i^{(\b)})$ at observation $U_i^{(\b)}$.

\begin{remark}	{\rm
		Assume that  $T$ and $X$ are continuous random variable, then we can write
		\begin{align*}
		F_{\b}'(u)&=\int f_0(u+(\b-\b_0)'x)f_{X|T-\b'X}(x|u)\,dx\\
		&\qquad\qquad+\int F_0(u+(\b-\b_0)'x)\frac{\partial}{\partial u}f_{X|T-\b'X}(x|u)\,dx.
		\end{align*}
		Integration by parts on the second term yields
		\begin{align*}
		&\int F_0(u+(\b-\b_0)'x)\frac{\partial}{\partial u}f_{X|T-\b'X}(x|u)\,dx\\
		&\qquad\qquad\qquad =-(\b-\b_0)'\int f_0(u+(\b-\b_0)'x)\frac{\partial}{\partial u}F_{X|T-\b'X}(x|u)\,dx.
		\end{align*}
		This implies
		\begin{align*}
		F_{\b}'(u)	&=\int f_0(u+(\b-\b_0)'x)\Bigl\{f_{X|T-\b'X}(x|u)\\
		&\qquad\qquad\qquad\qquad\qquad\qquad-(\b-\b_0)'\frac{\partial}{\partial u}F_{X|T-\b'X}(x|u)\Bigr\}\,dx.
		\end{align*}
		Assuming that $u\mapsto f_{X|T-\b'X}(x|u)$ stays away from zero on the support of $f_0$, this implies by a continuity argument that $F_{\b}$ is monotone increasing on the support of $F'_{\b}$ for $\b$ close to $\b_0$.
		
		Also note that we get from the fact that $F_0$ is a distribution function with compact support:
		$$
		\lim_{u\to-\infty}F_{\b}(u)=0\qquad\text{ and }\qquad \lim_{u\to\infty}F_{\b}(u)=1.
		$$
		So we may assume that $F_{\b}$ is a distribution function for $\b$ close to $\b_0$. If $X$ is discrete, a similar argument can be used to show that $F_{\b}$ is a distribution function for $\b$ close to $\b_0$ under the assumption that $u\mapsto P(X=x|T-\b'X=u)$ stays away from zero on the support of $f_0$.
	}
\end{remark}

A picture of the MLE $\hat F_{n,\b}$, based on the values $T_i-\b X_i$, and the corresponding function $F_{\b}$ for the model used in our simulation experiment, is shown in Figure \ref{fig:F_{beta}} and compared with $F_0$. Note that $F_{\b}$ involves both a location shift and a change in shape of $F_0$.

\begin{figure}[!ht]
	
	\begin{subfigure}[b]{0.43\textwidth}
		\centering
		\includegraphics[width=\textwidth]{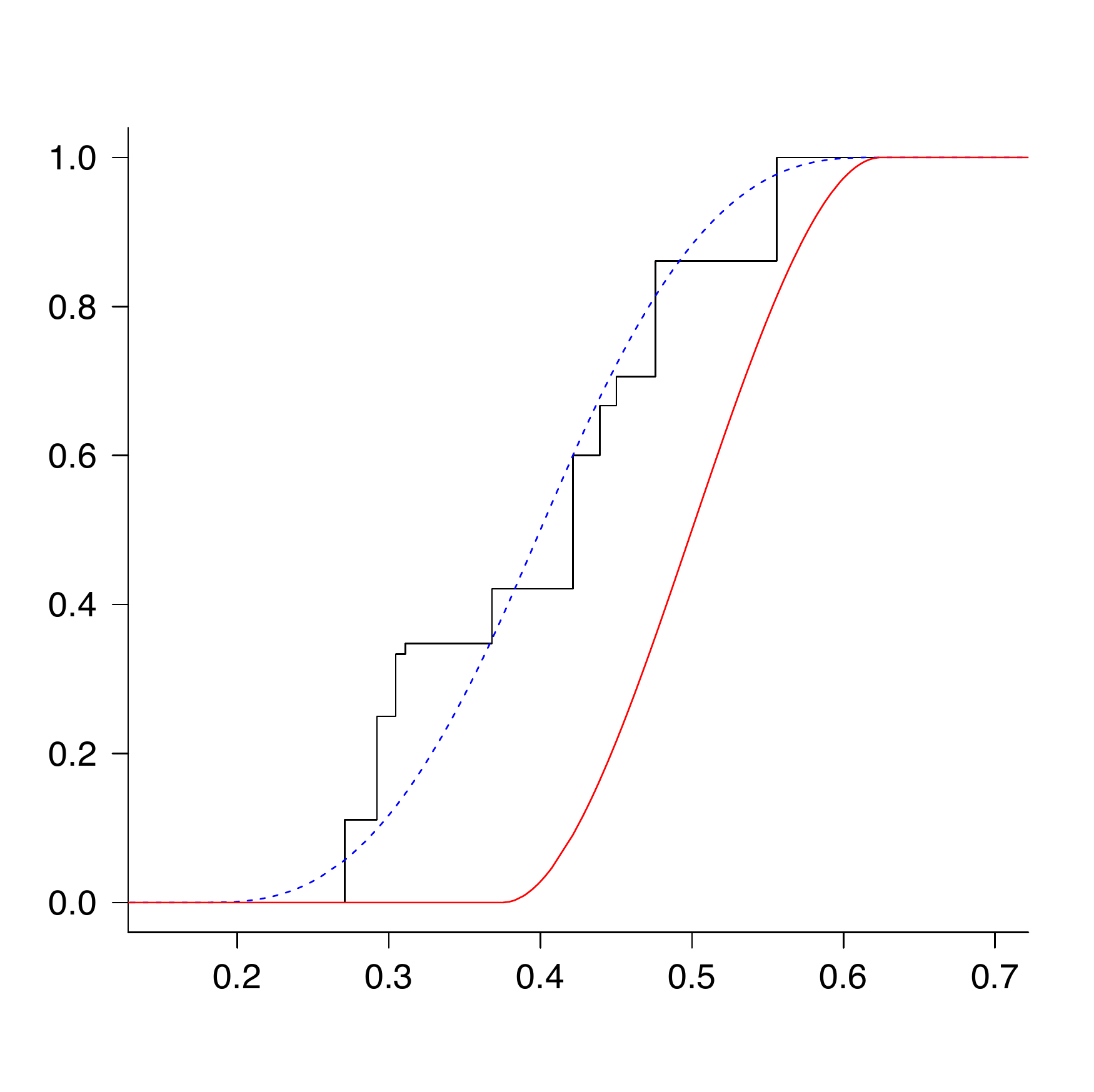}
		\caption{}
	\end{subfigure}
	\hspace{0.5cm}
	\begin{subfigure}[b]{0.43\textwidth}
		\includegraphics[width=\textwidth]{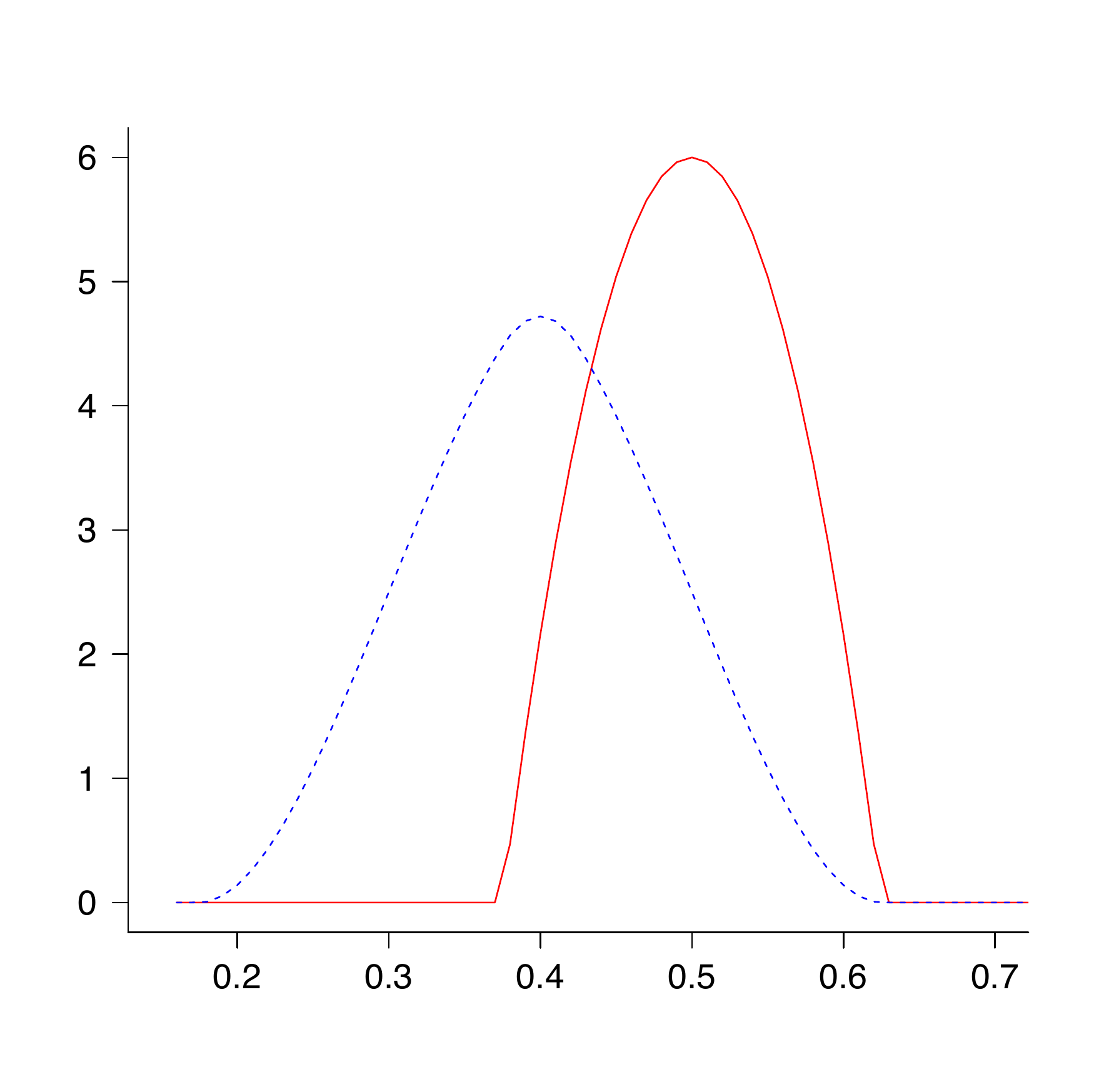}
		\caption{}
	\end{subfigure}
	\caption{ The real $F_0$ (red, solid), the function $F_{\beta}$ for $\b=0.6$ (blue, dashed) and the MLE $\hat F_{n,\b}$ (step function),  for a sample of size $n=1000$. (b) The real $f_0$ (red, solid) and the function $F_\b'$ for $\b=0.6$ (blue, dashed). }
	\label{fig:F_{beta}}
\end{figure}

For fixed $\b$ in a neighborhood of $\b_0$ we can now use standard theory for the MLE from current status theory. The following assumptions are used.

\begin{itemize}
\item[(A1)]  The parameter $\b_0=(\b_{0,1},\dots,\b_{0,k})\in\R^k$ is an interior point of $\Theta$ and the parameter space $\Theta$ is a compact convex set.
\item [(A2)]  $F_{\b}$ has a strictly positive continuous derivative, which stays away from zero on $A_{\ee',\b}\stackrel{\text{\rm def}}=\{u:F_{\b}(u)\in[\ee',1-\ee']\}$ for all $\b \in \Theta$, where $\ee'\in(0,\ee)$.
\item[(A3)] The density $u\mapsto f_{T-\b'X}(u)$ is continuous and also staying away from zero on $A_{\ee',\b}$ for all $\b \in \Theta$, where $A_{\ee',\b}$ is defined as in (A2).
\end{itemize}

\begin{remark}
{\rm Note that the truncation is for the interval $[\ee,1-\ee]$, but that we need conditions (A2) and (A3) to be satisfied for the slightly bigger interval $[\ee',1-\ee']$.
}
\end{remark}

\begin{lemma}
	\label{lemma:MLE_misspecified}
	If Assumptions (A1), (A2) and (A3) hold, then:
	\begin{enumerate}
\item[(i)]
\begin{align*}
\sup_{\b\in\Theta}\int\left\{\hat F_{n,\b}(t-\b'x)-F_{\b}(t-\b'x)\right\}^2\,dG(t,x)=O_p\left(n^{-2/3}\right).		
\end{align*}
\item[(ii)] $$
\P\left(\lim_{n\to \infty} \sup_{\b \in \Theta,\,u \in A_{\ee',\b}} |\hat F_{n,\b}(u)- F_\b(u)| =0 \right)=1,
$$
where $\ee'$ is chosen as in condition (A2).
\end{enumerate}
\end{lemma}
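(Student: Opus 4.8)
The plan is to reduce to standard current status theory via the observation recorded just above the statement: for each fixed $\b$ the pairs $\bigl(U_i^{(\b)},\dd_i^{(\b)}\bigr)$, $U_i^{(\b)}=T_i-\b'X_i$, form a \emph{correctly specified} current status sample with hidden distribution function $F_\b$ and observation density $f_{T-\b'X}$, so that $\hat F_{n,\b}$ is the ordinary current status MLE in this reduced model. The only genuinely new ingredient is uniformity in $\b\in\Theta$.

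For part (i): after the substitution $u=t-\b'x$ the left-hand side equals $\int\bigl(\hat F_{n,\b}(u)-F_\b(u)\bigr)^2\,dG_{T-\b'X}(u)$, the squared $L_2$-error of the MLE in the reduced model. For a fixed $\b$ this is $O_p(n^{-2/3})$ by the standard argument: the basic inequality for the current status MLE together with the rate theorem for maximum likelihood estimators driven by the bracketing-entropy bound $\log N_{[\,]}(\tau,\F,L_2)\lesssim 1/\tau$ for the class $\F$ of distribution functions yields the Hellinger rate $h\bigl(p_{\hat F_{n,\b}},p_{F_\b}\bigr)=O_p(n^{-1/3})$, and $h^{2}\ge\tfrac12\int\bigl(\hat F_{n,\b}-F_\b\bigr)^2\,dG_{T-\b'X}$ since $(\sqrt a-\sqrt b)^2\ge\tfrac14(a-b)^2$ on $[0,1]$. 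To obtain the supremum over $\b$ I would rerun this argument with $\b$ absorbed into the index set, replacing $\F$ by $\F_\Theta=\{(t,x,\d)\mapsto F(t-\b'x):F\in\F,\ \b\in\Theta\}$; the point to verify is that composing the single monotone coordinate with the $k$-dimensional VC family of half-spaces $\{(t,x):t-\b'x\le c\}$ does not destroy the entropy bound, so that still $\log N_{[\,]}(\tau,\F_\Theta,L_2(P_0))\lesssim 1/\tau$, and the usual peeling / modulus-of-continuity argument goes through with suprema over $\b$ throughout.

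For part (ii): I would first upgrade the $L_2$ statement to hold almost surely and uniformly in $\b$. Since $\F_\Theta$ has finite uniform entropy it is $P_0$-Glivenko--Cantelli, and a Wald-type consistency argument for the maximizer $\hat F_{n,\b}$ --- the population log-likelihood for fixed $\b$ being uniquely maximized over $\F$ at $F_\b$ on the support of $f_{T-\b'X}$, with separation uniform in $\b$ by continuity and compactness of $\Theta$ --- gives $\sup_{\b\in\Theta}\int\{\hat F_{n,\b}(t-\b'x)-F_\b(t-\b'x)\}^2\,dG(t,x)\to0$ a.s. Next I would convert this into uniform convergence on $A_{\ee',\b}$ by a P\'olya-type sandwiching using monotonicity of $\hat F_{n,\b}$ and Assumptions (A2)--(A3). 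By (A2)--(A3), continuity and compactness there exist $\ee''\in(0,\ee')$ and constants $c,\bar M>0$ with $f_{T-\b'X}\ge c$ and $F_\b'\le\bar M$ on $A_{\ee'',\b}\supset A_{\ee',\b}$ for all $\b$, and $F_\b'\le\bar M$ also forces a uniform gap between $A_{\ee',\b}$ and $A_{\ee'',\b}$, so that $[u-\eta,u+\eta]\subset A_{\ee'',\b}$ whenever $u\in A_{\ee',\b}$ and $\eta$ is small. For such $u$ and $\eta$, monotonicity gives
\[
\hat F_{n,\b}(u)\ \le\ \frac1\eta\int_u^{u+\eta}\hat F_{n,\b}(v)\,dv\ \le\ F_\b(u)+\bar M\eta+\frac1\eta\int_u^{u+\eta}\bigl|\hat F_{n,\b}(v)-F_\b(v)\bigr|\,dv ,
\]
and by Cauchy--Schwarz the last term is at most $\eta^{-1/2}c^{-1/2}\bigl(\int\{\hat F_{n,\b}(t-\b'x)-F_\b(t-\b'x)\}^2\,dG(t,x)\bigr)^{1/2}$, which tends to $0$ a.s.\ uniformly in $\b$; a symmetric argument with $[u-\eta,u]$ gives the matching lower bound. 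Choosing $\eta=\rho/(2\bar M)$ and then $n$ large makes $\sup_{\b\in\Theta,\,u\in A_{\ee',\b}}|\hat F_{n,\b}(u)-F_\b(u)|\le\rho$ for all large $n$, a.s., which is the claim.

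The main obstacle in both parts is precisely the uniformity in $\b$: establishing the uniform bracketing-entropy bound for $\F_\Theta$ so that the single-$\b$ empirical-process and Glivenko--Cantelli arguments survive the supremum over $\b$, and controlling the $\b$-dependent endpoints of $A_{\ee',\b}$ in the sandwiching step, which is why (A2)--(A3) have to be read, via continuity and compactness, on the slightly larger sets $A_{\ee'',\b}$. Everything else is standard current status material.
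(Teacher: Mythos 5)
Your proposal follows essentially the same route as the paper's: part (i) rests on the bracketing-entropy bound for the jointly indexed class $\{(t,x)\mapsto F(t-\b'x): F\in{\cal F}_0,\ \b\in\Theta\}$ (the paper's Lemma~\ref{lemma:bracketing-entropy}), the basic inequality plus a peeling argument yielding the uniform Hellinger rate $O_p(n^{-1/3})$, and the elementary conversion from Hellinger to $L_2$ distance on the distribution functions. For part (ii) the paper only asserts that the claim follows from (i) by continuity and monotonicity; your sandwiching argument is exactly the intended completion, and your additional step of first upgrading the $L_2$ convergence to an almost-sure statement (which the in-probability rate of part (i) does not by itself supply) correctly addresses a point the paper leaves implicit.
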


\begin{proof}
Part (i) is proved in the supplementary material. Using the continuity and monotonicity of $F_\b$ the second result follows from (i).
\end{proof}

We first show in Section \ref{subsection:MLE} that it is possible to construct $\sqrt{n}$-consistent estimates of $\b_0$ derived from a score function $\psi_n^{(\ee)}(\b,\hat F_{n,\b})$ without requiring any smoothing in the estimation process. In Section \ref{subsection:MLE2} we look at $\sqrt{n}$-consistent and efficient score-estimates based on the MLE $\hat F_{n,\b}$, using a weight function $\phi$ that incorporates the estimate $\int K_h(u-y)\,d\hat F_{n,\b}(y)$ of the density $f_0(u)=F_0'(u)$. An efficient estimate of $\b_0$ derived by a score function based on kernel estimates for the distribution function, is considered in Section \ref{subsection:plugin}. The latter estimate does not involve the behavior of the MLE $\hat F_{n,\b}$.
\section{$\sqrt{n}$-consistent estimation of the regression parameter}
\label{section:estimates}

\subsection{A simple estimate based on the MLE $\hat F_{n,\b}$, avoiding any smoothing}
\label{subsection:MLE}
We consider the function $\psi_{1,n}^{(\ee)}$, defined by:
\begin{align}
\label{score1}
\psi_{1,n}^{(\ee)}(\b)&\stackrel{\mbox{\rm\small def}}{=}\int_{\hat F_{n,\b}(t-\b'x)\in[\ee,1-\ee]} x\bigl\{\d - \hat F_{n,\b}(t-\b'x)\bigr\}\,d\P_n(t,x,\d),
\end{align}
where $\hat F_{n,\b}$ is the MLE based on the order statistics of the values $T_i-\b' X_i, i = 1,\ldots,n$. The function $\psi_{1,n}^{(\ee)}$ has finitely many different values, and we look for a ``crossing of zero'' to define our estimate. For simplicity we first treat the one-dimensional case.
\begin{figure}[!ht]
	\begin{subfigure}[b]{0.43\textwidth}
		\centering
		\includegraphics[width=\textwidth]{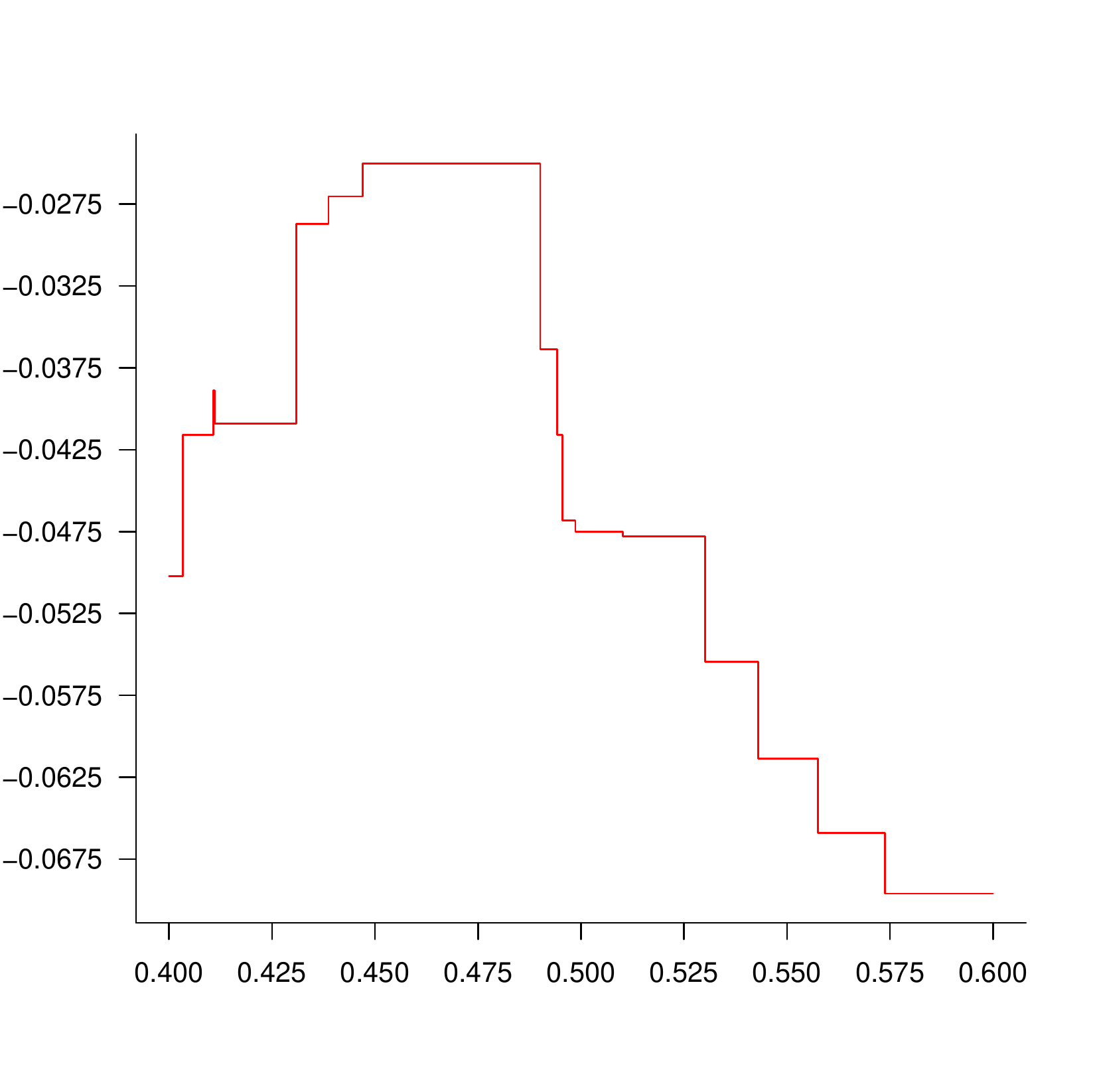}
	\end{subfigure}
	\hspace{0.5cm}
	\begin{subfigure}[b]{0.43\textwidth}
		\includegraphics[width=\textwidth]{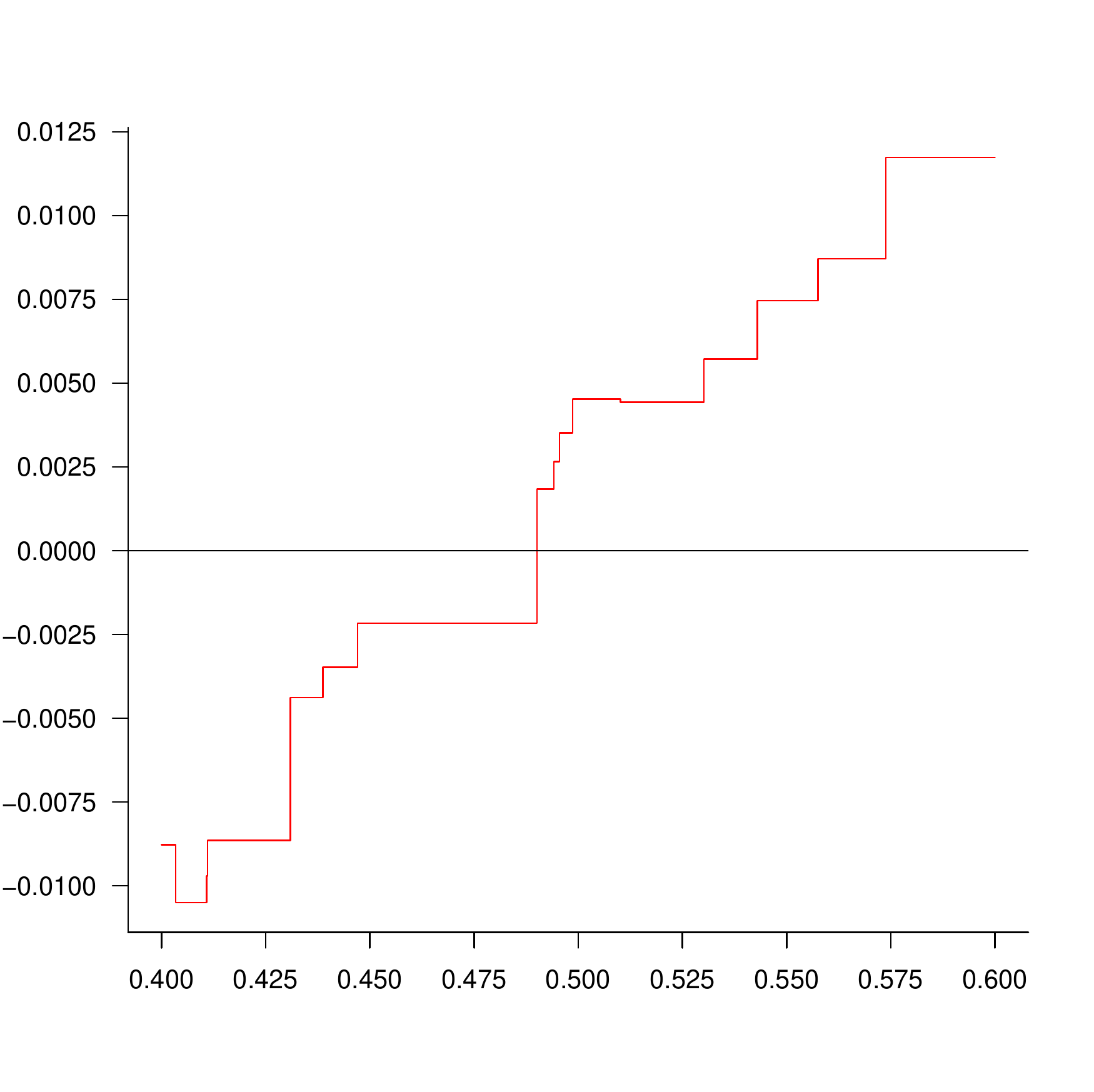}
	\end{subfigure}
	\caption{The truncated profile log likelihood $l_n^{(\ee)}$ for the MLE $\hat F_{n,\b}$ (left panel) and the score function $\psi_{1,n}^{(\ee)}$ (right panel) as a function of $\b$ for a sample of size $n=100$ and $\ee=0.001$.}
	\label{fig:MLE1}
\end{figure}

Figure \ref{fig:MLE1} gives a picture of the function $\psi_{1,n}^{(\ee)}$ as a function of $\b$, drawn as a right-continuous piecewise constant function on a grid of 100 points. Note that this function can have at most $n!$ different values, for all permutations of the numbers $1,\dots,n$.
We would like to define the estimate $\hat \b_n$ by:
\begin{align}
\label{estimator1a}
\psi_{1,n}^{(\ee)}(\hat \b_n) = 0,
\end{align}
but it is clear that we cannot hope to achieve that due to the discontinuous nature of the score function $\psi_{1,n}^{(\ee)}$.  
We therefore introduce the following definition:
\begin{definition}[zero-crossing]
\label{zero-crossing}
We say that $\b_*$ is a crossing of zero of a function $C: \Theta \to \R: \b \mapsto C(\b)$ if each open neighborhood of $\b_*$ contains points $\b_{1}, \b_{2} \in \Theta$ such that $ C(\b_{1})C(\b_{2}) \le 0$. 

We say that a function $\tilde C: \Theta \to \R^d: \b \mapsto \tilde C(\b) = (\tilde C_1(\b),\ldots \tilde C_d(\b))'$ has a crossing of zero at a point $\b_*$ if $\b_*$ is a crossing of zero of each component $ \tilde C_j: \Theta \to \R, j=1\ldots,d$.

\end{definition} 

We define our estimator $\hat \b_n$ as a crossing of zero  of $\psi_{1,n}^{(\ee)}$. Figure \ref{fig:MLE1} shows a crossing of zero at a point $\beta$ close to $\beta_0=0.5$. If the number of dimensions $d$ exceeds one, then a crossing of zero can be thought of as a point $\b_* \in \Theta$ such that each component of the score function $\psi_{1,n}^{(\ee)}$ passes through zero in $\b = \b_*$. Before we state the asymptotic result of our estimator in Theorem \ref{theorem:method1}, we give in Lemma \ref{lemma:population_score} below some interesting properties of the population version of the score function.

\begin{lemma}
	\label{lemma:population_score}
	Let $\psi_{1,\ee}$ be defined by,
	\begin{align}
	\label{score_population}
	\psi_{1,\ee}(\b) &= \int_{F_\b(t-\b'x) \in [\ee,1-\ee]} x \{\d - F_\b(t-\b'x)\}\,dP_0(t,x,\d),
	\end{align}
	 and define
	$$\E_{\ee,\b}(w(T,X,\dd)) = \E\left(\,1_{\{F_\b(t-\b'x) \in[\ee,1-\ee]\}} w(T,X,\dd)\right),$$
	 for functions $w$, then $\psi_{1,\ee}(\b_0) = 0$ and for each $\b \in \Theta$ we have
	\begin{align*}
	&(i) \qquad \psi_{1,\ee}(\b) = \E_{\ee,\b}\left[ \text{\rm Cov}\left( \dd, X | T-\b'X \right)\right] \\ 
	& (ii) \qquad (\b -\b_0)'\E_{\ee,\b}\left[\text{\rm Cov}\left( \dd, X | T-\b'X \right) \right]\ge 0 \text{ for all } \b \in \Theta, 
	\end{align*}
	and  $\b_0$ is the only value such that (ii) holds.
	The derivative of $\psi_{1,\ee}$ at $\b = \b_0$ is given by,
	\begin{align}
	&\psi_{1,\ee}'(\b_0) = \E_{\ee,\b_0}\left[f_0(T-\b_0'X) \text{\rm Cov}(X| T-\b_0'X)\right]. \label{derivative_score}
	\end{align}
\end{lemma}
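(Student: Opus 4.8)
\textbf{Proof plan for Lemma \ref{lemma:population_score}.}

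The plan is to establish the five assertions in sequence, exploiting the fact that conditioning on $T-\b'X$ turns the integrand into a conditional covariance. First I would treat the identity $\psi_{1,\ee}(\b_0)=0$: since $\E(\d\mid T,X)=F_0(T-\b_0'X)$, the integrand $x\{\d-F_0(t-\b_0'x)\}$ has conditional mean zero given $(T,X)$, so the whole integral vanishes; note the truncation region $\{F_0(t-\b_0'x)\in[\ee,1-\ee]\}$ is $(T,X)$-measurable, so it does not disturb this. For part (i), I would condition on $T-\b'X=u$ inside the truncated integral. Writing $m_\b(u)=\E(X\mid T-\b'X=u)$, one has $\E(\d\mid T-\b'X=u)=F_\b(u)$ by the very definition \eqref{def_F_beta}, and hence
\begin{align*}
\E\bigl[x\{\d-F_\b(u)\}\mid T-\b'X=u\bigr]
=\E(\d X\mid T-\b'X=u)-F_\b(u)\,m_\b(u)
=\text{\rm Cov}(\d,X\mid T-\b'X=u),
\end{align*}
because $\E(\d\,F_\b(u)X\mid u)=F_\b(u)\,m_\b(u)$ as $F_\b(u)$ is a function of $u$. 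Taking the outer expectation over the truncated region gives exactly $\E_{\ee,\b}[\text{\rm Cov}(\d,X\mid T-\b'X)]$, which is (i).

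For (ii) I would use the representation (i) together with the structure of $\d=1_{\{\e\le T-\b_0'X\}}$ and the independence of $\e$ from $(T,X)$. Fixing $u=t-\b'x$ and conditioning further, the conditional covariance $\text{\rm Cov}(\d,X\mid T-\b'X=u)$ can be written, after conditioning on $X=x$ within the slice, in terms of $\text{\rm Cov}\bigl(F_0(u+(\b-\b_0)'x),X\mid T-\b'X=u\bigr)$; since $s\mapsto F_0(u+(\b-\b_0)'s)$ is monotone in the direction $\b-\b_0$ (nondecreasing in $(\b-\b_0)'s$), the scalar $(\b-\b_0)'X$ and the random variable $F_0(u+(\b-\b_0)'X)$ are comonotone functions of $X$ given $T-\b'X=u$, so their covariance is nonnegative; this yields $(\b-\b_0)'\text{\rm Cov}(\d,X\mid T-\b'X=u)\ge0$ pointwise, and integrating over the truncation region gives (ii). Strict positivity unless $\b=\b_0$ follows from Assumptions (A2)--(A3): on the truncation region $F_0'>0$ on a set of positive measure and, by the conditional-density-bounded-away-from-zero hypothesis used in the second Remark, $X$ has genuine conditional spread in the direction $\b-\b_0$ unless that direction is null, so the covariance is strictly positive somewhere on the (positive-measure) truncation region whenever $\b\ne\b_0$; hence $\b_0$ is the unique minimizer-type point. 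I would also remark here that (ii) together with $\psi_{1,\ee}(\b_0)=0$ is precisely the monotonicity/identifiability property that later drives the zero-crossing argument in Theorem \ref{theorem:method1}.

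Finally, for the derivative \eqref{derivative_score} I would differentiate the representation in (i) under the integral sign at $\b=\b_0$. At $\b_0$ the integrand $\text{\rm Cov}(\d,X\mid T-\b'X=u)$ vanishes (by the $\b_0$ case of (i)), so only the term where the differentiation hits $F_\b$ survives; using $\frac{\partial}{\partial\b}F_\b(t-\b'x)\big|_{\b_0}$ and the chain rule, the leading contribution is $\int_{F_0\in[\ee,1-\ee]} f_0(t-\b_0'x)\,(x-m_{\b_0}(t-\b_0'x))(x-m_{\b_0}(t-\b_0'x))'\,dP_0$, which is exactly $\E_{\ee,\b_0}[f_0(T-\b_0'X)\,\text{\rm Cov}(X\mid T-\b_0'X)]$; the boundary terms coming from differentiating the truncation region are zero because the integrand already vanishes on $\{F_0=\ee\}\cup\{F_0=1-\ee\}$ at $\b_0$. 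The main obstacle I anticipate is making the interchange of differentiation and integration rigorous and handling the moving truncation region $\{F_\b(t-\b'x)\in[\ee,1-\ee]\}$ cleanly: this needs the regularity in (A1)--(A3) (continuity and positivity of $F_\b'$ and of $f_{T-\b'X}$ near $\b_0$, plus compactness of $\Theta$) to justify dominated-convergence/Leibniz arguments, and a careful check that the contribution of the boundary of the truncation set is negligible — everything else reduces to the conditional-expectation bookkeeping above.
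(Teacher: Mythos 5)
Your plan is correct and follows essentially the same route as the paper's own proof: conditioning on $T-\b'X$ to get the covariance representation in (i), the comonotonicity argument (which the paper makes precise via Hoeffding's covariance formula and Fubini) for the sign in (ii), and differentiation under the integral for \eqref{derivative_score} with the observation that the boundary terms from the moving truncation region vanish at $\b=\b_0$ because the integrand does. The only step you elide is the passage from strict positivity of $(\b-\b_0)'\psi_{1,\ee}(\b)$ for $\b\ne\b_0$ to the claim that $\b_0$ is the \emph{only} point satisfying (ii); the paper closes this with a short convexity argument — if some $\b_1\ne\b_0$ also satisfied (ii), evaluating both inequalities at the midpoint $\tilde\b=\tfrac12(\b_0+\b_1)$ gives $(\tilde\b-\b_0)'\psi_{1,\ee}(\tilde\b)=-(\tilde\b-\b_1)'\psi_{1,\ee}(\tilde\b)$ with both sides forced to be nonnegative, contradicting strict positivity at $\tilde\b$ — and this one line should be added to your argument.
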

The proof of Lemma \ref{lemma:population_score} is given in the supplementary material. An illustration of the second result (ii) is given in Figure \ref{fig:cov-condition}, this property is used in the proof of consistency of our estimator $\hat \b_n$ given in the supplementary material, as the first part of Theorem \ref{theorem:method1}.

\begin{figure}[!ht]
	\includegraphics[width=0.5\textwidth]{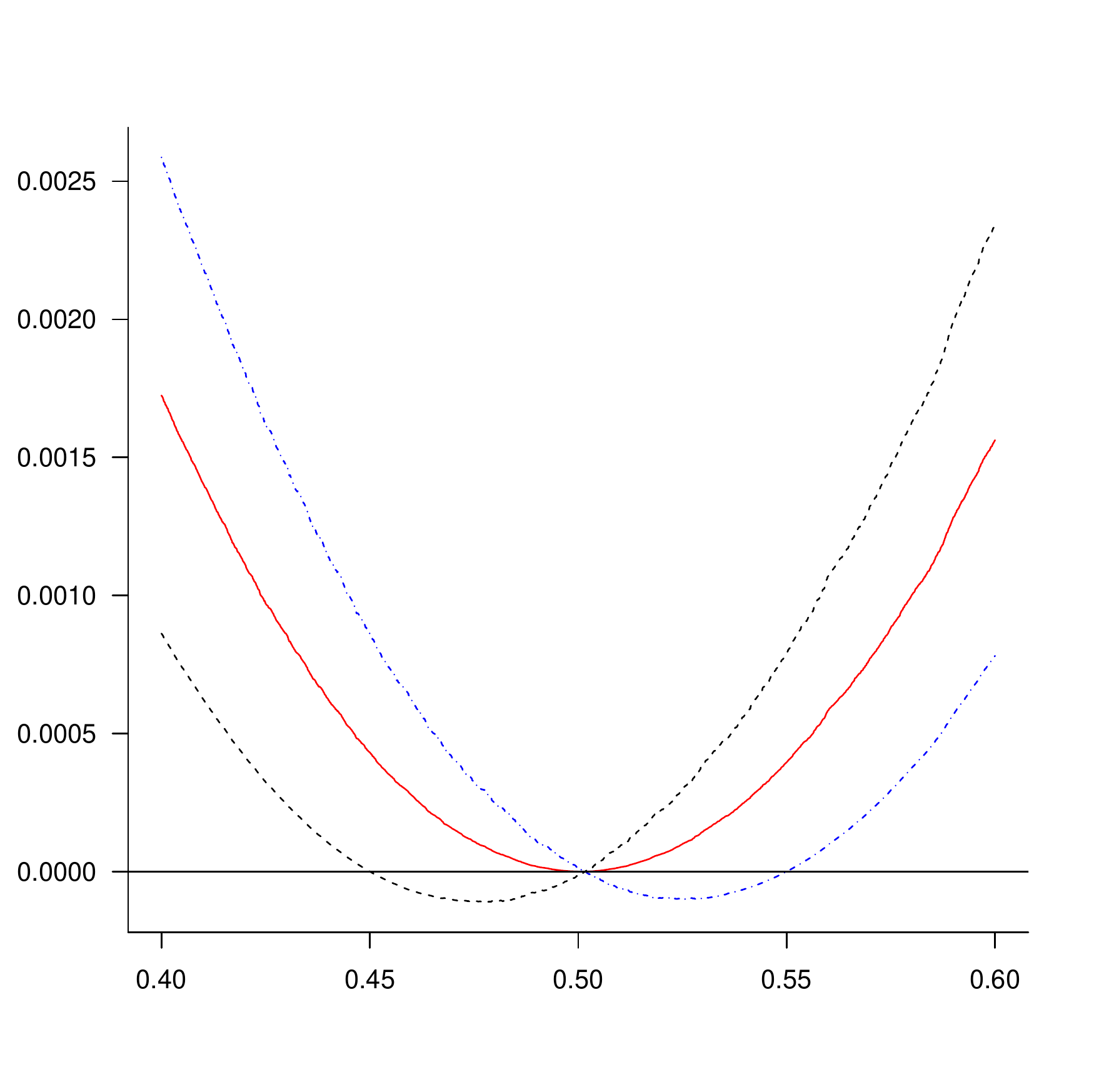}
	\caption{The function $\b \mapsto (\b -\b_*)' \int_{ F_{\b}(t-\b'x)\in[\ee,1-\ee]} x\bigl\{\d-F_{\b}(t-\b'x)\bigr\}\,dP_0(t,x,\d)$ as a function of $\b$, with $\b_* = 0.45$ (black,dashed), $\b_* = \b_0 = 0.50$ (red,solid) and $\b_* = 0.55$ (blue,dashed-dotted) for $\ee=0.001$.}
	\label{fig:cov-condition}
\end{figure}

The following assumptions are also needed for the asymptotic normality results of our estimators.

\begin{itemize}
	\item[(A4)]  The function $F_\b$ is twice continuously differentiable on the interior of the support $S_\b$ of $f_\b =F'_\b$ for $\b \in \Theta$.
	\item[(A5)] The density $f_{T-\b'X}(u)$ of $T-\b'X$ and the conditional expectations $\E\{X|T-\b'X=u\}$ and $\E\{XX'|T-\b'X=u\}$ are twice continuously differentiable functions w.r.t.\ $u$, except possibly at a finite number of points. The functions $\b\mapsto f_{T-\b'X}(v)$, $\b\mapsto \E\{X|T-\b'X=v\}$ and $\b\mapsto \E\{XX'|T-\b'X=v\}$ are continuous functions, for $v$ in the definition domain of the functions and for $\b\in \Theta$. The density of $(T,X)$ has compact support.
\end{itemize}

\begin{theorem}
	\label{theorem:method1}
	Let Assumptions (A1)-(A5) be satisfied and suppose that the covariance $\text{\rm Cov}(X,F_0(u+(\b-\b_0)'X)|T-\b'X=u)$ is not identically zero for $u$ in the region $A_{\ee,\b}$, for each $\b\in \Theta$. Moreover, let $\hat \b_n$ be defined by a crossing of zero of $\psi_{1,n}^{(\ee)}$.
	Then:
	\begin{enumerate}
	\item[(i)]
	\text{\rm[Existence of a root]} For all large $n$, a crossing of zero $\hat\b_n$ of $\psi_{1,n}^{(\ee)}$ exists with probability tending to one.
	\item[(ii)]
	\text{\rm[Consistency]}
	\begin{align*}
	\hat\b_n \stackrel{p}{\rightarrow}\b_0,\qquad n\to\infty.
	\end{align*}
	\item[(iii)]
	\text{\rm[Asymptotic normality]}	
	$\sqrt{n}\bigl\{\hat\b_n-\b_0\bigr\}$ is asymptotically normal with mean zero and variance $A^{-1}BA^{-1}$,  where
	\begin{align*}
	&A=\E_{\ee}\Bigl[ f_0(T- \b_0'X)\,\text{\rm Cov}(X|T-\b_0'X)\Bigr],
	\end{align*}
	and
	\begin{align*}
	B&=\E_{\ee}\Bigl[ F_0(T- \b_0'X)\{1- F_0(T- \b_0'X)\}\text{\rm Cov}(X|T-\b_0'X)\Bigr],
	\end{align*}
	defining $ \E_\ee(w(T,X,\dd)) = \E\,1_{\{F_0(t-\b_0'x) \in[\ee,1-\ee]\}} w(T,X,\dd)$ for functions $w$ and assuming that $A$ is non-singular.
	\end{enumerate}
\end{theorem}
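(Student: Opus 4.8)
The plan is to prove the three parts in order, using Lemma \ref{lemma:population_score} for the population side and Lemma \ref{lemma:MLE_misspecified} to control the MLE $\hat F_{n,\b}$ uniformly in $\b$. For part (i), existence of a crossing of zero, I would work componentwise. Fix a coordinate $j$ and consider the $j$-th component of $\psi_{1,n}^{(\ee)}(\b)$ as $\b$ ranges over a small segment through $\b_0$ in the $j$-th coordinate direction. By Lemma \ref{lemma:population_score}(ii), the population score $\psi_{1,\ee}$ satisfies $(\b-\b_0)'\psi_{1,\ee}(\b)\ge 0$ with equality only at $\b_0$, so moving to either side of $\b_0$ along that segment the relevant component of $\psi_{1,\ee}$ has strictly opposite signs near $\b_0$ (here the nondegeneracy hypothesis on $\mathrm{Cov}(X,F_0(u+(\b-\b_0)'X)\mid T-\b'X=u)$ and the nonsingularity of $A=\psi_{1,\ee}'(\b_0)$ in \eqref{derivative_score} guarantee the sign change is genuine and of order $\|\b-\b_0\|$). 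Then I would show $\sup_{\b\in\Theta}\|\psi_{1,n}^{(\ee)}(\b)-\psi_{1,\ee}(\b)\|=o_p(1)$: the difference splits into an empirical-process term $\int x\{\d-F_\b\}(d\P_n-dP_0)$ controlled by a Donsker/Glivenko–Cantelli argument over the (compact, smooth) class indexed by $\b$, a term $\int x(\hat F_{n,\b}-F_\b)\,d\P_n$ bounded via Lemma \ref{lemma:MLE_misspecified}(i) and boundedness of $x$, and a term coming from the discrepancy between the random truncation set $\{\hat F_{n,\b}\in[\ee,1-\ee]\}$ and the population set $\{F_\b\in[\ee,1-\ee]\}$, controlled by Lemma \ref{lemma:MLE_misspecified}(ii) together with (A2)–(A3) which force $F_\b$ to cross the levels $\ee,1-\ee$ transversally. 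Uniform closeness plus the genuine population sign change then yields, with probability tending to one, points $\b_1,\b_2$ in every neighborhood of $\b_0$ with $\psi_{1,n,j}^{(\ee)}(\b_1)\psi_{1,n,j}^{(\ee)}(\b_2)\le 0$ for each $j$, i.e.\ a crossing of zero.

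For part (ii), consistency, the same uniform convergence $\psi_{1,n}^{(\ee)}\to\psi_{1,\ee}$ plus the strict separation property of Lemma \ref{lemma:population_score}(ii) gives a standard argmin/zero-set argument: any sequence $\hat\b_n$ of crossings of zero must satisfy $(\hat\b_n-\b_0)'\psi_{1,\ee}(\hat\b_n)\to 0$, and since $\b_0$ is the unique point where this quantity vanishes and $\Theta$ is compact, $\hat\b_n\stackrel{p}{\to}\b_0$.

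For part (iii), asymptotic normality, I would carry out a one-term expansion of the score equation around $\b_0$. Writing $\hat\b_n$ as a near-root, the decomposition is
\begin{align*}
0 \approx \psi_{1,n}^{(\ee)}(\hat\b_n) = \psi_{1,n}^{(\ee)}(\b_0) + \psi_{1,\ee}'(\b_0)(\hat\b_n-\b_0) + R_n,
\end{align*}
where $\psi_{1,n}^{(\ee)}(\b_0)=n^{-1/2}W_n+o_p(n^{-1/2})$ with $W_n$ the empirical process $\sqrt{n}\int x\{\d-F_0\}\,d(\P_n-P_0)$ restricted to the truncation set (asymptotically $N(0,B)$ by the CLT), and $R_n$ collects (a) the bias/variance contribution of replacing $F_\b$ by the MLE $\hat F_{n,\b}$ in a neighborhood of $\b_0$, (b) the stochastic equicontinuity remainder of the empirical process, and (c) the remainder from the random truncation boundary. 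The key point, flagged in reason (iii) of Section \ref{section:model}, is that each of these is $o_p(n^{-1/2})+o_p(\|\hat\b_n-\b_0\|)$ — in particular the $O_p(n^{-2/3})$-type term from $\hat F_{n,\b}$ is harmless here — so that, using $\psi_{1,\ee}'(\b_0)=A$ from \eqref{derivative_score} and invertibility of $A$,
\begin{align*}
\sqrt{n}(\hat\b_n-\b_0) = -A^{-1}W_n + o_p(1) \xrightarrow{d} N(0, A^{-1}BA^{-1}).
\end{align*}

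The main obstacle is controlling the term $R_n(\text{a})$, i.e.\ showing that plugging the non-smooth, $n^{1/3}$-rate MLE $\hat F_{n,\b}$ into the score does not spoil the $\sqrt{n}$-expansion. This requires more than the $L^2$ rate in Lemma \ref{lemma:MLE_misspecified}(i): one needs that the linear functional $\b\mapsto\int x(\hat F_{n,\b}-F_\b)\,d\P_n$, and especially its behavior as $\b$ moves off $\b_0$, contributes only $o_p(n^{-1/2})$ after the empirical measure is recentered — typically via a characterization of $\hat F_{n,\b}$ through its convex-minorant (switching-relation) representation, a careful bound on $\int(\hat F_{n,\b}-F_\b)^2\,dG$ combined with entropy bounds to upgrade to a $\P_n-P_0$ statement, and the fact that $x\{\d-\hat F_{n,\b}\}$ integrated against $d\P_n$ is, up to negligible error, the derivative of the concave criterion whose maximizer is $\hat F_{n,\b}$, killing the leading first-order term. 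Handling simultaneously the dependence of this MLE term on $\b$ in an $O_p(n^{-1/2})$-neighborhood of $\b_0$, together with the moving truncation set, is the technically delicate part; I would expect it to occupy the bulk of the supplementary proof.
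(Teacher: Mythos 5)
Your overall architecture (population score identity from Lemma \ref{lemma:population_score}, uniform convergence of $\psi_{1,n}^{(\ee)}$ to $\psi_{1,\ee}$, then a one-term expansion inverted by $A=\psi_{1,\ee}'(\b_0)$) matches the paper, and your consistency argument in (ii) is essentially the paper's subsequence argument. But there are two concrete gaps. First, in part (i) your componentwise construction gives, for each $j$, a sign change of the $j$-th component of $\psi_{1,n}^{(\ee)}$ somewhere on the segment $\{\b_0+te_j\}$, but Definition \ref{zero-crossing} requires a \emph{single} point $\b_*$ in whose every neighborhood \emph{all} components change sign; for $k>1$ the points you produce differ across components and nothing forces them to coincide. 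The paper resolves this by convolving the remainder in the expansion $\psi_{1,n}^{(\ee)}(\b)=\psi_{1,\ee}'(\b_0)(\b-\b_0)+R_n(\b)$ with a kernel, applying Brouwer's fixed point theorem to the resulting continuous map (after reparametrizing by $\g=\psi_{1,\ee}'(\b_0)\b$) to obtain an exact zero $\b_{nh}$ of the smoothed score, and then letting the smoothing bandwidth tend to zero along a subsequence to exhibit a common crossing of zero. Some device of this kind is needed; coordinatewise sign changes alone do not suffice.

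Second, in part (iii) the mechanism you invoke to kill the leading MLE term is not correct as stated: the stationarity (convex-minorant) characterization of $\hat F_{n,\b}$ annihilates $\int w(t-\b'x)\{\d-\hat F_{n,\b}(t-\b'x)\}\,d\P_n$ only for weights $w$ that are constant on the intervals of constancy of $\hat F_{n,\b}$, whereas your weight is $x$, which is not a function of $t-\b'x$ at all. The paper's proof first centers $x$ at $\f_{\b}(t-\b'x)=\E\{X\mid T-\b'X=t-\b'x\}$, then replaces $\f_{\b}$ by a piecewise-constant version $\bar\f_{\b,\hat F_{n,\b}}$ adapted to the jump points of the MLE, for which the integral against $\{\d-\hat F_{n,\b}\}\,d\P_n$ is exactly zero; the substitution error is controlled via $\|\f_{\b}-\bar\f_{\b,\hat F_{n,\b}}\|\lesssim|\hat F_{n,\b}-F_{\b}|$, Cauchy--Schwarz with the $n^{-1/3}$ rate of Lemma \ref{lemma:MLE_misspecified}, and the equicontinuity lemma over bracketing classes of uniformly bounded-variation functions. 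You correctly flag this as the delicate step and correctly note that the $O_p(n^{-2/3})$ terms are harmless in the score (as opposed to the argmax) formulation, but the centering-and-discretization device is the missing idea without which the expansion $\psi_{1,n}^{(\ee)}(\hat\b_n)=W_n+A(\hat\b_n-\b_0)+o_p(n^{-1/2}+\|\hat\b_n-\b_0\|)$ cannot be established.
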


\begin{remark}
{\rm Note that $\text{\rm Cov}(X,F_0(u+(\b-\b_0)'X)|T-\b'X=u)$ is not identically zero for $u$ in the region $\{u:\ee\le F_{\b}(u)\le 1-\ee\}$ if the conditional distribution of $X$, given $T-\b'X=u$, is non-degenerate for some $u$ in this region if $F_0$ is strictly increasing on $\{u:\ee\le F_{\b}(u)\le 1-\ee\}$.
}
\end{remark}

The proof of Theorem \ref{theorem:method1} is given in the supplemental article \cite{GroeneboomHendrickx-supplement}. A picture of the truncated profile log likelihood $l_n^{(\ee)}(\b, \hat F_{n,\b})$ and the score function $\psi_{1,n}^{(\ee)}(\b)$ for $\b$ ranging from 0.45 to 0.55 is shown in Figure \ref{fig:MLE1}. Note that, since the MLE $\hat F_{n,\b}$ depends on the ranks of the $T_i-\b'X_i$, both curves are piecewise constant where jumps are possible if the ordering in $T_i-\b'X_i$ changes when $\b$ changes. Due to the discontinuous nature of the profiled log likelihood and the score function, the estimators are not necessary unique. The result of Theorem \ref{theorem:method1} is valid for any $\hat \b_n$ satisfying 
Definition \ref{zero-crossing}.

\subsection{Efficient estimates involving the MLE $\hat F_{n,\b}$}
\label{subsection:MLE2}

Let $K$ be a probability density function with derivative $K'$ satisfying
\begin{itemize}
	\item[(K1)] The probability density $K$ has support [-1,1], is twice continuously differentiable and symmetric on $\R$.
\end{itemize}
Let $h>0$ be a smoothing parameter and $K_h$ respectively $K'_h$ be the scaled versions of $K$ and $K'$ respectively,  given by
\begin{align*}
K_h(\cdot) = h^{-1 }K\left(h^{-1}(\cdot)\right) \quad \text{and} \quad K'_h(\cdot) = h^{-2 }K'\left(h^{-1}(\cdot)\right).
\end{align*}
The triweight kernel is used in the simulation examples given in the remainder of the paper.
Define the density estimate,
\begin{align}
\label{density_estimate}
 f_{nh,\b}(t-\b'x) = \int K_h(t-\b'x -w)\,d\hat F_{n,\b}(w).
\end{align}
We consider
\begin{align}
\label{score2}
\psi_{2,nh}^{(\ee)}(\b)\stackrel{\text{\small def}}= \nonumber& \int_{\hat F_{n,\b}(t-\b'x)\in[\ee,1-\ee]}x f_{nh,\b}(t-\b'x)\\
&\qquad\qquad\qquad\cdot\frac{\d -\hat F_{n,\b}(t-\b'x)}{\hat F_{n,\b}(t-\b'x)\{1-\hat F_{n,\b}(t-\b'x)\}}\,d\P_n(t,x,\d),
\end{align}
and let, analogously to the first estimator defined in the previous section, $\hat \beta_n$ be the estimate of $\b_0$, defined by a zero-crossing of the score function $\psi_{2,nh}^{(\ee)}$.

\begin{theorem}
	\label{theorem:method2}
	Suppose that the conditions of Theorem \ref{theorem:method1} hold and that the function $F_\b$ is three times continuously differentiable on the interior of the support $S_\b$. Let $\hat\b_n$ be defined by a zero-crossing of $\psi_{2,nh}^{(\ee)}$. Then, as $n\to\infty$, and $h\asymp n^{-1/7}$, 
	\begin{enumerate}
	\item[(i)]
	\text{\rm[Existence of a root]} For all large $n$, a crossing of zero $\hat \b_n$ of $\psi_{2,nh}^{(\ee)}$ exists with probability tending to one.
	\item[(ii)]
	\text{\rm[Consistency]}
	\begin{align*}
	\hat\b_n \stackrel{p}{\rightarrow}\b_0,\qquad n\to\infty.
	\end{align*}
	\item[(iii)]
	\text{\rm[Asymptotic normality]}	
	$\sqrt{n}\bigl\{\hat\b_n-\b_0\bigr\}$ is asymptotically normal with mean zero and variance $I_{\ee}(\b_0)^{-1}$, where
	\begin{equation}
	\label{I(beta_0)}
	I_{\ee}(\b_0)=\E_\ee\left\{  \frac{ f_0(T- \b_0'X)^2\,\text{\rm Cov}(X|T-\b_0'X)}{F_0(T- \b_0'X)\{1-F_0(T- \b_0'X)\}} \right\},
	\end{equation}
	 which is assumed to be non-singular.
	 \end{enumerate}
\end{theorem}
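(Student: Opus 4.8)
\emph{Proof proposal.} The plan is to run the same three-step scheme as in the proof of Theorem~\ref{theorem:method1} — a population-level identification/monotonicity analysis, existence of a zero-crossing together with consistency, and an asymptotic-linearity expansion — but to arrange the bookkeeping so that the leading stochastic term of $\psi_{2,nh}^{(\ee)}$ becomes (minus) the empirical efficient score $\P_n\bigl[1_{\{F_0(T-\b_0'X)\in[\ee,1-\ee]\}}\,\tilde\ell_{\b_0,F_0}\bigr]$, with $\tilde\ell_{\b_0,F_0}$ as in (\ref{efficient_score}), so that the sandwich covariance $A^{-1}BA^{-1}$ of Theorem~\ref{theorem:method1} collapses to $I_{\ee}(\b_0)^{-1}$. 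First I would introduce the population score
\[
\psi_{2,\ee}(\b)=\E\Bigl[1_{\{F_\b(T-\b'X)\in[\ee,1-\ee]\}}\,X\,\frac{f_\b(T-\b'X)}{F_\b(T-\b'X)\{1-F_\b(T-\b'X)\}}\{\d-F_\b(T-\b'X)\}\Bigr],
\]
and, exactly as in Lemma~\ref{lemma:population_score}, rewrite it as $\E_{\ee,\b}\bigl[w_\b(T-\b'X)\,\mathrm{Cov}(X,\d\mid T-\b'X)\bigr]$ with $w_\b=f_\b/[F_\b(1-F_\b)]>0$ on the truncation region (by (A2)--(A3)) and $\mathrm{Cov}(X,\d\mid T-\b'X=u)=\mathrm{Cov}\bigl(X,F_0(u+(\b-\b_0)'X)\mid T-\b'X=u\bigr)$. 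Positivity of $w_\b$ shows that the identification/monotonicity statement $(\b-\b_0)'\psi_{2,\ee}(\b)\ge0$, with equality only at $\b_0$, carries over verbatim from Lemma~\ref{lemma:population_score}(ii), and differentiating at $\b_0$ — where the covariance factor vanishes and thus annihilates the derivatives of the indicator and of $w_\b$ — gives $\psi_{2,\ee}'(\b_0)=I_{\ee}(\b_0)$. Combining Lemma~\ref{lemma:MLE_misspecified}(ii), the kernel conditions (K1), the rate $h\asymp n^{-1/7}$ and the smoothness in (A4)--(A5), one obtains $\psi_{2,nh}^{(\ee)}\to\psi_{2,\ee}$ in probability uniformly over $\b\in\Theta$ (the mismatch between the indicators $1_{\{\hat F_{n,\b}\in[\ee,1-\ee]\}}$ and $1_{\{F_\b\in[\ee,1-\ee]\}}$ being confined to sets of small $P_0$-measure via the slightly larger interval $[\ee',1-\ee']$), and then parts~(i) and~(ii) follow by the same topological zero-crossing and consistency argument used for $\psi_{1,n}^{(\ee)}$ and Definition~\ref{zero-crossing}.

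The core of the proof is the expansion, valid uniformly over $o_p(1)$-neighbourhoods of $\b_0$,
\[
\psi_{2,nh}^{(\ee)}(\b)=I_{\ee}(\b_0)(\b-\b_0)-(\P_n-P_0)\bigl[1_{\{F_0(\cdot)\in[\ee,1-\ee]\}}\,\tilde\ell_{\b_0,F_0}\bigr]+o_p(\|\b-\b_0\|)+o_p(n^{-1/2}).
\]
Granting this, the argument of item~(iii) in Section~\ref{section:model} applied to a zero-crossing $\hat\b_n$ of $\psi_{2,nh}^{(\ee)}$ (the discreteness of the score at the crossing being handled exactly as for $\psi_{1,n}^{(\ee)}$ in Theorem~\ref{theorem:method1}) gives $\hat\b_n-\b_0=I_{\ee}(\b_0)^{-1}(\P_n-P_0)\bigl[1_{\{\cdot\}}\tilde\ell_{\b_0,F_0}\bigr]+o_p(n^{-1/2})$, whence by the central limit theorem $\sqrt n(\hat\b_n-\b_0)$ is asymptotically normal with covariance $I_{\ee}(\b_0)^{-1}\,\E_\ee[\tilde\ell_{\b_0,F_0}\tilde\ell_{\b_0,F_0}']\,I_{\ee}(\b_0)^{-1}$; and since at $\b_0$ one has $\d\perp X\mid T-\b_0'X$ with $\mathrm{Var}(\d\mid T-\b_0'X)=F_0(1-F_0)$, a short conditioning computation gives $\E_\ee[\tilde\ell_{\b_0,F_0}\tilde\ell_{\b_0,F_0}']=I_{\ee}(\b_0)$, so the covariance is $I_{\ee}(\b_0)^{-1}$, as claimed.

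To prove the expansion I would split $X=\E(X\mid T-\b'X)+\{X-\E(X\mid T-\b'X)\}$ inside the integrand of $\psi_{2,nh}^{(\ee)}$. For the $\E(X\mid\cdot)$-part, keeping $\hat F_{n,\b}$ in all three places it occurs, the cumulative-sum / left-derivative-of-convex-minorant characterization of the MLE (the block identity $\sum_j\{\d_{(j)}^{(\b)}-\hat F_{n,\b}(U_{(j)}^{(\b)})\}=0$ over maximal intervals of constancy, cf.\ Proposition~1.2 in \cite{GrWe:92}) together with the near-constancy of the smooth weight $\E(X\mid\cdot)w(\cdot)$ over the $O_p(n^{-1/3})$-length blocks forces this part to be $o_p(n^{-1/2})$, uniformly in $\b$ near $\b_0$. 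For the $\{X-\E(X\mid\cdot)\}$-part, which has conditional mean zero given $T-\b'X$, replacing $\hat F_{n,\b_0}$ by $F_0$ and $f_{nh,\b_0}$ by $f_0$ creates no $P_0$-bias — the conditional-mean-zero factor annihilates it, including the deterministic kernel bias $f_{0,h}-f_0=O(h^2)$ — so that only $(\P_n-P_0)$-remainders survive; these are $n^{-1/2}$ times the local modulus of the empirical process over the class of monotone functions $\hat F_{n,\b_0}$ in an $L_2$-ball of radius $O_p(n^{-1/3})$ and over the class of kernel-smoothed monotone estimates with $\|f_{nh,\b_0}-f_0\|=O_p\bigl(h^2+(nh^3)^{-1/2}\bigr)=O_p(n^{-2/7})$, hence $o_p(n^{-1/2})$ (the bandwidth $h\asymp n^{-1/7}$ being exactly the rate-optimal choice for the smoothed-MLE density estimate, and the third-derivative hypothesis on $F_\b$ being what controls the $O(h^2)$ kernel bias). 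Finally, the linearization in $\b$ around $\b_0$, with $I_{\ee}(\b_0)(\b-\b_0)$ as leading term and $o_p(\|\b-\b_0\|)$ error, is obtained by combining the above with Lemma~\ref{lemma:MLE_misspecified} and the joint continuity in $(\b,u)$ of $f_{T-\b'X}$, $\E(X\mid T-\b'X)$ and $F_\b$ from (A4)--(A5).

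The main obstacle is this last step: controlling simultaneously the three coupled sources of error — the $n^{-1/3}$-rate nonparametric MLE $\hat F_{n,\b}$, its kernel-smoothed derivative $f_{nh,\b}$, and the dependence of both on $\b$ — and showing that everything except the empirical efficient score is $o_p(n^{-1/2})$. This rests on two delicate ingredients which are where the bulk of the technical work lies: the exact martingale/block identities for $\hat F_{n,\b}$, needed to kill the $\E(X\mid\cdot)$-part without losing a power of $n$; and sharp (bracketing) entropy bounds for monotone and kernel-smoothed-monotone function classes intersected with shrinking $L_2$-balls, needed to bound the surviving empirical-process remainders — and it is precisely these requirements that dictate the regularity in (A4)--(A5), the third-derivative condition on $F_\b$, and the bandwidth rate $h\asymp n^{-1/7}$.
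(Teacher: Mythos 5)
Your proposal is correct and follows essentially the same route as the paper's proof: the same population-score identification with $\psi_{2,\ee}'(\b_0)=I_\ee(\b_0)$, existence/consistency carried over from the $\psi_{1,n}^{(\ee)}$ argument, and an asymptotic-linearity expansion resting on exactly the paper's two pillars — the Fenchel/block identity of the MLE (which the paper implements via the piecewise-constant interpolant $\bar\varphi_{\b,F}$ of $\E(X\mid\cdot)f_\b$ rather than your direct split $x=\E(X\mid\cdot)+\{x-\E(X\mid\cdot)\}$, a purely cosmetic difference) and bracketing-entropy/equicontinuity bounds combined with Cauchy--Schwarz using $\|\hat F_{n,\b}-F_\b\|_2=O_p(n^{-1/3})$ and $\|f_{nh,\b}-f_\b\|_2=O_p(n^{-2/7})$ at $h\asymp n^{-1/7}$. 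The final identification of the limiting covariance via $\E_\ee[\tilde\ell_{\b_0,F_0}\tilde\ell_{\b_0,F_0}']=I_\ee(\b_0)$ also matches the paper.
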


A picture of the score function $\psi_{2,nh}^{(\ee)}(\b)$ is shown in Figure \ref{fig:MLE2}. Note that the range on the vertical axis is considerably larger than the range on the vertical axis of the corresponding score function $\psi_{1,n}^{(\ee)}(\b)$.

\begin{figure}[!ht]
	\begin{subfigure}[b]{0.43\textwidth}
		\centering
		\includegraphics[width=\textwidth]{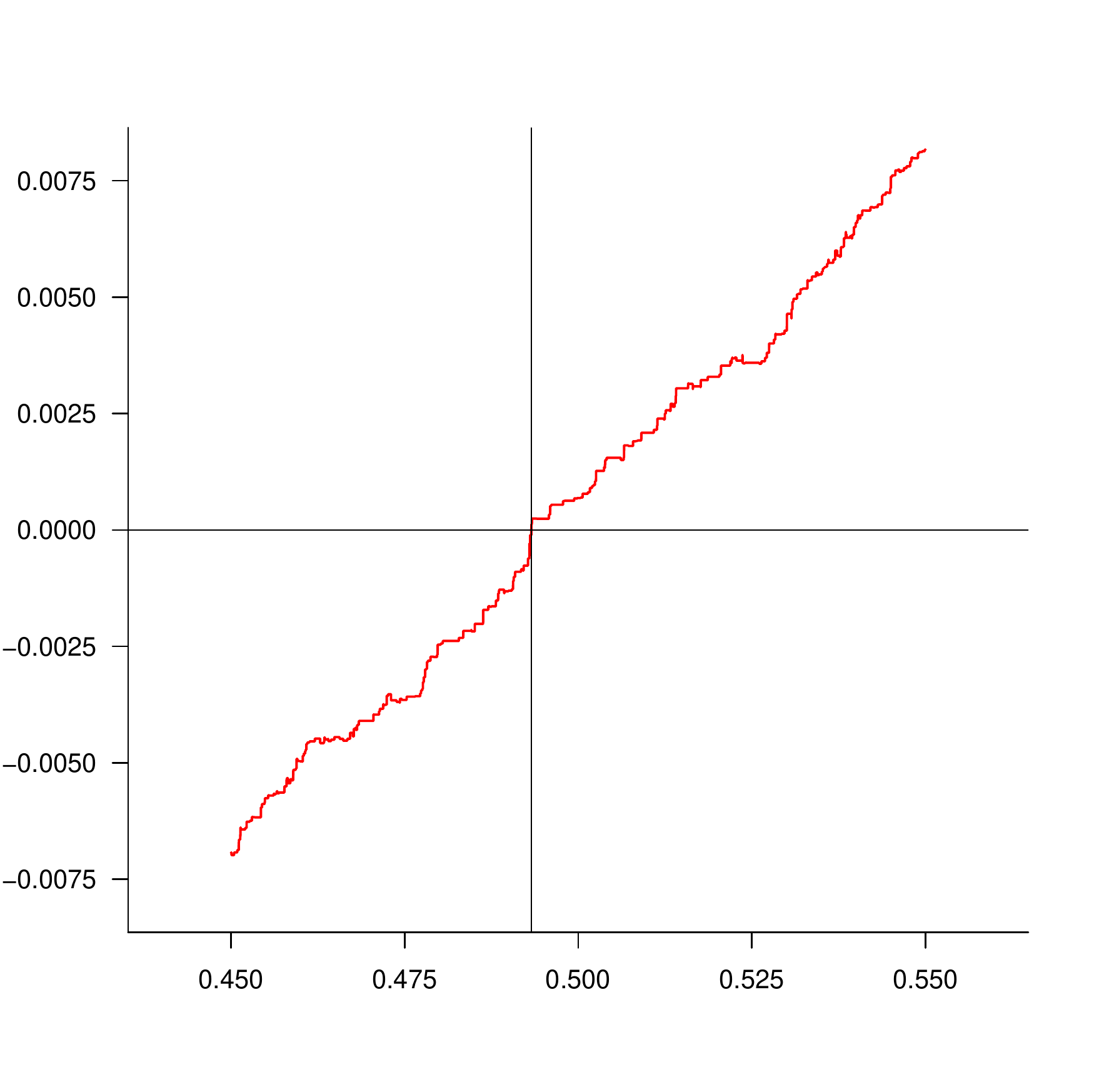}
	\end{subfigure}
	\hspace{0.5cm}
	\begin{subfigure}[b]{0.43\textwidth}
			\includegraphics[width=\textwidth]{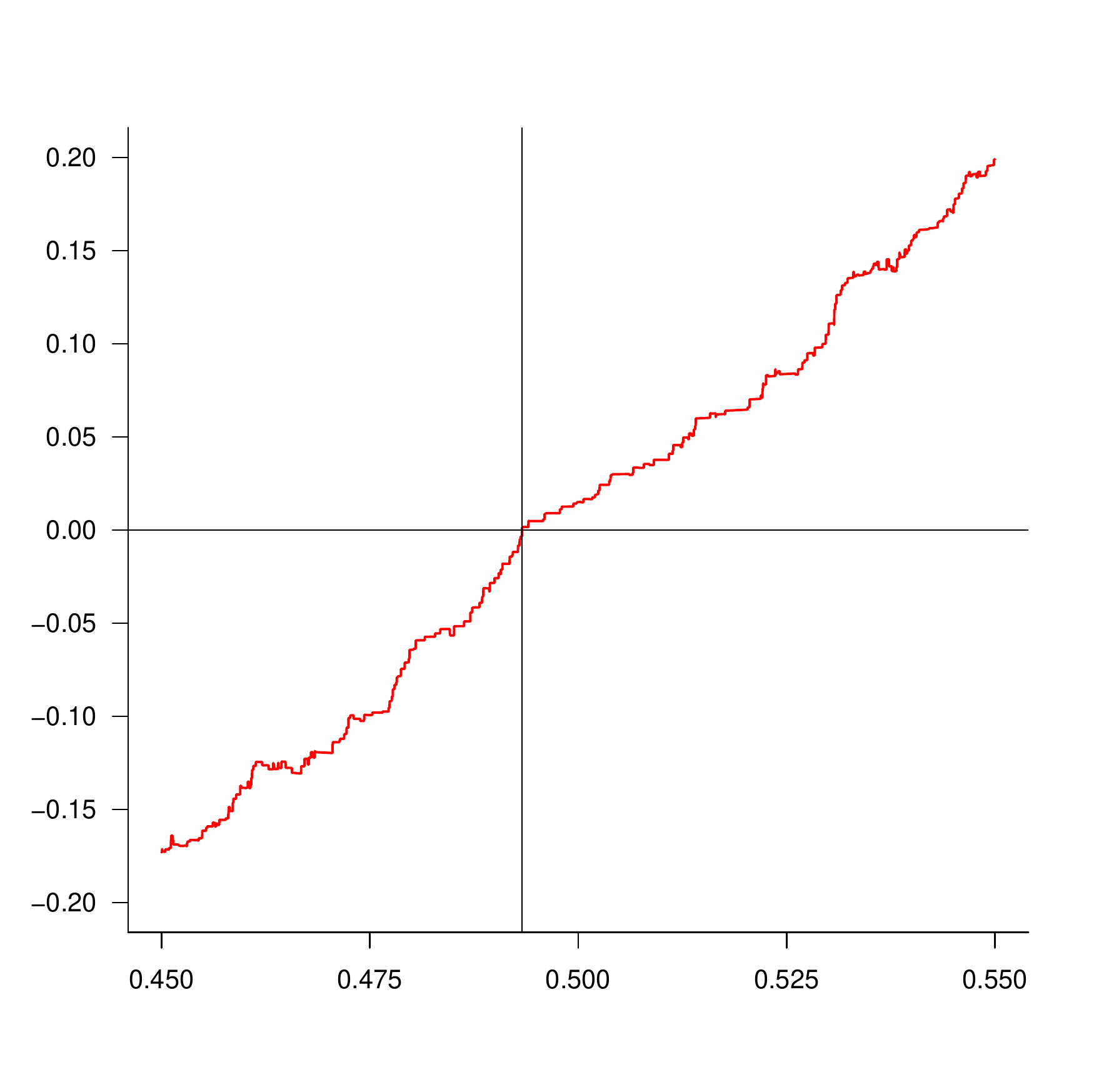}
	\end{subfigure}
	\caption{The score functions $\psi_{1,n}^{(\ee)}$ (left panel) and $\psi_{2,nh}^{(\ee)}$ (right panel) as a function of $\b$ for a sample of size $n=1000$ with $\ee=0.001$ and $h=0.5n^{-1/7}$.}
\label{fig:MLE2}
\end{figure}

\subsection{Efficient estimates not involving the MLE $\hat F_{n,\b}$ }
\label{subsection:plugin}
Define the plug-in estimate
\begin{equation}
\label{plug_in_estimate}
F_{nh,\b}(t-\b'x)=\frac{\int \d K_h(t-\b'x -u+ \b'y)\,d\P_n(u,y,\d)}{\int K_h(t-\b'x -u+ \b'y)\,d\G_n(u,y)},
\end{equation}
where $\G_n$ is the empirical distribution function of the pairs $(T_i,X_i)$ and where $K_h$ is a scaled version of a probability density function $K$, satisfying condition (K1); the probability measure of $(T,X)$ will be denoted by $G$. The plug-in estimates are not necessarily monotone but we show in Theorem \ref{th:monotonicity} that $F_{nh,\b}$ is monotone with probability tending to one as $n \rightarrow \infty$ and $\b \rightarrow \b_0.$ Another way of writing $F_{nh,\b}$ is in terms of ordinary sums. Let
\begin{align}
\label{g1}
g_{nh, 1,\b}(t-\b' x)=\frac1{n}\sum_{j=1}^n\dd_j K_h(t-\b' x -T_j+ \b' X_j),
\end{align}
and,
\begin{align}
\label{g}
g_{nh,\b}(t-\b' x)=\frac1{n}\sum_{j=1}^n K_h(t-\b' x -T_j+\b' X_j),
\end{align}
then,
$$
F_{nh,\b}(t-\b' x)=\frac{g_{nh,1,\b}(t-\b' x)}{g_{nh,\b}(t-\b' x)} = \frac{\sum_{j=1}^n \dd_jK_h(t-\b' x-T_j+\b' X_j)}{\sum_{j=1}^n K_h(t-\b' x-T_j+ \b' X_j)},
$$ 
in which we recognize the Nadaraya-Watson statistic. One could also omit the diagonal term $j=i$ in the sums above when estimating $F_{nh,\b}(T_i-\b' X_i)$ which is often done in the econometric literature (see e.g. \cite{hardle:93}). In our computer experiments however, this gave an estimate of the distribution function which had a more irregular behavior than the estimator with the diagonal term included.

If we replace $F$ in (\ref{truncated_likelihood}) by $F_{nh,\b}$, the truncated log likelihood becomes a function of $\b$ only.
Although the log likelihood has discontinuities if we consider the lower and upper boundaries $F_{nh,\b}^{-1}(\ee)$ and $F_{nh,\b}^{-1}(1-\ee)$ of the integral also as a function of $\b$, an asymptotic representation of the partial derivatives of the truncated log likelihood is given by the score function,
\begin{align}
\label{score_equation}  
&\psi_{3,nh}^{(\ee)}(\b) \stackrel{\text{\small def}}{=}\int_{F_{nh,\b}(t-\b'x)\in[\ee,1-\ee]}\partial_\b F_{nh,\b}(t-\b'x) \nonumber\\
& \qquad\qquad\qquad\qquad\cdot\frac{\d-F_{nh,\b}(t-\b'x)}{F_{nh,\b}(t-\b'x)\{1-F_{nh,\b}(t-\b'x)\}}\,d\P_n(t,x,\d),
\end{align}
where the partial derivative of the plug-in estimate $F_{nh,\b}(t-\b' x)$, given by (\ref{plug_in_estimate}), w.r.t.\ $\b$ has the following form:
\begin{align}
\label{partial_derivative}
&\partial_\b F_{nh,\b}(t-\b' x)=\frac{\int(y-x)\{\d-F_{nh,\b}(t-\b' x)\}K_h'(t-\b' x-u+\b' y)\,d\P_n(u,y,\d)}{g_{nh,\b}(t-\b' x)}\,,
\end{align}
where $g_{nh,\b}(t-\b' x)$ is defined in (\ref{g}). 
We define the plug-in estimator $\hat \b_n$ of $\b_0$ by
\begin{align}
\label{estimator3}
\psi_{3,nh}^{(\ee)}(\hat\b_n) = 0.
\end{align}
A picture of the truncated log likelihood $l_n^{(\ee)}(\b,F_{nh,\b})$ and score function $\psi_{3,nh}^{(\ee)}(\b)$ for the plug-in method is shown in Figure \ref{fig:plugin}. Since $F_{nh,\b}(t-\b'x)$ is continuous, we no longer need to introduce the concept of a zero-crossing to ensure existence of the estimator and we can work with the zero of the score function $\psi_{3,nh}^{(\ee)}(\b)$.
\begin{figure}[!ht]
	\begin{subfigure}[b]{0.43\textwidth}
		\centering
		\includegraphics[width=\textwidth]{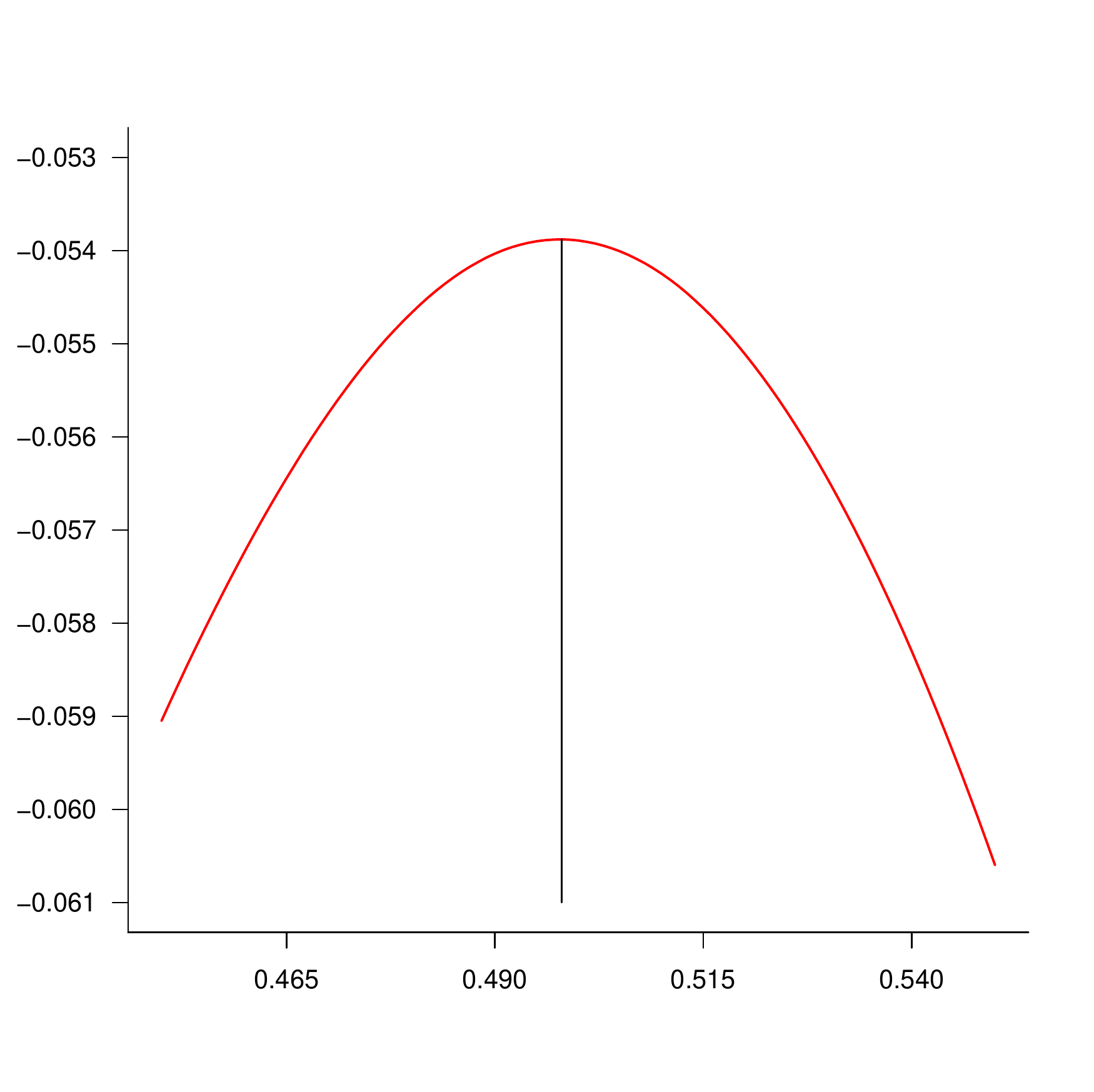}
	\end{subfigure}
	\hspace{0.5cm}
	\begin{subfigure}[b]{0.43\textwidth}
		\includegraphics[width=\textwidth]{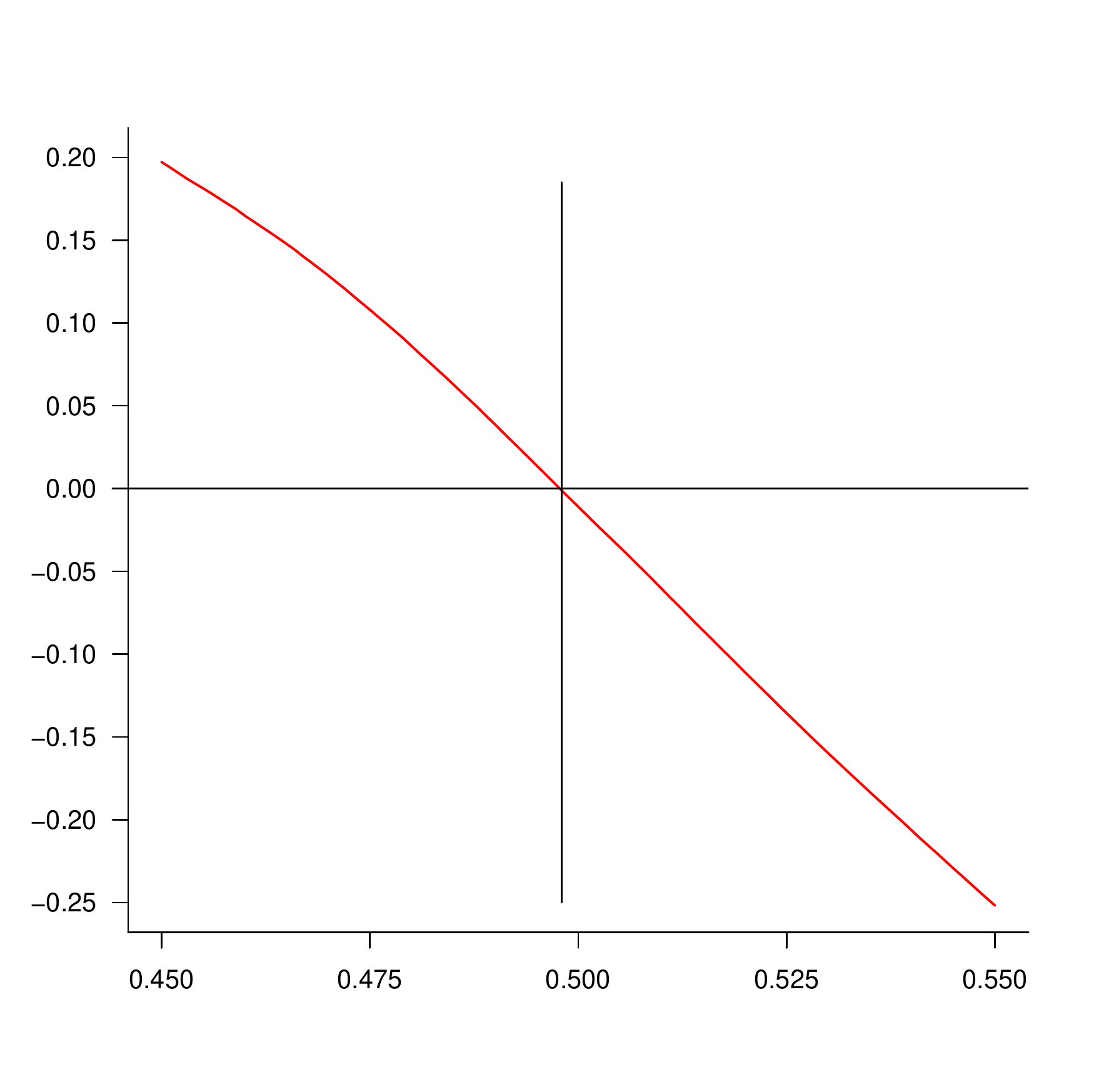}
	\end{subfigure}
	\caption{The truncated profile log likelihood $l_n^{(\ee)}$ for the plug-in $F_{nh,\b}$ (left panel) and the score function $\psi_{3,nh}^{(\ee)}$ (right panel) as a function of $\b$ for a sample of size $n=1000$ with $\ee=0.001$ and $h=0.5n^{-1/5}$.}
	\label{fig:plugin}
\end{figure}


Our main result on the plug-in estimator is given below.

\begin{theorem}
	\label{th:asymptotic_normality}
	If Assumptions (A1)-(A5) hold and 
	\begin{align}
	\label{identifiability_plugin}
	&-\left(\b-\b_0\right)' \int_{F_{\b}(t-\b'x)\in[\ee,1-\ee]}\partial_{\b}F_{\b}(t-\b'x)\frac{F_0(t-\b_0'x) -F_{\b}(t-\b'x)}{F_{\b}(t-\b'x)\{1-F_{\b}(t-\b'x)\}}dG(t,x),
	\end{align}
	is nonzero  for each $\b \in \Theta$ except for $\b = \b_0$,
	then, for $\hat\b_n$ being the plug-in estimator introduced above, as $n\to\infty$, and $h\asymp n^{-1/5}$,
	\begin{enumerate}
	\item[(i)]
	\text{\rm[Existence of a root]} For al large $n$ a point $\hat\b_n$, satisfying (\ref{estimator3}), exists with probability tending to one.
	\item[(ii)]
	\text{\rm[Consistency]}
	\begin{align*}
	\hat\b_n \stackrel{p}{\rightarrow}\b_0,\qquad n\to\infty.
	\end{align*}  
	\item[(iii)]
	\text{\rm[Asymptotic normality]}	
	$\sqrt{n}\bigl\{\hat\b_n-\b_0\bigr\}$ is asymptotically normal with mean zero and variance $I_{\ee}(\b_0)^{-1}$ where $I_{\ee}(\b_0)$, defined in (\ref{I(beta_0)}), is assumed to be non-singular.
	\end{enumerate}
\end{theorem}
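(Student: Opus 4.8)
## Proof Proposal for Theorem \ref{th:asymptotic_normality}

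The plan is to follow the score route described in Section~\ref{section:model}(iii): first study the deterministic version of the score $\psi_{3,nh}^{(\ee)}$, then locate $\hat\b_n$ near the unique zero $\b_0$ of that deterministic version, and finally obtain the limit law from an asymptotic linear expansion of $\psi_{3,nh}^{(\ee)}$ around $\b_0$. I would begin by introducing the population score
\begin{align*}
\psi_{3,\ee}(\b)=\int_{F_{\b}(t-\b'x)\in[\ee,1-\ee]}\partial_{\b}F_{\b}(t-\b'x)\,\frac{F_0(t-\b_0'x)-F_{\b}(t-\b'x)}{F_{\b}(t-\b'x)\{1-F_{\b}(t-\b'x)\}}\,dG(t,x).
\end{align*}
Since $F_{\b_0}=F_0$ we have $\psi_{3,\ee}(\b_0)=0$, and the assumption that (\ref{identifiability_plugin}) is nonzero for $\b\neq\b_0$ says precisely that $(\b-\b_0)'\psi_{3,\ee}(\b)\neq0$ there, so $\b_0$ is the only zero. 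Differentiating $\psi_{3,\ee}$ at $\b_0$ and using that the factor $F_0(t-\b_0'x)-F_{\b}(t-\b'x)$ vanishes at $\b_0$, only the term in which $\partial_\b$ hits that factor survives; together with the identity $\partial_\b F_\b(t-\b'x)\big|_{\b_0}=f_0(t-\b_0'x)\{\E(X|T-\b_0'X=t-\b_0'x)-x\}$ this yields $\psi_{3,\ee}'(\b_0)=-I_{\ee}(\b_0)$, with $I_\ee(\b_0)$ as in (\ref{I(beta_0)}); in particular $\psi_{3,\ee}'(\b_0)$ is nonsingular under the stated assumption.

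Next I would control the plug-in estimate uniformly. Writing $F_{nh,\b}=g_{nh,1,\b}/g_{nh,\b}$ and $\partial_\b F_{nh,\b}$ as in (\ref{partial_derivative}), I would establish, uniformly over $\b$ in a fixed neighborhood of $\b_0$ and over the argument, the rates $\|F_{nh,\b}-F_{\b}\|_\infty=O_p(h^2+(nh)^{-1/2}\sqrt{\log n})$ and $\|\partial_\b F_{nh,\b}-\partial_\b F_{\b}\|_\infty=O_p(h^2+(nh^3)^{-1/2}\sqrt{\log n})$, by combining the usual Nadaraya--Watson bias and variance expansions (valid under (A4), (A5), (K1) and the assumed three-fold differentiability of $F_\b$) with maximal inequalities for the VC-type classes $\{(u,y,\d)\mapsto \d^{j}K_h(v-u+\b'y)\}$ and $\{(u,y,\d)\mapsto (y-x)\d^{j}K_h'(v-u+\b'y)\}$. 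Invoking Theorem~\ref{th:monotonicity}, $F_{nh,\b}$ is monotone with probability tending to one, so the random truncation region $\{F_{nh,\b}(t-\b'x)\in[\ee,1-\ee]\}$ is an interval whose endpoints converge to those of $A_{\ee,\b}$, and the contribution to $\psi_{3,nh}^{(\ee)}$ of the symmetric difference between the random and the population truncation sets is $o_p(n^{-1/2})$. From these facts $\psi_{3,nh}^{(\ee)}\to\psi_{3,\ee}$ uniformly near $\b_0$, and since $\b_0$ is an isolated sign-changing zero of the continuous map $\psi_{3,\ee}$ with nonsingular derivative, a zero $\hat\b_n$ of $\psi_{3,nh}^{(\ee)}$ exists in every fixed small ball around $\b_0$ with probability tending to one and any such sequence satisfies $\hat\b_n\stackrel{p}{\rightarrow}\b_0$; this proves (i) and (ii).

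For (iii) I would expand $0=\psi_{3,nh}^{(\ee)}(\hat\b_n)=\psi_{3,nh}^{(\ee)}(\b_0)+\{\psi_{3,nh}^{(\ee)}(\hat\b_n)-\psi_{3,nh}^{(\ee)}(\b_0)\}$. A stochastic equicontinuity argument for $\b\mapsto\psi_{3,nh}^{(\ee)}(\b)-\psi_{3,\ee}(\b)$ over $o_p(1)$ balls, together with differentiability of $\psi_{3,\ee}$, gives $\psi_{3,nh}^{(\ee)}(\hat\b_n)-\psi_{3,nh}^{(\ee)}(\b_0)=-I_\ee(\b_0)(\hat\b_n-\b_0)+o_p(\|\hat\b_n-\b_0\|)+o_p(n^{-1/2})$. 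The crucial step is the asymptotic linearity of the score at the truth, namely
\begin{align*}
\psi_{3,nh}^{(\ee)}(\b_0)=\frac1n\sum_{i=1}^n 1_{\{F_0(T_i-\b_0'X_i)\in[\ee,1-\ee]\}}\,\tilde\ell_{\b_0,F_0}(T_i,X_i,\dd_i)+o_p(n^{-1/2}),
\end{align*}
with $\tilde\ell_{\b_0,F_0}$ the efficient score (\ref{efficient_score}). To prove this I would substitute $(\dd-F_{nh,\b_0})/[F_{nh,\b_0}(1-F_{nh,\b_0})]=(\dd-F_0)/[F_0(1-F_0)]+(F_0-F_{nh,\b_0})R_{nh}$ and $\partial_\b F_{nh,\b_0}=\partial_\b F_{\b_0}+(\partial_\b F_{nh,\b_0}-\partial_\b F_{\b_0})$, integrate against $d\P_n$, and show each remainder is $o_p(n^{-1/2})$: the term linear in $F_0-F_{nh,\b_0}$ vanishes to first order because the efficient score is orthogonal to the nuisance directions, i.e.\ its $x$-dependence enters only through $\{\E(X|T-\b_0'X=\cdot)-x\}$, whose conditional mean is zero, so only terms quadratic in $F_0-F_{nh,\b_0}$ survive, of order $O_p(h^4+(nh)^{-1})=O_p(n^{-4/5})$ for $h\asymp n^{-1/5}$; the terms carrying $\partial_\b F_{nh,\b_0}-\partial_\b F_{\b_0}$, although only $O_p(n^{-1/5})$ uniformly, are paired with the mean-zero factor $\dd-F_0$ and, rewritten as degenerate $U$-statistics localized by $K_h'$ with the diagonal vanishing identically (the $j=i$ term carries the factor $X_j-X_i=0$), are $O_p(n^{-1}h^{-3/2})=o_p(n^{-1/2})$; the deterministic bias is likewise negligible because it too is paired with $\dd-F_0$. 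A central limit theorem for the i.i.d.\ sum, whose summand has covariance $\E_\ee[\tilde\ell_{\b_0,F_0}\tilde\ell_{\b_0,F_0}']=I_\ee(\b_0)$ by the computation in Section~\ref{section:Appendix}, together with $\psi_{3,\ee}'(\b_0)=-I_\ee(\b_0)$, then yields $\sqrt n(\hat\b_n-\b_0)=I_\ee(\b_0)^{-1}\,n^{-1/2}\sum_i 1_{\{\cdot\}}\tilde\ell_{\b_0,F_0}(T_i,X_i,\dd_i)+o_p(1)$, which is asymptotically $N(0,I_\ee(\b_0)^{-1})$.

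The main obstacle is the $o_p(n^{-1/2})$ control of $\psi_{3,nh}^{(\ee)}(\b_0)$: one must show that the comparatively noisy derivative estimate $\partial_\b F_{nh,\b_0}$, built from $K_h'$ and hence only $O_p(n^{-1/5})$ in sup norm, does not spoil the linearization. This forces a careful $U$-statistic and empirical-process decomposition that exploits the oddness of $K'$, the conditional-mean-zero structure of the weight $\partial_\b F_{\b_0}$, and the martingale-type structure of $\dd-F_0$, carried out uniformly enough to be combined with the random, $\b$-dependent truncation region; managing this interaction, rather than any individual bias or variance estimate, is where the effort concentrates.
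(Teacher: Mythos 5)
Your proposal is sound and reaches the right conclusion, but on the decisive technical step it takes a route the paper deliberately avoids. Both arguments share the same skeleton: the population score $\psi_{3,\ee}$ with $\psi_{3,\ee}(\b_0)=0$, uniqueness of the zero via (\ref{identifiability_plugin}), the identity $\psi_{3,\ee}'(\b_0)=-I_{\ee}(\b_0)$, and a linearization of $\psi_{3,nh}^{(\ee)}$ around $\b_0$. The difference is how the noisy derivative estimate is neutralized. The paper's proof rests on \emph{$L_2$}-rate bounds (its Lemma on the distance between $F_{nh,\b}$ and $F_{\b}$, and between $\partial_{\b}F_{nh,\b}$ and $\partial_{\b}F_{\b}$, of orders $n^{-2/5}$ and $n^{-1/5}$ for $h\asymp n^{-1/5}$): every appearance of $\partial_{\b}F_{nh,\b}-\partial_{\b}F_{\b}$ is paired either with a centered empirical-process increment, killed by a bracketing-entropy equicontinuity lemma (with Nickl's bound on derivatives of kernel estimates supplying the entropy control), or with a factor that is itself $O_p(n^{-2/5})$ in $L_2$, killed by Cauchy--Schwarz since $n^{-1/5}\cdot n^{-2/5}=o(n^{-1/2})$. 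You instead work with sup-norm rates and attack the cross term $\int\{\partial_{\b}F_{nh,\b_0}-\partial_{\b}F_{\b_0}\}(\dd-F_0)\,d\P_n$ head-on as a second-order $U$-statistic, using the Hoeffding decomposition: one projection vanishes because $\dd_i-F_0(T_i-\b_0'X_i)$ is conditionally centered, the other is the $O(h)$ bias paired with a centered i.i.d.\ sum, and the degenerate remainder has variance $O(n^{-2}h^{-3})$, so the whole term is $o_p(n^{-1/2})$. That computation is correct (note only that under (A4)--(A5) the bias of $\partial_{\b}F_{nh,\b}$ is $O(h)$, not the $O(h^2)$ you state, which is harmless here), and your orthogonality argument for the term linear in $F_0-F_{nh,\b_0}$ — integrating out $\E(X\mid T-\b_0'X)-x$ against the conditional law of $X$ — is exactly how the paper disposes of the analogous $dP_0$ drift. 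The trade-off: the paper's $L_2$/Cauchy--Schwarz scheme sidesteps the $U$-statistic variance bookkeeping entirely (it remarks explicitly that the $U$-statistic representation "requires lengthy and tedious calculations which are avoided in the current approach"), at the price of proving the uniform-in-$\b$ $L_2$ lemma and entropy bounds; your scheme makes the martingale and efficient-score orthogonality structure more transparent but must also manage the denominators $g_{nh,\b}$, $F_{nh,\b}(1-F_{nh,\b})$ and the random truncation set inside the degenerate kernel, which in the end requires essentially the same uniform rate and equicontinuity inputs.
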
	

\begin{remark}
\label{remark:Hoeffding}
\rm{ 
Note that using an expansion in $\b - \b_0$, we can write $\partial_\b F_{\b}(t-\b' x)$ as, 
\begin{align*}
&\int(y-x)f_0(t- \b_0'x + (\b-\b_0)'(y-x) )f_{X|T- \b'X}(y|T-\b'X=t-\b'x)dy\\
&\qquad+\int F_0(t- \b_0' x + (\b-\b_0)'(y-x) ) \partial_\b f_{X|T-\b' X}(y|T-\b' X=t-\b' x)\,dy\\
&=f_0(t- \b'x) \E\left\{X-x|T- \b'X=t- \b'x\right\} + O( \b - \b_0)
\end{align*}	
so that the integral defined in (\ref{identifiability_plugin}) can be approximated by,
\begin{align*}
	&-\left(\b-\b_0\right)' \int_{F_{\b}(u)\in[\ee,1-\ee]}f_0(u)\E\left\{X-x|T- \b 'X=u\right\}\\
	&\qquad\qquad\qquad\qquad\qquad\cdot\frac{F_0(u+ (\b - \b_0)'x) -F_{\b}(u)}{F_{\b}(u)\{1-F_{\b}(u)\}}f_{X|T-\b'X}(x|u)\,dx\,du\\
	& = \int_{F_{\b}(u)\in[\ee,1-\ee]} \frac{f_0(u)\,\text{\rm Cov}(( \b - \b_0)'X,F_0(u+(\b-\b_0)'X)|T-\b'X=u)}{F_{\b}(u)\{1-F_{\b}(u)\}}\,du,
\end{align*}
which is positive by the monotonicity of $F_0$. (See also the discussion in \cite{zhang:98} about this covariance and the proof of Lemma \ref{lemma:population_score} given in the supplemental material.)
A crucial property of the covariance used here, showing that the covariance is nonnegative, goes back to a representation of the covariance in \cite{Hoeffding:40}, which can easily be proved by an application of Fubini's theorem:
\begin{align*}
EXY-EXEY=\int\{\P(X\ge s,Y\ge t)-\P(X\ge s)\P(Y\ge t)\}\,ds\,dt.
\end{align*}
if $XY,X$ and $Y$ are integrable. Figure \ref{fig:identifiability_plugin} shows the integral in (\ref{identifiability_plugin})
for our simulation model for $\b \in [0.45,0.55]$ and illustrates that this integral is strictly positive except for $\b = \b_0$, which is a crucial property for the proof of the consistency of the plug-in estimator given in the supplemental article \cite{GroeneboomHendrickx-supplement}.  }
\end{remark}
\begin{figure}[!ht]
	\includegraphics[width=0.5\textwidth]{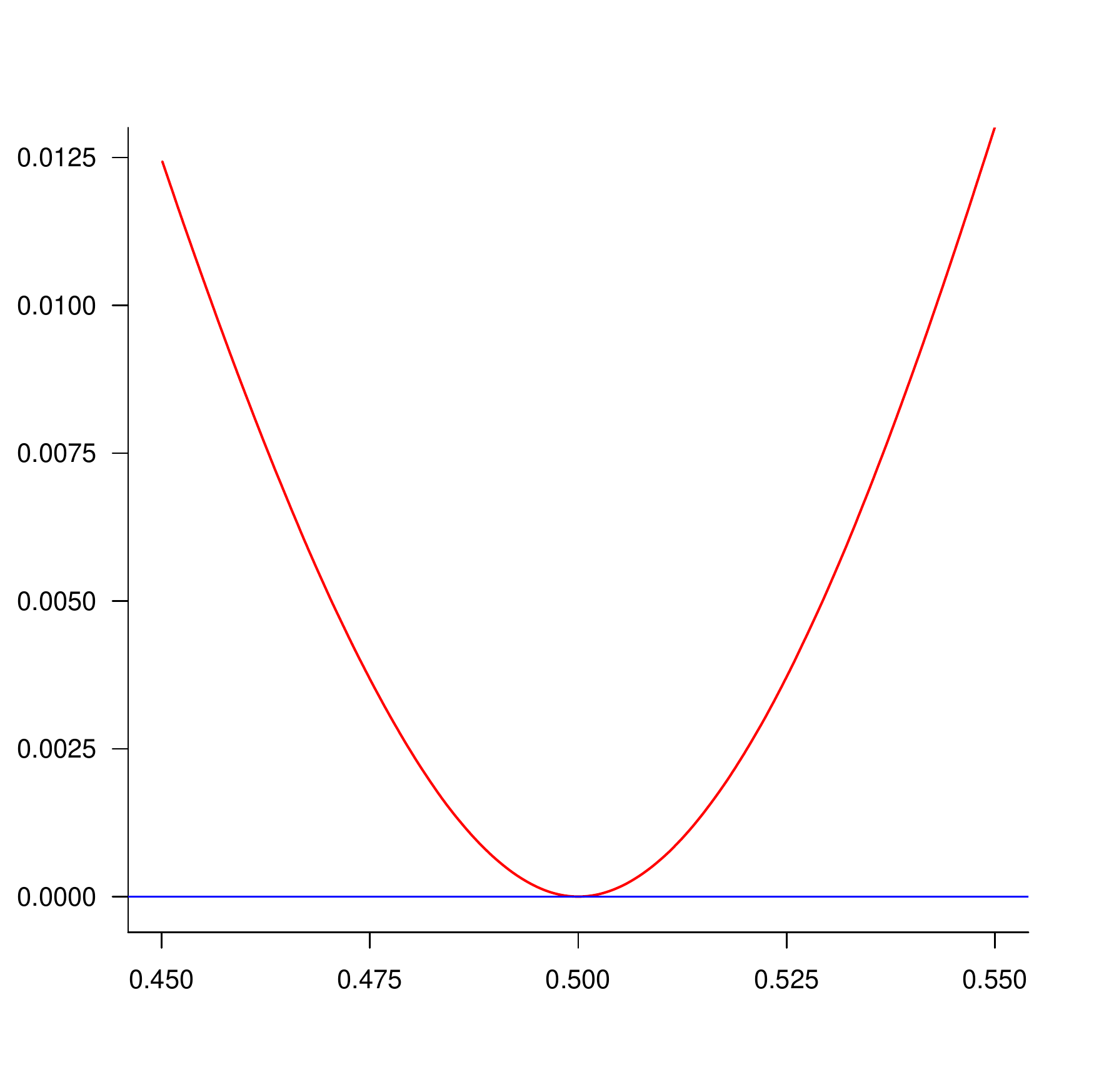}
	\caption{The integral defined in (\ref{identifiability_plugin}), as a function of $\b$, with $\ee = 0.001$.}
	\label{fig:identifiability_plugin}
\end{figure}

Section \ref{subsec:roadmap} contains a road map of the proof of Theorem \ref{th:asymptotic_normality}, the proof itself is given in the supplemental article \cite{GroeneboomHendrickx-supplement}.
We also have the following results for the plug-in estimate.
\begin{theorem} 
	\label{th:monotonicity}
	Let the conditions of Theorem \ref{th:asymptotic_normality} be satisfied, then we have on each interval $I$ contained in the support of $f_\b$ and for each $\b \in \Theta$
	\begin{align*}
	P\left\{F_{nh,\b} \text{ is monotonically increasing on } I\right\} \stackrel{p}\longrightarrow 1.
	\end{align*}
\end{theorem}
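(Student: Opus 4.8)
The plan is to differentiate the Nadaraya--Watson type estimate $F_{nh,\b}$ in its argument and to show that the sign of the derivative is governed by a quantity that converges to a strictly positive limit. Fix $\b\in\Theta$ and a compact interval $I$ on which $f_\b=F_\b'$ and $f_{T-\b'X}$ are bounded away from zero; by Assumptions (A2)--(A3) this holds for every compact subinterval of $A_{\ee',\b}$, which is also the region relevant for the truncated score function $\psi_{3,nh}^{(\ee)}$. Writing $v=t-\b'x$ and $F_{nh,\b}=g_{nh,1,\b}/g_{nh,\b}$ with $g_{nh,1,\b},g_{nh,\b}$ as in (\ref{g1})--(\ref{g}), condition (K1) makes numerator and denominator twice continuously differentiable in $v$, so on the event $\{\,g_{nh,\b}>0\text{ on }I\,\}$,
\begin{align*}
\partial_v F_{nh,\b}(v)=\frac{N_{nh,\b}(v)}{g_{nh,\b}(v)^2},\qquad
N_{nh,\b}(v):=g_{nh,1,\b}'(v)\,g_{nh,\b}(v)-g_{nh,1,\b}(v)\,g_{nh,\b}'(v).
\end{align*}
Since the denominator is nonnegative, monotonicity of $F_{nh,\b}$ on $I$ will follow once we show that $\P\bigl(g_{nh,\b}>0\text{ on }I\text{ and }\inf_{v\in I}N_{nh,\b}(v)\ge0\bigr)\to1$.

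The second step is a uniform law of large numbers for these kernel estimates and their first derivatives. Using $\E[\dd\mid T-\b'X=u]=F_\b(u)$ together with Assumptions (A4)--(A5), one obtains, uniformly for $v\in I$ and in probability, as $n\to\infty$ with $h\asymp n^{-1/5}$,
\begin{align*}
g_{nh,\b}(v)&\stackrel{p}{\longrightarrow} f_{T-\b'X}(v), & g_{nh,\b}'(v)&\stackrel{p}{\longrightarrow}f_{T-\b'X}'(v),\\
g_{nh,1,\b}(v)&\stackrel{p}{\longrightarrow} F_\b(v)\,f_{T-\b'X}(v), & g_{nh,1,\b}'(v)&\stackrel{p}{\longrightarrow}\bigl(F_\b\,f_{T-\b'X}\bigr)'(v).
\end{align*}
Here the bias terms are $O(h^2)$ for the level estimates (by the symmetry of $K$ in (K1)) and $o(h)$ for the derivative estimates, while the stochastic terms are of order $O_p\bigl(\sqrt{\log n/(nh)}\bigr)$ for the levels and $O_p\bigl(\sqrt{\log n/(nh^3)}\bigr)$ for the derivatives; with $h\asymp n^{-1/5}$, so that $nh^3\asymp n^{2/5}\to\infty$, all of these tend to zero. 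These bounds follow from the representations $g_{nh,1,\b}(v)=\int\d\,K_h(v-u+\b'y)\,d\P_n(u,y,\d)$ and $g_{nh,\b}(v)=\int K_h(v-u+\b'y)\,d\G_n(u,y)$ and their $v$-derivatives, by a standard maximal-inequality (bracketing) argument over the compact set $I$.

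Inserting the four limits into $N_{nh,\b}$ gives, uniformly on $I$ and in probability,
\begin{align*}
N_{nh,\b}(v)&\longrightarrow\bigl(F_\b'(v)f_{T-\b'X}(v)+F_\b(v)f_{T-\b'X}'(v)\bigr)f_{T-\b'X}(v)\\
&\qquad\qquad-F_\b(v)\,f_{T-\b'X}(v)\,f_{T-\b'X}'(v)\;=\;F_\b'(v)\,f_{T-\b'X}(v)^2 .
\end{align*}
The nuisance term $F_\b(v)f_{T-\b'X}'(v)$ cancels and only the manifestly nonnegative quantity $f_\b(v)f_{T-\b'X}(v)^2$ survives; by the choice of $I$ it is bounded below by some $c_I>0$. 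Hence $\P\bigl(\inf_{v\in I}N_{nh,\b}(v)\ge c_I/2\bigr)\to1$, and combined with $\P(g_{nh,\b}>0\text{ on }I)\to1$ this yields $\P\{F_{nh,\b}\text{ is monotonically increasing on }I\}\to1$. For a general interval $I$ contained in the support of $f_\b$ the assertion follows by applying the above to an exhausting sequence of compact subintervals of the interior of $I$ and invoking the continuity of $F_{nh,\b}$.

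The step I expect to be the main obstacle is the uniform-in-$v$ control of the derivative estimates $g_{nh,\b}'$ and $g_{nh,1,\b}'$ at rates fast enough that their errors vanish under $h\asymp n^{-1/5}$; this is precisely where the bandwidth condition enters, and it is the reason $h$ cannot be taken much smaller. The cancellation leaving the limiting numerator equal to the nonnegative quantity $f_\b\,f_{T-\b'X}^2$ is the conceptual core, but it is immediate once the four convergences are in place. The only genuinely delicate points are those near the boundary of the support of $f_\b$, where $f_\b$ may vanish and the limit of $N_{nh,\b}$ degenerates to zero; these are handled by restricting to compact interior subintervals as above and passing to the closure by continuity.
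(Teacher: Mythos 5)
Your argument is exactly the one the paper intends: the paper offers no proof beyond citing Theorem 3.3 of \cite{piet_geurt_birgit:10}, and that result rests on precisely your computation --- differentiate the Nadaraya--Watson ratio $g_{nh,1,\b}/g_{nh,\b}$ in $v$, establish uniform consistency of the four kernel estimates and their derivatives (with the $nh^3\to\infty$ requirement entering exactly where you say), observe the cancellation of the $F_\b f_{T-\b'X}'$ terms leaving the limiting numerator $f_\b(v)f_{T-\b'X}(v)^2$, and conclude positivity on compacts where $f_\b$ is bounded below. The one soft spot is your final passage from compact interior subintervals to a general $I$ in the support: $\P(\text{monotone on }I_k)\to1$ for each fixed $k$ does not yield $\P(\text{monotone on }\bigcup_k I_k)\to1$ without uniformity in $k$, so the statement should be read (as in the cited reference, and as it is used elsewhere in the paper, where one always restricts to $\{F_\b\in[\ee,1-\ee]\}$ on which $f_\b$ stays away from zero by (A2)) for intervals on which $f_\b$ is bounded below --- and there your proof is complete.
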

The proof of Theorem \ref{th:monotonicity} follows from the asymptotic monotonicity of the plug-in estimate in the classical current status model (without regression parameters) and is proved in the same way as Theorem 3.3 of \cite{piet_geurt_birgit:10}.

\begin{theorem}
\label{th:alternative_MLE-expansion}
Let the conditions of Theorem \ref{th:asymptotic_normality} be satisfied.  Then, for $\hat \b_n$ being the plug-in estimator of $\b_0$,
\begin{align*}
\sqrt{n}(\hat\b_n-\b_0)&= \frac{I_{\ee}(\b_0)^{-1}}{\sqrt n} \sum_{i \in J_{F_0}} f_0(T_i-\b_0'X_i)\{\E(X_i|T_i-\b_0'X_i)-X_i\}
\\
&\qquad\qquad\qquad\qquad\qquad \cdot \frac{\dd_i-F_0(T_i-\b_0' X_i)}{F_0(T_i-\b_0' X_i)\{1-F_0(T_i-\b_0' X_i)\}} +o_p(1).
\end{align*}
where 	$J_H = \{i : \ee \leq H(T_i-\b_0'X_i)\leq 1 -\ee\}$ for some function $H$.
\end{theorem}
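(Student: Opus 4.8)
The plan is to derive the asymptotic linear expansion for $\sqrt n(\hat\b_n-\b_0)$ from the defining equation $\psi_{3,nh}^{(\ee)}(\hat\b_n)=0$ by a Taylor-type argument around $\b_0$, exactly in the spirit of item (iii) in the discussion of Section \ref{section:model}. Concretely, I would first establish (this is essentially the content behind Theorem \ref{th:asymptotic_normality}, whose proof is cited from the supplement) a first-order expansion of the form
\begin{align*}
\psi_{3,nh}^{(\ee)}(\b)=\psi'(\b_0)(\b-\b_0)+W_n+o_p(\|\b-\b_0\|)+o_p(n^{-1/2}),
\end{align*}
valid uniformly over $o_p(1)$-neighborhoods of $\b_0$, where $\psi'(\b_0)=-I_\ee(\b_0)$ is the total derivative of the population score and $W_n=\psi_{3,nh}^{(\ee)}(\b_0)$ is the stochastic leading term. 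Evaluating at $\b=\hat\b_n$ and using consistency (Theorem \ref{th:asymptotic_normality}(ii)) together with the $\sqrt n$-rate gives $\sqrt n(\hat\b_n-\b_0)=I_\ee(\b_0)^{-1}\sqrt n\,W_n+o_p(1)$, so the whole theorem reduces to showing that $\sqrt n\,W_n$ equals the stated empirical sum up to $o_p(1)$.

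The core of the argument is therefore the analysis of $W_n=\psi_{3,nh}^{(\ee)}(\b_0)$. Here I would substitute the plug-in estimate $F_{nh,\b_0}$ and its derivative (\ref{partial_derivative}) into (\ref{score_equation}) at $\b=\b_0$ and linearize around the population quantities. Write $F_{nh,\b_0}=F_0+(F_{nh,\b_0}-F_0)$, and similarly replace $g_{nh,1,\b_0}/g_{nh,\b_0}$ and $\partial_\b F_{nh,\b_0}$ by their deterministic limits $F_0$ and $f_0(u)\E\{X-x\mid T-\b_0'X=u\}$ (using Remark \ref{remark:Hoeffding}). The leading ``plug-in at the truth'' term is precisely
\begin{align*}
\frac1{\sqrt n}\sum_{i\in J_{F_0}}f_0(T_i-\b_0'X_i)\{\E(X_i\mid T_i-\b_0'X_i)-X_i\}\frac{\dd_i-F_0(T_i-\b_0'X_i)}{F_0(T_i-\b_0'X_i)\{1-F_0(T_i-\b_0'X_i)\}},
\end{align*}
which is the efficient-score empirical process $\sqrt n\,\P_n\tilde\ell_{\b_0,F_0}$ restricted to the truncation set. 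The remaining terms are: (a) the bias contributed by the kernel smoothing in $F_{nh,\b_0}$ and in $f_{nh}$, which is $O(h^2)$ and hence $o(n^{-1/2})$ since $h\asymp n^{-1/5}$; (b) a ``drift'' term coming from $F_{nh,\b_0}-F_0$ appearing inside the $\{\dd-F\}$ factor and inside the $F(1-F)$ denominator — this is a second-order $U$-statistic/degenerate-empirical-process remainder that must be shown to be $o_p(n^{-1/2})$ by a rate bound $\|F_{nh,\b_0}-F_0\|=O_p((nh)^{-1/2}+h^2)$ combined with the empirical-process (Donsker) machinery on the truncated region where $F_0$ is bounded away from $0$ and $1$; and (c) the contribution of the random truncation boundaries $F_{nh,\b_0}^{-1}(\ee)$ versus $F_0^{-1}(\ee)$, which is negligible because the integrand is bounded there and the boundary fluctuates at rate $o_p(1)$.

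The main obstacle I expect is step (b): controlling the cross terms between the nonparametric fluctuation $F_{nh,\b_0}-F_0$ (a kernel/Nadaraya–Watson remainder) and the empirical measure $\P_n-P_0$, and showing these are genuinely $o_p(n^{-1/2})$ rather than merely $O_p(n^{-1/2})$. This requires a careful decomposition into a deterministic-bias piece, a linear-in-$(\P_n-P_0)$ piece that cancels or is absorbed, and a quadratic degenerate piece, together with equicontinuity/Donsker arguments that rely essentially on Assumptions (A1)–(A5), the compact support of $(T,X)$, the smoothness of $F_\b$, and the truncation keeping everything bounded away from the endpoints. Once these remainders are dispatched, identifying $\sqrt n\,W_n$ with the displayed sum is immediate, and combining with $\sqrt n(\hat\b_n-\b_0)=I_\ee(\b_0)^{-1}\sqrt n\,W_n+o_p(1)$ yields the claim; the asymptotic normality in Theorem \ref{th:asymptotic_normality}(iii) then follows as a corollary by the ordinary CLT applied to this i.i.d.\ sum, whose variance is $I_\ee(\b_0)$.
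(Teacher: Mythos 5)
Your overall skeleton is the right one and matches the paper's: the theorem is obtained as a by-product of the expansion $\psi_{3,nh}^{(\ee)}(\hat\b_n)=\sqrt n^{-1}\sum_{i\in J_{F_0}}\tilde\ell_{\b_0,F_0}(T_i,X_i,\dd_i)/\sqrt n+\psi_{3,\ee}'(\b_0)(\hat\b_n-\b_0)+o_p(n^{-1/2}+\hat\b_n-\b_0)$ with $\psi_{3,\ee}'(\b_0)=-I_{\ee}(\b_0)$, established in the proof of Theorem \ref{th:asymptotic_normality}, combined with $\psi_{3,nh}^{(\ee)}(\hat\b_n)=0$ and consistency. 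The paper proves exactly this and then states Theorem \ref{th:alternative_MLE-expansion} as an immediate consequence.

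However, your remainder analysis contains a genuine gap in step (a). With $h\asymp n^{-1/5}$ the kernel bias is $O(h^2)=O(n^{-2/5})$, and $n^{-2/5}$ is \emph{not} $o(n^{-1/2})$ — it is of larger order. So the systematic part of $F_{nh,\b}-F_{\b}$ (and of $\partial_\b F_{nh,\b}-\partial_\b F_{\b}$) cannot be dismissed by bandwidth arithmetic, and the same objection applies to the piece of your step (b) in which $F_{nh,\b}-F_{\b}$ is integrated against $dP_0$: that term is not a degenerate remainder but carries the full $O(h^2)$ bias. The mechanism that actually rescues the rate in the paper is the conditional centering of the weight: at $\b=\b_0$ the weight is $\{\E(X\mid T-\b_0'X=u)-x\}f_0(u)/[F_0(u)\{1-F_0(u)\}]$, and since $F_{\b}(u)-F_{nh,\b}(u)$ depends on $(t,x)$ only through $u=t-\b'x$, integrating over $x$ conditionally on $T-\b'X=u$ annihilates the $dP_0$-contribution of this term (this is how the term $I_b$ is killed in the supplement); the remaining pieces are then either products of two $L_2$-small factors handled by Cauchy--Schwarz and Lemma \ref{lemma:distance_F{nh}-F_0}, or integrals against $d(\P_n-P_0)$ handled by the equicontinuity lemma. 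Without invoking this centering, your argument would force undersmoothing $h\ll n^{-1/4}$ (as the paper indeed must do for the intercept, whose weight is not centered), contradicting the stated bandwidth $h\asymp n^{-1/5}$. You should replace the claim ``$O(h^2)=o(n^{-1/2})$'' by the conditional-centering cancellation.
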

The representation of Theorem \ref{th:alternative_MLE-expansion} plays an important role in determining the variance of smooth functionals, of which the intercept $\a=\int u\,d F_0(u)$  is an example. The proof of Theorem \ref{th:alternative_MLE-expansion} is given in the supplemental article \cite{GroeneboomHendrickx-supplement}. A similar representation holds for the estimators defined in Theorem \ref{theorem:method1} and Theorem \ref{theorem:method2} (see the proofs of Theorem \ref{theorem:method1} and \ref{theorem:method2} respectively given in the supplemental article).

\begin{remark} {\rm
		The plug-in method also suggests the use of U-statistics. By straightforward calculations, we can write the score function defined in (\ref{score_equation}) as
		\begin{align}
		\label{U-statistic}
		&\psi_{3,nh}^{(\ee)}(\b)  \nonumber\\
		&= \frac1{n^2}\sum_{i \in J_{F_{nh,\b}}}\frac{\frac{\partial}{\partial\b}F_{nh,\b}(T_i-\b' X_i)\bigl\{\dd_i-F_{nh, \b}(T_i-\b' X_i)\bigr\}}{F_{nh,\b}(T_i-\b' X_i)\{1-F_{nh,\b}(T_i-\b' X_i)\}}\nonumber
		\end{align}
		\begin{align}
		&=  \frac1{n^2}\sum_{i \in J_{F_{nh,\b}}}\sum_{j\ne i}\frac{\dd_i\dd_j(X_j-X_i) K_h'(T_i-\b' X_i -T_j+\b' X_j)}{g_{nh,1,\b}(T_i-\b' X_i)} \nonumber\\
		&\quad+\frac1{n^2}\sum_{i \in J_{F_{nh,\b}}}\sum_{j\ne i}\frac{(1-\dd_i)(1-\dd_j)(X_j-X_i) K_h'(T_i-\b' X_i -T_j+\b' X_j)}{g_{nh,0,\b}(T_i-\b' X_i)}\nonumber\\
		& \quad-\frac1{n^2}\sum_{i \in J_{F_{nh,\b}}}\sum_{j\ne i}\frac{(X_j-X_i) K_h'(T_i-\b' X_i -T_j+\b' X_j)}{g_{nh,\b}(T_i-\b' X_i)} 
		\end{align}
		where $g_{nh,0,\b}=g_{nh,\b}-g_{nh,1,\b}$, see (\ref{g1}) and (\ref{g}).
		Each of the three terms on the right-hand side of (\ref{U-statistic}) can be rewritten in terms of a scaled second order U-statistics. A proof based on U-statistics requires lengthy and tedious calculations which are avoided in the current approach for proving Theorem \ref{th:asymptotic_normality}.  The representation given in Theorem \ref{th:alternative_MLE-expansion} also indicates that the U-statistics representation does not give the most natural approach to the proof of asymptotic normality and efficiency of $\hat\b_n$. For these reasons, we do not further examine the results on U-statistics.
	}
\end{remark}
\begin{remark}
	{\rm We propose the bandwidths $h\asymp n^{-1/7}$ resp. $h\asymp n^{-1/5}$ in Theorem \ref{theorem:method2} resp. Theorem \ref{th:asymptotic_normality}, which are the usual bandwidths with ordinary second order kernels for the estimates of a density resp. distribution function. 
		Unfortunately, various advices are given in the literature on what smoothing parameters one should use.  \cite{klein_spady:93} has fourth order kernels and uses bandwidths between the orders $n^{-1/6}$ and $n^{-1/8}$ for the estimation of $F$. Note that the use of fourth order kernels needs the associated functions to have four derivatives in order to have the desired bias reduction. \cite{cosslett:07}  advises a bandwidth $h$ such that $n^{-1/5}\ll h\ll n^{-1/8}$, excluding the choice $h\asymp n^{-1/5}$. Both ranges are considerably large and exclude our bandwidth choice $h\asymp n^{-1/5}$. 	\cite{murphy:99} considers a penalized maximum likelihood estimator where the penalty parameter $\l_n$ satisfies $
		1/\l_n=O_p\left(n^{2/5}\right)$ and $\l_n^2=o_p\left(n^{-1/2}\right)$. 
		Translated into bandwidth choice (using $h_n\asymp\sqrt{\l_n}$), the conditions correspond to: $	n^{-1/5}\lesssim h\ll n^{-1/8}$, suggesting that their conditions do allow the choice $h\asymp n^{-1/5}$ for estimating the distribution function. 
	}
\end{remark}
\subsubsection{Road map of the proof  of Theorem \ref{th:asymptotic_normality}}
\label{subsec:roadmap}
The older proofs of a result of this type always used second derivative calculations. As convincingly argued in \cite{vaart:98}, proofs of this type should only use first derivatives and that is indeed what we do. The limit function $F_\b$ of the estimates for $F_0$ when $\b \ne \b_0$ plays a crucial role in our proofs. 
We first prove the consistency of the plug-in estimate $\hat \b_n$.
Next, we use a Donsker property for the functions representing the score function and prove that the integral w.r.t.\ $d\P_n$ of this score function is
$$
o_p\left(n^{-1/2}+\hat\b_n-\b_0\right),
$$
and that the integral w.r.t.\ $dP_0$ is asymptotically equivalent to
$$
-(\hat\b_n-\b_0)I_{\ee}(\b_0),
$$
where $I_{\ee}(\b_0)$ is the generalized Fisher information, given by (\ref{I(beta_0)}). Combining these results gives Theorem \ref{th:asymptotic_normality}. Very essential in this proof are $L_2$-bounds on the distance between the functions  $F_{nh,\b}$ to its limit $F_\b$ for fixed $\b$ and  on the $L_2$-distance between the first partial derivatives $\partial_\b F_{nh,\b}$ and $\partial_\b F_{\b}$. If the bandwidth $h\asymp n^{-1/5}$, the first $L_2$-distance is of order $n^{-2/5}$ and the second distance is of order $n^{-1/5}$, allowing us to use the Cauchy-Schwarz inequality on these components. Here we use a result in \cite{nickl:15} on $L_2$ bounds of derivatives of kernel density estimates.

In Section \ref{section:intercept} we discuss the estimation of an efficient estimate of the intercept term in regression model (\ref{model}) using the plug-in estimates $\hat \b_n$ and $F_{nh,\hat \b_n}$.

\subsection{Truncation}
We introduced a truncation device in order to avoid unbounded score functions and numerical difficulties. If one starts with the {\it efficient} score equation or an estimate thereof, the solution sometimes suggested in the literature, is to add a constant $c_n$, tending to zero as $n\to\infty$, to the factor $F(t-\b' x)\{1-F(t-\b' x)\}$ which inevitably will appear in the denominator. This is done in, e.g. \cite{zhang:98}; similar ideas involving a sequence $(c_n)$ are used in \cite{klein_spady:93} and \cite{cosslett:07}. 
	
In contrast with the usual approaches to truncation, which imply the selection of a suitable sequence $c_n$, we do not consider a vanishing truncation sequence but work with a subsample of the data depending on the $\ee$ and $(1-\ee)$ quantiles of the distribution function estimate for small but fixed $\ee \in (0,1/2)$. This simple device in (\ref{truncated_likelihood}) moreover implies keeping the characterizing properties of the MLE (see Proposition 1.1 on p.\,39 of \cite{GrWe:92}) which are lost when a vanishing sequence is considered. It is perhaps somewhat remarkable that we can, instead of letting $\ee\downarrow0$, fix $\ee>0$ and still have consistency of our estimators; on the other hand, the estimate proposed by \cite{murphy:99} is also identified via a subset of the support of the distribution $F_0$.

Although the truncation area depends on $\b$, we show in the supplemental article \cite{GroeneboomHendrickx-supplement} (see the proof of Theorem \ref{theorem:method1}) that the population version of the score function, given by
\begin{align}
\label{population}
\psi_\ee(\b) = \int_{F_\b(t-\b'x) \in [\ee,1-\ee]} \f(t,x,\d)  \{\d - F_\b(t-\b'x)\}dP_0(t,x,\d),
\end{align}
has a derivative at $\b = \b_0$ that only involves the derivative of the integrand in (\ref{population}), but does not involve terms arising from the truncation limits appearing in the integral. Using the truncation in the argmax maximum log likelihood approach would not lead to a derivative of the population version of the log likelihood which ignores the boundaries and therefore this truncation is less suited for argmax estimators. 

A drawback of our fixed truncation parameter approach is that we get a truncated Fisher information. The resulting estimates are therefore not efficient in the classical sense of efficiency but the difference between the efficient variance and almost (determined by the size of $\ee$) efficient variance is rather small in our simulation models. We also tried to program the fully efficient estimators proposed by \cite{zhang:98} and compared its performance to the performance of our almost-efficient estimators. The comparison showed that our estimates perform better in finite samples. Moreover, the estimates by \cite{zhang:98} involve several kernel density estimates, resulting in a very large computation time compared to our simple estimates (involving 5 double summations over the data points).

Moreover, the usual conditions in the theory of estimation of $F_0$ under current status and, more generally, interval censored data are that $F_0$ corresponds to a distribution with compact support. Otherwise, certain variances easily get infinite, and similarly, the Fisher information in our model can easily become infinite. Truncating by keeping the quantiles between $\ee$ and $1-\ee$ avoids difficulties in this case and allows us to apply the theory which presently has been developed for the current status model.

Note that the score function defined in (\ref{score1}) does not contain a factor $F(t-\b' x)$ or $1-F(t-\b' x)$ in the denominator. For simplicity of the proofs, we still impose the truncation area, since the classical results for the current status model are derived under the assumption that the density $f_0$ is bounded away from zero . We conjecture however that the result of Theorem \ref{theorem:method1} remains valid when taking $\ee = 0$.

\section{Estimation of the intercept}
\label{section:intercept}
We want to estimate the intercept
\begin{equation}
\label{def_intercept}
\a=\int u\,d F_0(u).
\end{equation}
We can take the plug-in estimate $\hat\b_n$ of $\b_0$, by using a bandwidth of order $n^{-1/5}$ and the score procedure, as before.
However, in estimating $\a$, as defined by (\ref{def_intercept}), we have to estimate $F_0$ with a smaller bandwidth $h$, satisfying $h\ll n^{-1/4}$ to avoid bias, for example $h\asymp n^{-1/3}$. The matter is discussed in \cite{cosslett:07}, p.\ 1253.

We have the following result of which the proof can be found in the supplemental article \cite{GroeneboomHendrickx-supplement}.

\begin{theorem}
	\label{th:intercept}
	Let the conditions of Theorem \ref{th:asymptotic_normality} be satisfied, and let $\hat\b_n$ be the $k$-dimensional estimate of $\b_0$ as obtained by the score procedure, described in Theorem \ref{th:asymptotic_normality}, using a bandwidth of order $n^{-1/5}$. Let $ F_{nh,\hat\b_n}$ be a plug-in estimate of $F_0$, using $\hat\b_n$ as the estimate of $\b_0$, but using a bandwidth $h$ of order $n^{-1/3}$ instead of $n^{-1/5}$. Finally, let $\hat\a_n$ be the estimate of $\a$, defined by
	$$
	\int u\,dF_{nh,\hat\b_n}(u).
	$$
	Then $\sqrt{n}(\hat\a_n-\a)$ is asymptotically normal, with expectation zero and variance
	\begin{align}
	\label{intercept_variance}
	\s^2\stackrel{\text{\small def}}=a(\b_0)'I_{\ee}(\b_0)^{-1}\,a(\b_0)+\int\frac{F_0(v)\{1-F_0(v)\}}{f_{T-\b_0'X}(v)}\,dv,
	\end{align}
	where $a(\b_0)$ is the $k$-dimensional vector, defined by
	\begin{align*}
	\label{covar_vector1}
	a(\b_0)=\int \E\{X|T-\b_0'X=u\}f_0(u)\,du,
	\end{align*}
	and $I_{\ee}(\b_0)$ is defined in (\ref{I(beta_0)}).
\end{theorem}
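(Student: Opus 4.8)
The plan is to linearize $\sqrt n(\hat\a_n-\a)$ as a single i.i.d.\ average and then read off its variance. Since by (A5) the pair $(T,X)$ has compact support, for $n$ large the plug-in estimate $F_{nh,\hat\b_n}$ (computed with $h\asymp n^{-1/3}$) equals $0$ to the left and $1$ to the right of a fixed compact interval, so $F_{nh,\hat\b_n}-F_0$ has compact support and integration by parts gives, with no boundary term,
\[
\hat\a_n-\a=\int u\,d\bigl\{F_{nh,\hat\b_n}-F_0\bigr\}(u)=-\int\bigl\{F_{nh,\hat\b_n}(u)-F_0(u)\bigr\}\,du .
\]
I then split $F_{nh,\hat\b_n}-F_0=\bigl(F_{nh,\hat\b_n}-F_{nh,\b_0}\bigr)+\bigl(F_{nh,\b_0}-F_0\bigr)$. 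For the second piece I treat $F_{nh,\b_0}=g_{nh,1,\b_0}/g_{nh,\b_0}$ as the Nadaraya--Watson estimate in the ordinary current status model with observation times $T-\b_0'X$, use $g_{nh,\b_0}(u)\to f_{T-\b_0'X}(u)$ to linearize the ratio, and, using $\int K_h=1$ together with the localization of $F_0(u)$ and $f_{T-\b_0'X}(u)$ to the points $T_j-\b_0'X_j$, obtain
\[
-\int\bigl\{F_{nh,\b_0}(u)-F_0(u)\bigr\}\,du=-\frac1n\sum_{i=1}^n\frac{\dd_i-F_0(T_i-\b_0'X_i)}{f_{T-\b_0'X}(T_i-\b_0'X_i)}+o_p\!\left(n^{-1/2}\right).
\]
The bandwidth $h\asymp n^{-1/3}$ is crucial here: it makes the kernel smoothing bias $O(h^2)=o(n^{-1/2})$ and the ratio-linearization and localization remainders of smaller order; this is exactly why $h\ll n^{-1/4}$ is needed (cf.\ \cite{cosslett:07}, p.\ 1253).

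For the first piece I use a mean value expansion $F_{nh,\hat\b_n}(u)-F_{nh,\b_0}(u)=\bigl\{\partial_\b F_{nh,\tilde\b_n}(u)\bigr\}'(\hat\b_n-\b_0)$ with $\tilde\b_n$ on the segment between $\b_0$ and $\hat\b_n$, the partial derivative being the quotient-rule analogue of (\ref{partial_derivative}). By Theorem \ref{th:asymptotic_normality}(ii) $\tilde\b_n\to\b_0$, and an empirical-process argument, uniform over an $o_p(1)$ neighbourhood of $\b_0$, gives $\int\partial_\b F_{nh,\b}(u)\,du\to\int\partial_\b F_\b(u)\,du$; evaluating the limit at $\b_0$, where differentiating the identity $\int f_{X|T-\b'X}(x|u)\,dx\equiv1$ annihilates one term,
\[
\int\partial_\b F_\b(u)\,du\Bigm|_{\b=\b_0}=\int f_0(u)\,\E\{X\mid T-\b_0'X=u\}\,du=a(\b_0).
\]
Together with $\hat\b_n-\b_0=O_p(n^{-1/2})$ (so the quadratic term of the expansion is $o_p(n^{-1/2})$) this yields $-\int\bigl\{F_{nh,\hat\b_n}(u)-F_{nh,\b_0}(u)\bigr\}\,du=-a(\b_0)'(\hat\b_n-\b_0)+o_p(n^{-1/2})$, into which I substitute the asymptotic representation of $\hat\b_n-\b_0$ from Theorem \ref{th:alternative_MLE-expansion}.

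Adding the two pieces gives $\sqrt n(\hat\a_n-\a)=n^{-1/2}\sum_{i=1}^n\psi(T_i,X_i,\dd_i)+o_p(1)$ with
\begin{align*}
\psi(t,x,\d)&=-a(\b_0)'I_{\ee}(\b_0)^{-1}\,1_{\{F_0(t-\b_0'x)\in[\ee,1-\ee]\}}\,f_0(t-\b_0'x)\bigl\{\E(X\mid T-\b_0'X=t-\b_0'x)-x\bigr\}\,\frac{\d-F_0(t-\b_0'x)}{F_0(t-\b_0'x)\{1-F_0(t-\b_0'x)\}}\\
&\qquad-\frac{\d-F_0(t-\b_0'x)}{f_{T-\b_0'X}(t-\b_0'x)},
\end{align*}
which has mean zero since $\E[\dd-F_0(T-\b_0'X)\mid T,X]=0$, so the classical CLT applies. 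Computing $\E\psi^2$ splits into three terms: the quadratic term in the first summand equals $a(\b_0)'I_{\ee}(\b_0)^{-1}a(\b_0)$, because the covariance matrix of the truncated efficient score $1_{\{F_0(\cdot)\in[\ee,1-\ee]\}}f_0\{\E(X\mid\cdot)-X\}(\d-F_0)/[F_0(1-F_0)]$ is exactly $I_{\ee}(\b_0)$ of (\ref{I(beta_0)}) (same computation as in the Appendix); the square of the second summand equals $\int F_0(v)\{1-F_0(v)\}/f_{T-\b_0'X}(v)\,dv$; and the cross term vanishes because conditioning on $T-\b_0'X=v$ reduces it to $\E[\E(X\mid v)-X\mid v]=0$. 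This produces the variance $\s^2$ of (\ref{intercept_variance}).

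The step I expect to be the main obstacle is the first piece with the small bandwidth $h\asymp n^{-1/3}$: since then $K_h'$ is of order $h^{-2}$ and the one-dimensional averages $g_{nh,\b}$, $g_{nh,1,\b}$ converge only at rate $(nh)^{-1/2}$, one must show that the functional $\b\mapsto\int\partial_\b F_{nh,\b}(u)\,du$ is nonetheless uniformly within $o_p(1)$ of its deterministic limit $a(\b_0)$ over a shrinking neighbourhood of $\b_0$ and that the quadratic remainder of the mean value expansion is $o_p(n^{-1/2})$. This requires Donsker-type control of the relevant function classes and, near the edges of the support of $F_0$ where $f_{T-\b'X}$ need not be bounded away from $0$, use of the compact-support assumption (A5) and the asymptotic monotonicity of $F_{nh,\b}$ from Theorem \ref{th:monotonicity}; finiteness of $\s^2$ (in particular that the support of $\e$ is covered by that of $T-\b_0'X$, with $f_{T-\b_0'X}$ not too small there) enters here as well. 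The Nadaraya--Watson ratio linearization in the second piece, while more routine, must also be handled carefully because of the random denominator.
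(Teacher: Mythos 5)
Your proposal reaches the correct influence function and variance, and the endgame (substituting Theorem \ref{th:alternative_MLE-expansion}, checking the cross term vanishes by conditioning on $T-\b_0'X$) matches the paper. But your intermediate decomposition is genuinely different, and the difference matters. The paper splits
$F_0(t-\hat\b_n'x)-F_{nh,\hat\b_n}(t-\hat\b_n'x)$ into $\bigl\{F_0(t-\hat\b_n'x)-F_0(t-\b_0'x)\bigr\}+\bigl\{F_0(t-\b_0'x)-F_{nh,\hat\b_n}(t-\hat\b_n'x)\bigr\}$ (after rewriting the $du$-integral as a $dG$-integral weighted by $1/f_{T-\hat\b_n'X}$). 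The first piece involves only the \emph{population} $F_0$, so the $\b$-derivative that produces $a(\b_0)'(\hat\b_n-\b_0)$ is a derivative of a smooth deterministic function; the second piece is handled by writing $F_{nh,\hat\b_n}(t-\hat\b_n'x)-F_0(t-\b_0'x)$ explicitly as a kernel average of $\d-F_0(t-\b_0'x)$ and splitting into $dP_0$ and $d(\P_n-P_0)$ parts, so the estimated $\b$ only ever appears inside the kernel argument and never gets differentiated. You instead split as $(F_{nh,\hat\b_n}-F_{nh,\b_0})+(F_{nh,\b_0}-F_0)$ and use a mean-value expansion in $\b$ of the \emph{random} function $F_{nh,\b}$, which forces you to control $\int\partial_\b F_{nh,\tilde\b_n}(u)\,du$ at the small bandwidth $h\asymp n^{-1/3}$.

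That last step is the one real gap, and you correctly sense it but do not close it. The available $L_2$ bound for the derivative (Lemma \ref{lemma:distance_F{nh}-F_0}) is $O_p\bigl((nh^3)^{-1}\bigr)+O_p(h^2)$, which at $h\asymp n^{-1/3}$ is $O_p(1)$: pointwise and in $L_2$, $\partial_\b F_{nh,\b}$ does \emph{not} converge to $\partial_\b F_\b$ at this bandwidth, because $K_h'$ is of order $h^{-2}$. Your claim that $\int\partial_\b F_{nh,\b}(u)\,du\to a(\b_0)$ uniformly over a shrinking neighbourhood therefore cannot be obtained from the Cauchy--Schwarz/entropy machinery used elsewhere in the paper; it would need a separate argument exploiting that integration over $u$ averages the noise (e.g.\ a Fubini/U-statistic computation of the integrated derivative), plus a bound on the second-order remainder of the mean-value expansion. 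The paper's decomposition avoids all of this, which is presumably why it was chosen. If you keep your route, you must supply that argument; otherwise, switch the order of approximation so that only $F_0$ (not $F_{nh,\b}$) is ever differentiated in $\b$.
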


\begin{remark}
	{\rm We choose the bandwidth of order $n^{-1/3}$ for specificity, but other choices are also possible. We can in fact choose $n^{-1/2}\ll h\ll n^{-1/4}$. The bandwidth of order $n^{-1/3}$ corresponds to the automatic bandwidth choice of the MLE of $F_0$, also using the estimate $\hat\b_n$ of $\b_0$.
	}
\end{remark}

\begin{remark}
	{\rm
		Note that the variance corresponds to the information lower bound for smooth functionals in the binary choice model, given in \cite{cosslett:07}. The second part of the expression for the variance on the right-hand side of (\ref{intercept_variance}) is familiar from current status theory,  see e.g. (10.7), p.\ 287 of \cite{piet_geurt:14}.
	}
\end{remark}
Instead of considering the plug-in estimate, we could also consider the estimates described in Theorem \ref{theorem:method1} and Theorem \ref{theorem:method2}. After having determined an estimate $\hat\b_n$ in this way, we next estimate $\a$ by
\begin{equation}
\label{hat_alpha}
\hat\a_n=\int x\,d\hat F_{n,\hat\b_n}(x),
\end{equation}
where $\hat F_{n,\hat\b_n}$ is the MLE corresponding to the estimate $\hat\b_n$. The theoretical justification of this approach can be proved using the asymptotic theory of smooth functionals given in \cite{piet_geurt:14}, p.286. Using the MLE $\hat F_{n,\hat \b_n}$ instead of the plug-in  $F_{nh,\hat \b_n}$ as an estimate of the distribution function $F_0$, avoids the selection of a bandwidth parameter for the intercept estimate. We discuss in the next section how the bandwidth can be selected by the practitioner in a real data sample.

\section{Computation and simulation}
\label{section:computation}

The computation of our estimates is relatively straightforward in all cases. 
For the score-estimates defined in Sections \ref{subsection:MLE} and \ref{subsection:MLE2},  we first compute the MLE for fixed $\b$ by the so-called ``pool adjacent violators'' algorithm for computing the convex minorant of the ``cusum diagram'' defined in (\ref{cusum}). If the MLE has been computed for fixed $\b$, we can compute the density estimate $f_{nh}$.   The estimate of $\b_0$ is then determined by a root-finding algorithm such as Brent's method. Computation is very fast.  For the plug-in estimate, we simply compute the estimate $F_{nh,\b}$ as a ratio of two kernel estimators for fixed $\b$ and then compute the derivative w.r.t. $\b$. Next we use again a root-finding algorithm to determine the zero of the corresponding score function.

Some results from the simulations of our model are available in Table \ref{table:beta_estimation}, which contains the mean value of the estimate, averaged over $N = 10,000$ iterations, and $n$ times the variance of the estimate of $\b_0 = 0.5$ (resp. $\alpha_0 = 0.5$)  for the different methods described above, as well as for the classical MLE of $\b_0$,  for different sample sizes $n$ and a truncation parameters $\ee=0.001$. 
We took the bandwidth $h=0.5n^{-1/7}$ for the efficient score-estimate of Section \ref{subsection:MLE2}. The bandwidth $h=0.5n^{-1/5}$ for the plug-in estimate of Section \ref{subsection:plugin} was chosen based on an investigation of the mean squared error (MSE) for different choices of $c$ in $h=cn^{-1/5}$. Details on how to choose the bandwidth in practice are given in Section \ref{subsec: bandwidth}.  The true asymptotic values for the variance of $\sqrt{n}(\hat\b_n -\b_0)$ in our simulation model, obtained via the inverse of the Fisher information $I_{\ee}(\b_0)$, are 0.151707 without truncation and 0.158699 for $\ee=0.001$ and 0.17596 for $\ee=0.01$.  We advise to use a truncation parameter $\ee$ of 0.001 or smaller in practice.  The variance defined in Theorem \ref{theorem:method1} for $\ee = 0.001$ is 0.193612. The lower bounds for the variance of the intercept are 0.257898 for the simple score method and 0.222984 for the efficient methods. Our results show slow convergence to these bounds.

Table \ref{table:beta_estimation} shows that the efficient score and the efficient plug-in methods perform reasonably well. A drawback of the plug-in method however is the long computing time for large sample sizes, whereas the computation for the MLE is fast even for the larger samples. Note  moreover that the plug-in estimate is only asymptotically monotone whereas the MLE is monotone by definition. All our proposed estimates perform better than the classical MLE, the log likelihood for the MLE has moreover a rough behavior, with a larger chance that optimization algorithms might calculate a local maximizer instead of the global maximizer. 

The performance of the score estimates is worse than the performance of the plug-in estimates for small sample sizes but increases considerably when the sample size increases. Although the asymptotic variance of the first score-estimator of Section \ref{subsection:MLE} is larger than the (almost, determined by the truncation parameter $\ee$) efficient variance, the results obtained with this method are noteworthy seen the fact that no smoothing is involved in this simple estimation technique.

Table \ref{table:beta_estimation} does not provide strong evidence of the $\sqrt{n}$-consistency of the classical MLE, but we conjecture that the MLE is indeed $\sqrt{n}$-consistent but not efficient. 
Considering the drawbacks of the classical MLE, we advise the use of the plug-in estimate for small sample sizes and the use of the score estimates, based on the MLE, for larger sample sizes, for estimating the parameter $\b_0$. We finally suggest to estimate the parameter $\a_0$ via the MLE corresponding to this $\b_0$-estimate, avoiding in this way the bias problem for the kernel estimates of $\a_0$.

\begin{table}[!ht]
	\centering
	\caption{\rm The mean value of the estimate and $n$ times the variance of the estimates of $\b_0$ and $\a_0$ for different methods, $h_{\b} = 0.5n^{-1/7}$  (for the efficient score method), $h_{\b}=0.5n^{-1/5}$ and $h_{\a} =0.75n^{-1/3}$  (for the plug-in method), $\ee= 0.001$ and $N = 10000$.}
	\label{table:beta_estimation}
	\vspace{0.5cm}
	\scalebox{0.93}{
	\begin{tabular}{|ll|cc|cc| cc| cc|}
		\hline
		&& \multicolumn{2}{|c|}{Score-1} & \multicolumn{2}{c|}{Score-2}  & \multicolumn{2}{c|}{ Plugin}  &\multicolumn{2}{c|}{MLE}\\
		\cline{3-10}
 		 &$n$  & mean & $n\times$var  & mean &$n\times$var  & mean & $n\times$var & mean & $n\times$var \\
		\hline
		$\b$ &$100$ & 0.500212 & 0.364558 & 0.502247 & 0.410449 &  0.499562& 0.245172  & 0.489690 & 0.307961\\
		 & 500  & 0.499845 & 0.221484 & 0.499825 & 0.230178 & 0.498857 & 0.191857& 0.499315 & 0.228335\\
		 & 1000 & 0.499982 & 0.211608 & 0.500353 & 0.208102 & 0.499502 & 0.192223& 0.499937 & 0.228420\\
	     & 5000 & 0.499901 & 0.195294 & 0.499964 & 0.184807 & 0.500314 & 0.181421 & 0.499933 & 0.239898\\
		 & 10000 & 0.499988& 0.191115 & 0.499985 & 0.172758 & 0.500120 & 0.172043& 0.499994 & 0.227222\\
		 & 20000 & 0.500038 &  0.187616 & 0.500023 & 0.169762 & 0.500096 & 0.174197& 0.499952 & 0.238400\\
		\hline
		$\a$ &$100$ & 0.511937 & 0.468415& 0.509679 & 0.515638 & 0.495709 & 0.332949& 0.523103 & 0.425614 \\
		& 500  & 0.502258 & 0.293585 & 0.502506 & 0.287576 & 0.498932 & 0.254040& 0.502514 & 0.304540\\
		& 1000  & 0.500839 & 0.284958 & 0.500616 & 0.262684 & 0.498385 & 0.270085& 0.500937 & 0.300201\\
		& 5000  & 0.500345 & 0.262566 & 0.500316 & 0.244892 & 0.501597 & 0.241294 & 0.500270 & 0.303754\\
		& 10000 & 0.500127 & 0.256983 & 0.500134 & 0.232973 & 0.501680 & 0.245993 & 0.500076 & 0.289905\\
		& 20000 & 0.500020 & 0.250720 & 0.500042 & 0.230901 & 0.501660 & 0.244042& 0.500101 & 0.302824\\
		\hline
	\end{tabular}}
\end{table}

\subsection{Bandwidth selection}
\label{subsec: bandwidth}
In this section we discuss the bandwidth selection for the plug-in estimate. A similar idea can be used for the selection of the bandwidth used for the second estimate defined in Section \ref{subsection:MLE2}. We define the optimal constant $c_{opt}$ in $h=cn^{-1/5}$ as the minimizer of  $MSE$,
\begin{align*}
c_{opt} = \arg \min_c MSE(c) = \arg \min_c E_{\b_0}(\hat\b_{n,h_c} - \b_0)^2 ,
\end{align*}
where $\hat\b_{n,h_c}$ is the estimate obtained when the constant $c$ is chosen in the estimation method. A picture of the Monte Carlo estimate of $MSE$ as a function of $c$ is shown for the plug-in method in Figure \ref{fig:mse_plugin}, where we estimated $MSE(c)$ on a grid $c =$ 0.01, 0.05, 0.10, $\cdots$, 0.95, for a sample size $n=1000$ and truncation parameter $\ee = 0.001$ by a Monte Carlo experiment with $N = 1000$ simulation runs,
\begin{align}
\label{Monte_carlo_MSE}
\widehat{MSE(c)} =N^{-1}\sum_{j=1}^N ( \hat\beta_{n,h_c}^{j} - \beta_0)^2,
\end{align}
where $\hat\beta_{n,h_c}^{j}$ is the estimate of $\b_0$ in the $j$-th simulation run, $j= 1,\ldots,N$. 

Since $F_0$ and $\b_0$ are unknown in practice, we cannot compute the actual $MSE$. We use the bootstrap method proposed by \cite{SenXu2015} to obtain an estimate of $MSE$. Our proposed estimate $F_{nh,\b}$ of the distribution function $F_0$ satisfy the conditions of Theorem 3 in \cite{SenXu2015} and the consistency of the bootstrap is guaranteed. Note that it follows from \cite{kosorok:08} and \cite{sen_mouli_woodroofe:10} that naive bootstrapping, by resampling with replacement $(T_i,X_i,\dd_i)$, or by generating bootstrap samples from the MLE, is inconsistent for reproducing the distribution of the MLE. 

The method works as follows. We let $ h_0=c_0n^{-1/5}$ be an initial choice of the bandwidth and calculate the plug-in estimates $\hat\b_{n,h_0}$ and $ F_{n,h_{c_0}}$ based on the original sample $(T_i, X_i,\dd_i), i = 1,\ldots,n$. We generate a bootstrap sample  $(X_i, T_i, \dd_i^*),i = 1,\ldots,n$ where the $(T_i,X_i)$ correspond to the $(T_i,X_i)$ in the original sample and where the indicator $\dd_i^*$ is generated from a Bernoulli distribution with probability $ F_{n,h_0}(T_i -\hat\b_{n,h_0}X_i)$, and next estimate  $\hat\b_{n,h_c}^*$ from this bootstrap sample. We repeat this $B$ times and estimate $MSE(c)$ by,
\begin{align}
\label{bootstrap_mse}
\widehat{MSE_B(c)} =B^{-1}\sum_{b=1}^B ( \hat\beta_{n,h_c}^{*b} - \hat\beta_{n,h_{c_0}})^2,
\end{align}
where $\hat\beta_{n,h_c}^{*b}$ is the bootstrap estimate in the $b$-th bootstrap run. The optimal bandwidth $\hat h_{opt} = \hat c_{opt}n^{-1/5}$ where $\hat c_{opt}$ is defined as the minimizer of $\widehat{MSE_B(c)}$.

To analyze the behavior of the bootstrap method, we compared the Monte Carlo estimate of $MSE$, defined in (\ref{Monte_carlo_MSE}), (based on $N= 1000$ samples of size $n =1000$)   to the bootstrap $MSE$ defined in (\ref{bootstrap_mse}) (based on a single sample of size $n =1000$) in Figure \ref{fig:mse_plugin}. The figure shows that the Monte Carlo $MSE$ and the bootstrap $MSE$ are in line, which illustrates the consistency of the method. The choice of the initial bandwidth does effect the size of the estimated $MSE$ but not the behavior of the estimate and we conclude that this bootstrap algorithm can be used to select an optimal bandwidth parameter in the described method above.

\begin{figure}[!ht]
	\centering
	\includegraphics[width=0.5\textwidth]{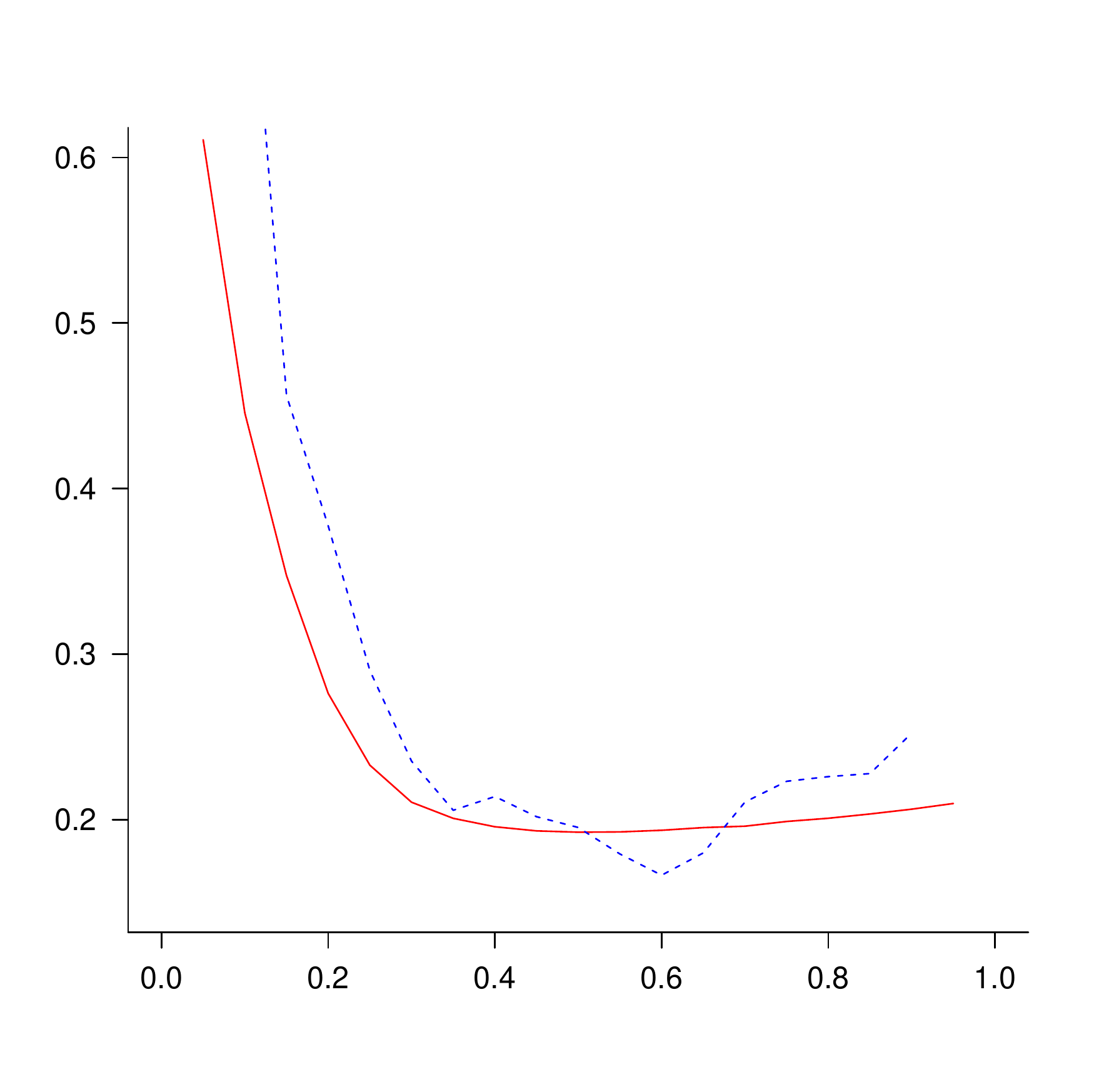}
	\caption{Estimated $MSE(c)$ plot of $\hat\b_n$ obtained from $N= 1000$ Monte Carlo simulations (red, solid) versus the bootstrap $MSE$ for $c_0=0.25$ (blue, dashed)  with $B = 10000$, $n=1000$ and $\ee=0.001$.}
	\label{fig:mse_plugin}
\end{figure}

\section{Discussion}
\label{section:discussion}
In this paper we propose a simple $\sqrt n$-consistent estimate for the finite dimensional regression parameter in the semi-parametric current status linear regression model with unspecified error distribution. The estimate has an asymptotically normal limiting distribution but does not attain the efficiency bounds. We also consider two different methods to obtain $\sqrt n$-consistent and asymptotic normal estimates with an  asymptotic variance that is arbitrarily close to the efficient variance. The first approach uses the MLE for the distribution function $F$ for fixed $\b$, the second approach does not depend on the behavior of this MLE but uses a kernel estimate for the distribution function. 
All proposed estimates are defined as a root of a score function as a function of $\b$.

 We introduced a truncation device to avoid theoretical and numerical difficulties caused by unbounded score functions. The truncation is carried out by considering a subsample of the data depending on the $\ee$ and $1-\ee$ quantiles of the distribution function estimate. Although our estimates do not attain the efficiency bound, the proposed method allows for easy computation of the estimates without the need for selecting a truncation sequence converging to zero. Achieving efficiency at the cost of additional computational complexities associated with smoothing procedures and truncation sequence selection results in only a small asymptotic efficiency gain and does not seem to improve the performance of our simple methods.

The estimates based on the efficient score function depending on the MLE for $F_0$ for fixed $\b$ have a slightly better performance than the estimates based on the smooth score function depending on the plug-in estimates for $F_0$ when the sample size is large. For small samples none of the MLE-based estimates comes out as uniformly best.

\section{Appendix}
\label{section:Appendix}
In this section, we include the derivation of the efficient information bound for the current status linear regression model.  The proofs of the results given in Section \ref{subsection:MLE}, Section \ref{subsection:MLE2} and Section \ref{subsection:plugin} are deferred to the supplemental article \cite{GroeneboomHendrickx-supplement}.

\subsection{Efficient information in the current status linear regression model}
\label{section:bounds}
The density of one observation in the current status linear regression model is,
\begin{align*}
p_{\b,F}(t,x,\d) = F(t-\b'x)^{\d}\left\{1-F(t-\b'x) \right\}^{1-\d}f_{T,X}(t,x).
\end{align*}
We assume that the distribution of $(T,X)$ does not depend on $(\b, F)$ which implies that the relevant part of the log likelihood is given by: 
\begin{align} 
l_n(\b,F)&=\sum_{i=1}^n\left[\dd_i\log F(T_i-\b' X_i)+(1-\dd_i)\log\{1-F(T_i- \b'X_i)\}\right]. \nonumber
\end{align}
If the distribution $F$ is known (parametric case), the information for $\b$ is given by,
\begin{align*}
&I_P(\b) = E\left(\left(\frac{\partial}{\partial \b} \log p_{\b,F}(T,X,\dd)\right)'\left(\frac{\partial}{\partial \b} \log p_{\b,F}(T,X,\dd)\right) \right). 
\end{align*}
Straightforward calculations yield,
\begin{align*}
&I_P(\b)_{ij}=\int \frac{\E(X_iX_j|T-\b'X=u)}{F(u)\{1-F(u)\}}f(u)^2f_{T-\b'X}(u)\,du,
\end{align*} 
where $f=F'$ and where $f_{T-\b'X}$ is the density of $T-\b'X$. When $F$ is unknown, we need to calculate the efficient score function.
Let  $F$ and $P_0$ be the probability measures of $\varepsilon$ and $(T,X,\dd)$ respectively and let $L_2^0(Q)$ be the Hilbert space of square integrable functions $a$ with respect to the measure $dQ$ satisfying $\int a dQ = 0$. The score operator $l_{F}: L_2^0(F) \mapsto L_2^0(P_0)$ is defined by,
\begin{align*}
[l_{F}a](t,x,\d) &= E\bigl(a(\varepsilon) | (T,X,\dd) =(t,x,\d)\bigr) \\
&=  \frac{\d \int_{-\infty}^{t-\b'x}a(s)dF(s)}{F(t-\b'x)} - \frac{(1-\d) \int_{-\infty}^{t-\b'x}a(s)dF(s)}{1-F(t-\b'x)}, 
\end{align*}
with adjoint,
\begin{align*}
[l_F^*b](e) = E\left( b(T,X,\dd)| \varepsilon= e\right). 
\end{align*}
The information for $\b$ in the semi-parametric model is defined by,
\begin{align*}
I(\b) &= E\left(\tilde\ell_{\b,F}(T,X,\dd)' \tilde\ell_{\b,F}(T,X,\dd)\right),
\end{align*} 
where $\tilde\ell_{\b,F}(t,x,\d)$ is the efficient score function defined by,
\begin{align*}
\tilde\ell_{\b,F}(t,x,\d) = \ell_\b(t,x,\d) - [\ell_{F}a_*](t,x,\d),
\end{align*}
where 
\begin{align*}
\ell_\b(t,x,\d) = \frac{\partial}{\partial \b} \log p_{\b,F}(t,x,\d) = \frac{-\d x f(t-\b'x)}{F(t-\b'x)} +	 \frac{(1-\d) x f(t-\b'x)}{1-F(t-\b'x)},
\end{align*}
and $\ell_{F}a_*$ satisfies,
\begin{align}
\label{condition-efficient-a}
\ell_{F}^* \ell_{F}a_* = \ell_{F}^* \ell_\b.
\end{align}
The efficient score $\tilde\ell_{\b,F}$ can be interpreted as the residual of $\ell_\b$ projected in the space spanned by $\ell_{F} a$ for $a \in L_2^0(F)$. Note that, as a consequence of  (\ref{condition-efficient-a}), the efficient information is
\begin{align*}
I(\b) &= E\left(\tilde\ell_{\b,F}(T,X,\dd)' \ell_{\b}(T,X,\dd)\right).
\end{align*} 
To find $a_*$, we have to solve (\ref{condition-efficient-a}),
\begin{align}
\label{fi_condition}
\ell_{F}^* \ell_{F}a_*(e)& = \int _e^{\infty}\frac{\f(u)}{F(u)}f_{T-\b X}(u)\,du-\int_{-\infty}^e \frac{\f(u)}{1-F(u)}f_{T-\b X}(u)\,du \nonumber\\
&=-\int _e^{+\infty}\frac{\E(X|T-\b X=u)f(u)}{1-F(u)}f_{T-\b X}(u)\,du\nonumber \\
&\qquad+\int_{-\infty}^e\frac{\E(X|T-\b X=u)f(u)}{1-F(u)}f_{T-\b X}(u)\,du\nonumber\\
&= \ell_{F}^* \ell_\b(e),
\end{align}
where $\f(t) = \int_{-\infty}^{t}a(s)dF(s)$. Equation (\ref{fi_condition}) is satisfied with 
\begin{align*}
\f(u) = -\E(X|T-\b X=u)f(u).
\end{align*}
Any $a_*$ that satisfies the above equation satisfies (\ref{condition-efficient-a}) and we get,
\begin{align*}
&\tilde\ell_{\b,F}(t,x,\d) = \\
&\quad \left\{E(X|T-\b'X=t-\b' x)-x\right\}f(t-\b' x)\left\{\frac{\d}{F(t-\b' x)}-\frac{1-\d}{1-F(t-\b' x)}\right\},
\end{align*}
and,
\begin{equation}
I(\b)_{ij}=\int \frac{\text{\rm Cov}(X_i,X_j|T-\b'X=u)}{F(u)\{1-F(u)\}}f(u)^2f_{T-\b'X}(u)\,du.
\end{equation} 
Note that $I_P(\b)^{-1}-I(\b)^{-1}$ equals the minimal increase of the variance of an estimator for $\b$ based on an unknown $F$ (semi-parametric case) compared to the situation where $F$ is known (parametric). 
In our simulation example $I_P(\b) = 26.3667$ and $I(\b) =  6.5917$.

\section{Acknowledgements}
\label{section:acknowledgements}
The authors want to thank Richard Nickl for useful comments and his reference to relevant theory in the book \cite{nickl:15} and Fadoua Balabdaoui for incisive questions and references to relevant literature. 
We also thank the associate editor and two referees for their valuable remarks.
The research of the second author was supported by the Research Foundation Flanders (FWO) [grant number 11W7315N]. 
Support from the IAP Research Network P7/06 of the Belgian State (Belgian Science Policy) is gratefully acknowledged.

\vspace{2cm}
Below we give the proofs of the results stated in  Sections \ref{section:MLE} and \ref{section:estimates} of the manuscript.
 Entropy results are used in our proofs. Before we prove the results we first give some definitions and an equicontinuity lemma needed in the proofs.
 
Consider a class of functions ${\cal F}$ on ${\cal R}$ and let $L_2(Q)$ be the $L_2-$norm defined by a probability measure $Q$ on ${\cal R}$, i.e. for $g\in {\cal F}$,
$$ \|g\|_{L_2} = \int g^2dQ. $$
For any probability measure $Q$ on ${\cal R}$ let $N_{B}(\zeta, {\cal F}, L_2(Q))$ be the minimal number $N$ for which there exists pairs of functions $\{[g_j^L,g_j^U], j=1,\ldots, N\}$ such that $\|g_j^U-g_j^L\|_{L_2}\le \zeta$ for all $j=1,\ldots,N$ and such that for each $g \in {\cal F}$ there is a $j\in \{1,\ldots,N\}$ such that $g_j^L\le g \le g_j^U$. The $\zeta-$entropy with bracketing of ${\cal F}$ (for the $L_2(Q)-$distance) is defined as $H_{B}(\zeta, {\cal F}, L_2(Q)) = \log(N_{B}(\zeta, {\cal F}, L_2(Q)))$.

 \begin{lemma}[Equicontinuity Lemma, Theorem 5.12, p.77 in \cite{geer:00}]
 	\label{lemma:equi}
 	Let ${\cal F}$ be a fixed class of functions  with envelope $F$ in $L_2(P) = \{f: \int f^2 dP < \infty\}$. Suppose that 
 	$$\int_{0}^1  H_{B}^{1/2}(u, {\cal F},L_2(P) ) \,du \leq \infty,$$
 	where $H_{B}$ is the entropy with bracketing of ${\cal F}$ for the $L_2$-norm.
 	Then, for all $\eta>0$ there exists a $\d > 0$ such that
 	\begin{align*}
 	\limsup_{n \to \infty} P\left( sup_{[\d]}|\sqrt n \int (f-g)d(\P_n-P_0)| > \eta\right) <\eta,
 	\end{align*}
 	where,
 	\begin{align*}
 	[\d] = \{ (f,g): \|f- g\|\leq \d\}.
 	\end{align*}
 \end{lemma}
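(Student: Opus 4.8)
The statement is the asymptotic equicontinuity half of Ossiander's bracketing central limit theorem, and it is precisely Theorem~5.12 in \cite{geer:00}; the plan here is to indicate the chaining argument that lies behind it. First I would reduce the claim to a maximal inequality in expectation: it suffices to produce, for every $\eta>0$, a $\delta>0$ with
\[
\limsup_{n\to\infty}\,E^*\Bigl[\sup_{[\delta]}\bigl|\sqrt n\textstyle\int (f-g)\,d(\P_n-P_0)\bigr|\Bigr]<\eta^2,
\]
since the probability bound in the statement then follows by Markov's inequality. Viewing $\{f-g:\|f-g\|\le\delta\}$ as the relevant index class, I would introduce a dyadic sequence of scales $\delta_j=2^{-j}\delta$ and, for each $j$, fix a minimal $L_2(P)$ bracketing set of cardinality $N_{B}(\delta_j,{\cal F},L_2(P))$; these are all finite because the entropy integral $\int_0^1 H_{B}^{1/2}(u,{\cal F},L_2(P))\,du$ is finite, hence each $H_{B}(\delta_j,\cdot)<\infty$.

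The core step is the chaining argument. One approximates each $f$ by a bracket endpoint at every scale $\delta_j$, writes $f$ telescopically as a coarse approximation plus a sum of successive increments, and bounds the empirical process applied to each increment by a maximal inequality over the finitely many brackets at that level. Since the brackets control the increments only in $L_2(P)$-norm, not in supremum norm, one must invoke Bernstein's inequality rather than Hoeffding's, so that the variance factor $\delta_j^2$ appears; this yields the familiar bound in terms of $\sum_j \delta_j\sqrt{H_{B}(\delta_j,{\cal F},L_2(P))}$, which is comparable to $\int_0^{\delta} H_{B}^{1/2}(u,{\cal F},L_2(P))\,du$ and therefore tends to $0$ as $\delta\downarrow 0$. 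The contribution of the coarsest approximation is negligible because it involves a fixed finite collection of functions and $\|f-g\|\le\delta$ forces those fixed quantities to be small.

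The main obstacle is the envelope condition: because $F$ is only assumed to lie in $L_2(P)$ and need not be bounded, the summands entering the chaining bound are not sub-Gaussian, so the clean form of Bernstein's inequality does not apply directly. The remedy is to truncate the increments at a level $a_n$ that grows slowly with $n$, apply Bernstein to the truncated parts, and estimate the remainder separately using $E\bigl[F^2\,1_{\{F>a_n\}}\bigr]\to 0$ together with $\|f-g\|\le\delta$. Choosing the truncation level so that the truncated chaining sum and the remainder are simultaneously $o(1)$, uniformly over the index class, is the delicate point; all of these estimates are carried out in full in Chapter~5 of \cite{geer:00}, to which we refer for the complete proof.
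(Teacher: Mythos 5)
The paper does not prove this lemma at all: it is imported verbatim as Theorem 5.12, p.~77 of \cite{geer:00}, so there is no in-paper argument to compare against. Your sketch — reduction to a maximal inequality via Markov, dyadic chaining over bracketing nets, Bernstein's inequality for the $L_2$-controlled increments, and adaptive truncation to handle the merely square-integrable envelope — is the standard and correct outline of the proof of that cited theorem, and deferring the delicate truncation estimates to Chapter~5 of \cite{geer:00} is appropriate here.
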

 
 \section{Behavior of the maximum likelihood estimator}
 In this section we prove Lemma \ref{lemma:MLE_misspecified}. We first prove in Lemma \ref{lemma:bracketing-entropy} some entropy bounds needed in the proofs.
 
 \begin{lemma}
 	\label{lemma:bracketing-entropy}
 	Let 
 	$${\cal F} =\{(t,x)\mapsto F(t-\b'x) : F\in {\cal F}_0, \b \in \Theta \}, $$
 	where ${\cal F}_0$ is the set of subdistribution functions on $[a,b]$, where $[a,b]$ contains all values $t-\b'x$ for $\b\in\Theta$, and $(t,x)$ in the compact neighborhood over which we let them vary. Then,
 	\begin{equation}
 	\label{entropy_condition2}
 	\sup_{\e>0}\e H_B\left(\e,{\cal F},L_2(P_0)\right)=O(1),
 	\end{equation}
 	Furthermore, let 
 	$${\cal G} =\{(t,x)\mapsto g(t-\b'x) : g\in {\cal G}_0, \b \in \Theta \}, $$
 	where ${\cal G}_0$ is a set functions of uniformly bounded variation, then 
 	\begin{equation}
 	\label{entropy_condition3}
 	\sup_{\e>0}\e H_B\left(\e,{\cal G},L_2(P_0)\right)=O(1).
 	\end{equation}
 \end{lemma}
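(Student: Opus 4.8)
The plan is to reduce both entropy bounds to two classical facts: (a) the class of bounded monotone functions on a compact interval has bracketing entropy of order $1/\varepsilon$ for any $L_2$ norm, and (b) composition with the finite-dimensional family of maps $(t,x)\mapsto t-\b'x$, $\b\in\Theta$, only inflates the entropy by a $\log(1/\varepsilon)$ factor coming from the covering number of the compact parameter set $\Theta$. More precisely, for the first assertion I would first recall that $H_B(\varepsilon,{\cal F}_0,L_2(\mu))=O(1/\varepsilon)$ for the class ${\cal F}_0$ of subdistribution functions on $[a,b]$ and any probability measure $\mu$; this is a standard result (e.g. the monotone-function entropy bound in \cite{geer:00}), since monotone functions bounded in $[0,1]$ form a uniformly bounded VC-type / bounded-variation class. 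Then, writing $G$ for the pushforward of $P_0$ under $(t,x,\d)\mapsto t-\b'x$ (which has a density bounded on $[a,b]$ by Assumption (A5), the density of $(T,X)$ having compact support), an $L_2(G)$-bracket $[F^L,F^U]$ for $F$ with $\|F^U-F^L\|_{L_2(G)}\le\varepsilon$ pulls back to an $L_2(P_0)$-bracket $[(t,x)\mapsto F^L(t-\b'x),(t,x)\mapsto F^U(t-\b'x)]$ of the same size, for $\b$ fixed.

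Next I would handle the dependence on $\b$. Cover $\Theta\subset\R^k$ by $N_\Theta(\eta)=O(\eta^{-k})$ balls of radius $\eta$ in the Euclidean norm; since $t-\b'x$ is Lipschitz in $\b$ uniformly over the compact support of $(t,x)$, and since the functions $F\in{\cal F}_0$ are bounded and of total variation $\le 1$, replacing $\b$ by a grid point $\b_m$ changes $F(t-\b'x)$ in $L_2(P_0)$ by at most a term controlled by the modulus of continuity of $F$. Here one must be slightly careful, because a single monotone $F$ need not be Lipschitz; the trick is that the class $\{F(\cdot-s):F\in{\cal F}_0,\ s\in[-\eta,\eta]\}$ is again contained in the monotone class on a slightly enlarged interval, so a bracket valid for the enlarged monotone class at level $\varepsilon$ simultaneously brackets all nearby shifts. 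Combining: take $\eta\asymp\varepsilon$, union the brackets over the $O(\varepsilon^{-k})$ grid points, and obtain $N_B(\varepsilon,{\cal F},L_2(P_0))\le O(\varepsilon^{-k})\cdot\exp(O(1/\varepsilon))$, hence $H_B(\varepsilon,{\cal F},L_2(P_0))=O(1/\varepsilon)+O(k\log(1/\varepsilon))=O(1/\varepsilon)$, which is (\ref{entropy_condition2}).

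For the second assertion (\ref{entropy_condition3}), the argument is identical once one replaces ``monotone/subdistribution functions'' by ``functions of uniformly bounded variation.'' Writing each $g\in{\cal G}_0$ as a difference $g=g_1-g_2$ of two nondecreasing functions with $\mathrm{Var}(g)\le M$, so that $g_1,g_2$ are bounded nondecreasing functions with ranges in an interval of length $\le M$, one gets $H_B(\varepsilon,{\cal G}_0,L_2(\mu))=O(1/\varepsilon)$ by applying the monotone-function bound to each piece and combining brackets; then the same composition-with-$(t,x)\mapsto t-\b'x$ and grid-over-$\Theta$ steps give (\ref{entropy_condition3}).

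The main obstacle I anticipate is the uniformity in $\b$: one has to be sure that discretizing $\Theta$ and then perturbing the argument of a merely-monotone (not Lipschitz) function does not destroy the $O(1/\varepsilon)$ rate. The clean way around this, as sketched above, is to never differentiate $F$ in its argument but instead absorb the shift $t-(\b-\b_m)'x$ into the monotone class on a marginally larger interval, so that a single bracketing net at level $\varepsilon$ works for a whole $\eta$-ball of $\b$'s at once; this keeps the grid cardinality polynomial and leaves the dominant $\exp(O(1/\varepsilon))$ term untouched. Everything else — the boundedness of the density of $T-\b'X$ needed to pass between $L_2(G)$ and $L_2(P_0)$ norms, and the standard monotone-entropy bound — is routine given Assumptions (A1) and (A5).
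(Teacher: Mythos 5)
Your overall strategy coincides with the paper's (bracket the one-dimensional monotone class, discretize $\Theta$, and combine), but the step where you combine the two nets contains a genuine gap. The problem is your choice $\eta\asymp\e$ for the mesh of the grid on $\Theta$, together with the claim that a single bracketing net at level $\e$ for the enlarged monotone class ``works for a whole $\eta$-ball of $\b$'s at once.'' Such a net only guarantees that each shifted function lies in \emph{some} bracket, not that all functions $(t,x)\mapsto F(t-\b'x)$ with $\b$ in a given $\eta$-ball lie in the \emph{same} bracket; indeed $F(t-\b'x)$ is not even of the form $G(t-\b_m'x)$ for any univariate $G$ when $\b\ne\b_m$, since the perturbation $(\b-\b_m)'x$ depends on $x$, so the ``absorption into the monotone class on a larger interval'' does not apply to the two-variable class you actually need to bracket. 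The only way to get one bracket per pair (grid point, monotone bracket) is the sandwich $F^L(t-\b_m'x-\eta R)\le F(t-\b'x)\le F^U(t-\b_m'x+\eta R)$, with $R$ a bound on $\|x\|$, and its width then picks up the term $\|F(\cdot+\eta R)-F(\cdot-\eta R)\|_{L_2(G)}$. For a general monotone $F$ (e.g.\ a step function) this term is of exact order $\sqrt{\eta}$, not $\eta$: one only has $\int\{F(u+\eta R)-F(u-\eta R)\}^2 f_{T-\b_m'X}(u)\,du\le M\int\{F(u+\eta R)-F(u-\eta R)\}\,du\lesssim\eta$. With $\eta\asymp\e$ your brackets therefore have $L_2(P_0)$-width of order $\sqrt{\e}$, and rewriting in terms of the true width $\delta=\sqrt{\e}$ gives only $H_B(\delta,{\cal F},L_2(P_0))=O(\delta^{-2})$, which does not prove (\ref{entropy_condition2}).

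The fix is exactly what the paper does: take the grid mesh $\eta\asymp\e^2$, so that the shift term contributes $O(\e)$ to the $L_2$ width while the grid cardinality $O(\e^{-2k})$ still only adds $O(\log(1/\e))$ to the entropy, leaving the dominant $O(1/\e)$ term from the monotone class untouched. The rest of your argument --- the $O(1/\e)$ bracketing entropy of ${\cal F}_0$ for any measure with bounded density, the identification of the $L_2(P_0)$ norm of $F(t-\b'x)$ with an $L_2$ norm in the variable $t-\b'x$, and the reduction of the bounded-variation case to two monotone pieces via the Jordan decomposition --- is correct and matches the paper.
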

 \begin{proof}
 	We only prove the result for the class $\cal F$ since the proof for the class $\cal G$ can be obtained similarly.
 	
 	Fix $\e >0$. We first note that $\Theta$ can be covered by $N$ neighborhoods with diameter at most $\e^2$ where N is of order $\e^{-2d}$. 
 	Let $\{\b_1,\ldots, \b_N\}$ denote elements of each of these neighborhoods. 
 	Consider an $\e$-bracket $[F_j^L,F_j^U], j=1,\ldots, N'$ covering the class ${\cal F}_0$ such that,
 	\begin{align*}
 	\left\{\int\{F_j^U(u)-F_j^L(u)\}^2f_{T-\b' X}(u)\,du\right\}^{1/2}<\e.
 	\end{align*}
 	for $ j=1,\ldots, N'$. The existence of such an $\e$-net is assured by the fact that $f_{T-\b'X}$ is bounded above (uniformly in $\b$). The number $N'$ is of order $\exp(C/\e)$ for some constant $C$ (See e.g. \cite{geer:00}, p.\,18). Let $\b_j$ be chosen such that $\|\b_j-\b\|<\e^2$, where $\|\cdot\|$ denotes the Euclidean norm. Then:
 	\begin{align*}
 	t-\b_j' x-\e^2 R\le t-\b' x=t-\b_j' x-(\b-\b_j)' x\le t-\b_j' x+\e^2 R,
 	\end{align*}
 	where $R$ is the maximum of the values $\|x\|$. This implies that for each $F \in {\cal F}_0$ and $\b \in \Theta,$
 	\begin{align*}
 	F_i^L(t-\b_j' x-\e^2 R)\le F(t-\b' x) \le F_i^U(t-\b_j' x+\e^2 R),
 	\end{align*}
 	for some $ i=1,\ldots, N'$ and $ j=1,\ldots, N$. The result of Lemma \ref{lemma:bracketing-entropy} follows if we can show that,
 	\begin{align}
 	\label{term:proof-bracketing}
 	\left\{\int\{F_i^U(t-\b_j' x+\e^2 R)-F_i^L(t-\b_j' x-\e^2 R)\}^2dG(t,x)\right\}^{1/2}\lesssim\e.
 	\end{align}
 	By the triangle inequality we get that the left-hand side of the above equation is bounded by:
 	\begin{align*}
 	&\left\{\int\{F(t-\b_j' x-\e^2 R)-F_i^L(t-\b_j' x-\e^2 R)\}^2\,dG(t,x)\right\}^{1/2}\\
 	&\qquad+ \left\{\int\{F_i^U(t-\b_j' x+\e^2 R)-F(t-\b_j' x+\e^2 R)\}^2\,dG(t,x)\right\}^{1/2}
 	\\
 	&\qquad+\left\{\int\{F(t-\b_j' x+\e^2 R) - F(t-\b_j' x-\e^2 R)\}^2\,dG(t,x)\right\}^{1/2}\\
 	&\lesssim \e+\left\{\int\{F(u+\e^2 R)-F(u-\e^2 R)\}^2f_{T-\b_j'x}(u)\right\}^{1/2}.
 	\end{align*}
 	Let $u_0=a-\e^2 R<u_1,\dots<u_m=b+\e^2 R$, be points such that
 	$u_k-u_{k-1}=\e^2$, $k=1,\dots,m-1$, $u_m-u_{m-1}\le\e^2$. Then:
 	\begin{align*}
 	&\int\{F(u+\e^2 R)-F(u-\e^2 R)\}^2f_{T-\b_jX}(u)\,du\\
 	&\le\int\{F(u+\e^2 R)-F(u-\e^2 R)\}f_{T-\b_jX}(u)\,du\le M\int\{F(u+\e^2 R)-F(u-\e^2 R)\}\,du\\
 	&=M\int_{a+\e^2R}^{b+\e^2R}F(u)\,du-M\int_{a-\e^2R}^{b-\e^2R}F(u)\,du\\
 	&\le M\int_{a-\e^2R}^{a+\e^2R}F(u)\,du+M\int_{b-\e^2R}^{b+\e^2R}F(u)\,du\lesssim \e^2,
 	\end{align*}
 	where $M$ is an upper bound for $f_{T-\b_j'X}$, and where we extend the function $F$ by a constant value outside $[a,b]$. This completes the proof of (\ref{term:proof-bracketing}) since we have shown that there exist positive constants $A_1$, $A_2,A_3$ and $C$ such that
 	\begin{align*}
 	H_B\left(\e,{\cal F},L_2(P_0)\right) &\le \log N + \log N' \le d\log(A_1/\e^2) + A_2\log(\exp(C/\e))\le A_3/\e\\
	&=O(\log(1/\e))+O(1/\e)=O(1/\e),\qquad\e\downarrow0.
 	\end{align*}
 \end{proof}

 \begin{proof}[Proof of Lemma \ref{lemma:MLE_misspecified}]
   Let $h$ denote the Hellinger distance on the class of densities ${\cal P}$ defined by
 \begin{align*}
 {\cal P} =\left\{p_{\b,F}(t,x,\d) = \d F(t-\b'x)+(1-\d)\{1-F(t-\b'x)\}: F \in {\cal F}_0, \b \in \Theta\right\},
 \end{align*}
 w.r.t.\ the product of counting measure on $\{0,1\}$ and the measure $dG$ of $(T,X)$, where ${\cal F}_0$ is the class of right-continuous subdistribution functions.
 
We have (see, e.g., the ``basic inequality'' Lemma 4.5, p.\ 51 of \cite{geer:00}):
 \begin{align*}
 h^2\left(p_{\b,\hat F_{n,\b}},p_{_{\b,F_{\b}}}\right)\le \int \frac{2p_{\b,\hat F_{n,\b}}}{p_{\b,\hat F_{n,\b}}+p_{_{\b,F_{\b}}}}\,d\left(\P_n-P_0\right),
 \end{align*}
where we use the convexity of the set of densities of this type for (temporarily) fixed $\b$.
Hence we get, for $\ee\in(0,1]$:
\begin{align*}
&\P\left\{ \sup_{\b\in\Theta}h\left(p_{\b,\hat F_{n,\b}},p_{_{\b,F_{\b}}}\right)\ge\ee\right\}\\
&=\P\left\{\sup_{\b\in\Theta,\,h\bigl(p_{\b,\hat F_{n,\b}},\,p_{_{\b,F_{\b}}}\bigr)\ge\ee}\left\{
\int \left\{\frac{2p_{\b,\hat F_{n,\b}}}{p_{\b,\hat F_{n,\b}}+p_{_{\b,F_{\b}}}}-1\right\}\,d\left(\P_n-P_0\right)-h^2\left(p_{\b,\hat F_{n,\b}},p_{_{\b,F_{\b}}}\right)\right\}\ge0\right.\\
&\left.\phantom{\sup_{\b\in\Theta,\,h\bigl(p_{\b,\hat F_{n,\b}},\,p_{_{\b,F_{\b}}}\bigr)\ge\ee}\left\{
\int \left\{\frac{2p_{\b,\hat F_{n,\b}}}{p_{\b,\hat F_{n,\b}}+p_{_{\b,F_{\b}}}}-1\right\}\,d\left(\P_n-P_0\right)\right.}\qquad\qquad
,\,\sup_{\b\in\Theta}h\left(p_{\b,\hat F_{n,\b}},p_{_{\b,F_{\b}}}\right)\ge\ee\right\}\\
&\le\P\left\{\sup_{\b\in\Theta,\,F\in{\cal F}_0,\,h\bigl(p_{_{\b,F}},\,p_{_{\b,F_{\b}}}\bigr)\ge\ee}
\left\{\int \left\{\frac{2p_{_{\b, F}}}{p_{_{\b, F}}+p_{_{\b,F_{\b}}}}-1\right\}\,d\left(\P_n-P_0\right)-h^2\bigl(p_{_{\b,F}},\,p_{_{\b,F_{\b}}}\bigr)\right\}\ge0\right\}\\
\end{align*}
\begin{align*}
&\le\sum_{s=0}^\infty\P\left\{\sup_{\substack{\b\in\Theta,\,F\in{\cal F}_0, \\ 2^{2s}\ee \le h\bigl(p_{_{\b, F}},\,p_{_{\b,F_{\b}}}\bigr)\le2^{s+1}\ee}}
\sqrt{n}\int \left\{\frac{2p_{_{\b, F}}}{p_{_{\b, F}}+p_{_{\b,F_{\b}}}}-1\right\}\,d\left(\P_n-P_0\right)\ge\sqrt{n}\,2^{2s}\ee^2\right\},
\end{align*}
We can now use Theorem 5.13 in \cite{geer:00}, taking $\ee=Mn^{-1/3}, \a = 1,\b = 0$ and $T =\sqrt{n}\,2^{2s}\ee^2 = M2^{2s}n^{-1/6}$, together with Lemma \ref{lemma:bracketing-entropy} for the entropy of the set of densities to conclude:
\begin{align*}
&\sum_{s=0}^\infty\P\left\{\sup_{\substack{\b\in\Theta,\,F\in{\cal F}_0, \\ 2^{2s}\ee \le h\bigl(p_{_{\b, F}},\,p_{_{\b,F_{\b}}}\bigr)\le2^{s+1}\ee}}
\sqrt{n}\int \left\{\frac{2p_{_{\b, F}}}{p_{_{\b, F}}+p_{_{\b,F_{\b}}}}-1\right\}\,d\left(\P_n-P_0\right)\ge\sqrt{n}\,2^{2s}\ee^2\right\}\\ &\quad \le\sum_{s=0}^\infty c_1 \exp(-c_2 M2^{2s})
\end{align*}
for constants $c_1,c_2>0$. Since the sum can be made arbitrarily small for $M$ sufficiently large, we find:
\begin{align*}
\sup_{\b\in\Theta}h\left(p_{\b,\hat F_{n,\b}},p_{_{\b,F_{\b}}}\right)=O_p\left(n^{-1/3}\right).
\end{align*}
We have:
\begin{align*}
&h\left(p_{\b,\hat F_{n,\b}},p_{_{\b,F_{\b}}}\right)^2\\
&=\tfrac12\int\left\{p^{1/2}_{\b,\hat F_{n,\b}}(t,x,1)-p^{1/2}_{_{\b,F_{\b}}}(t,x,1)\right\}^2\,dG(t,x)
+\tfrac12\int\left\{p^{1/2}_{\b,\hat F_{n,\b}}(t,x,0)-p^{1/2}_{_{\b,F_{\b}}}(t,x,0)\right\}^2\,dG(t,x)\\
&=\tfrac12\int\left\{\hat F_{n,\b}(t-\b'x)^{1/2}-F_{\b}(t-\b'x)^{1/2}\right\}^2\,dG(t,x)\\
&\qquad\qquad\qquad\qquad+\tfrac12\int\left\{\left(1-\hat F_{n,\b}(t-\b'x)\right)^{1/2}-\left(1-F_{\b}(t-\b'x)\right)^{1/2}\right\}^2\,dG(t,x),
\end{align*}
and
\begin{align*}
&\int\left\{\hat F_{n,\b}(t-\b'x)-F_{\b}(t-\b'x)\right\}^2\,dG(t,x)\\
&=\int\left\{\hat F_{n,\b}(t-\b'x)^{1/2}-F_{\b}(t-\b'x)^{1/2}\right\}^2\left\{\hat F_{n,\b}(t-\b'x)^{1/2}+F_{\b}(t-\b'x)^{1/2}\right\}^2\,dG(t,x)\\
&\le4\int\left\{\hat F_{n,\b}(t-\b'x)^{1/2}-F_{\b}(t-\b'x)^{1/2}\right\}^2\,dG(t,x)\le 8h\left(p_{\hat F_{n,\b}},p_{_{F_{\b}}}\right)^2
\end{align*}
So we find
\begin{align*}
\sup_{\b\in\Theta}\int\left\{\hat F_{n,\b}(t-\b'x)-F_{\b}(t-\b'x)\right\}^2\,dG(t,x)=O_p\left(n^{-2/3}\right).
\end{align*}
\end{proof}

\section{Asymptotic behavior of the simple estimate based on the MLE $\hat F_{n,\b}$, avoiding any smoothing}
This section contains the proof of Theorem \ref{theorem:method1} stated in Section \ref{subsection:MLE} of the manuscript. The proof is decomposed into three parts: (a) proof of existence of a root of $\psi_{1,n}$, (b) proof of consistency of $\hat \b_n$ and (c) proof of asymptotic normality of $\sqrt n (\hat \b_n- \b_0)$. We first prove the properties given in Lemma \ref{lemma:population_score} of the population version of the statistic $\psi_{1,n}^{(\ee)}$ defined by,
\begin{align}
\label{score_population}
\psi_{1,\ee}(\b) &= \int_{F_\b(t-\b'x) \in [\ee,1-\ee]} x \{\d -F_\b(t-\b'x)\}\,dP_0(t,x,\d)\nonumber\\
&=\int_{F_\b(t-\b'x) \in [\ee,1-\ee]} x \{F_0(t-\b_0x) - F_\b(t-\b'x)\}\,dG(t,x).
\end{align}

\begin{proof}[Proof of Lemma \ref{lemma:population_score} ]
We first note that,
\begin{align*}
\psi_{1,\ee}(\b_0) =\int_{F_{0}(t-\b_0'x)\in[\ee,1-\ee]} x\Bigl\{\E\{\dd |(T,X) = (t,x)\} - F_0(t-\b_0' x) \Bigr\}\,dG(t,x) = 0,
\end{align*} 
since $\E\{\dd |(T,X) = (t,x)\} =  F_{0}(t-\b_0' x)$. We next continue by showing result (i). Since
$$\E\left( \dd| T-\b'X=t-\b'x \right) = F_\b(t-\b'x), $$
we get,
\begin{align*}
&\E_{\ee,\b}\left[\text{Cov}\left( \dd, X | T-\b'X \right)\right]\\
&\stackrel{\mbox{\small def}}=\int_{F_\b(t-\b'x) \in [\ee,1-\ee]}\text{Cov}\left( \dd, X | T-\b'X =u \right)f_{T-\b'X}(u)\,du \\
&=\int_{F_{\b}(u)\in[\ee,1-\ee]}\hspace{-0.3cm}\text{Cov}\left\{X,F_0(u+(\b-\b_0)'X)\Bigm|T-\b'X=u\right\}f_{T-\b'X}(u)\,du\\
&=\int_{F_{\b}(t-\b'x)\in[\ee,1-\ee]} x\Bigl\{F_0(t-\b'x+(\b-\b_0)'x)-F_{\b}(t-\b' x)\Bigr\}\,dG(t,x)\\
&=\int_{F_{\b}(t-\b'x)\in[\ee,1-\ee]} x\left\{F_0(t-\b_0' x) - F_{\b}(t-\b' x)\right\}\,dG(t,x) =\psi_{1,\ee}(\b).
\end{align*}

For the second result (ii), we write
\begin{align*}
&(\b -\b_0)'\text{Cov}\left( \dd, X | T-\b'X = u \right) \\
&\quad  =  \text{Cov}\left( F_0( T-\b'X +(\b -\b_0)'X), (\b -\b_0)'X | T-\b'X = u \right)
\end{align*} 
which is positive for all $\b$, following from the fact that $F_0$ is an increasing function. Indeed, using Fubini's theorem, one can prove that for any random variables $X$ and $Y$ such that $XY,X$ and $Y$ are integrable, we have
\begin{align*}
\text{\rm Cov}\left\{X,Y \right\} = EXY-EXEY=\int\{\P(X\ge s,Y\ge t)-\P(X\ge s)\P(Y\ge t)\}\,ds\,dt.
\end{align*}
Denote $Z_1 =(\b-\b_0)' X$ and $Z_2 =F_0( u + (\b -\b_0)'X) = F_0(u+Z_1)$. For simplicity of notation we no longer write the condition $T-\b'X = u$ but note that the results below hold conditional on $T-\b'X = u $. Using the monotonicity of the function $F_0$, we have
\begin{align*}
\P(Z_1\ge z_1,Z_2\ge z_2) &= \P(Z_1 \ge \max\{z_1, \tilde z_2 \}) \ge \P(Z_1 \ge \max\{z_1, \tilde z_2 \})\P(Z_1 \ge \min\{z_1, \tilde z_2 \})\\
& = \P(Z_1 \ge z_1)\P(Z_2 \ge z_2),
\end{align*}
where 
\begin{align*}
\tilde z_2 = F_0^{-1}(z_2) - u.
\end{align*}
We conclude that,
\begin{align*}
&\text{Cov}\left( F_0( T-\b'X +(\b -\b_0)'X), (\b -\b_0)'X | T-\b'X = u \right)\\
&\qquad = \int\{\P(Z_1\ge z_1,Z_2\ge z_2)-\P(Z_1\ge z_1)\P(Z_2\ge z_2)\}\,dz_1\,dz_2 \ge 0,
\end{align*}
and hence (ii) follows from the assumption that the covariance $\text{\rm Cov}(X,F_0(u+(\b-\b_0)'X)|T-\b'X=u)$ is not identically zero for $u$ in the region $A_{\ee,\b}$, for each $\b\in \Theta$, implying:
\begin{align*}
&\E_{\ee,\b}\left[\text{Cov}\left( \dd, X | T-\b'X \right)\right]\\
&=\int_{F_\b(u) \in [\ee,1-\ee]}\text{Cov}\left( F_0( T-\b'X +(\b -\b_0)'X), (\b -\b_0)'X | T-\b'X = u \right)f_{T-\b'X }(u)du\ge 0.
\end{align*}
\noindent
[Uniqueness of $\b_0$:]
	
We next show that $\b_0$ is the only value $\b_* \in \Theta$ such that $ \E_{\ee,\b}\left[(\b -\b_*)'\text{Cov}\left( \dd, X | T-\b'X \right)\right] \ge 0 \text{ for all } \b \in \Theta$.
We start by assuming that, on the contrary, there exists $\b_1 \ne \b_0 $ in $\Theta$ such that
$$(\b -\b_0 )'\psi_{1,\ee}(\b)\ge 0 \qquad \text{ and } (\b - \b_1 )'\psi_{1,\ee}(\b)\ge 0 \qquad \text{for all } \b \in \Theta,$$
and we consider the point $\tilde \b \in \Theta$ given by 
$$
\tilde \b = \tfrac12\{\b_0 + \b_1\}.
$$
The existence of the point $\tilde \b$ is ensured by the convexity of the set $\Theta$.
For this point, we have,
$$(\tilde \b -\b_0)'\psi_{1,\ee}(\tilde\b)=-(\tilde \b -\b_1 )'\psi_{1,\ee}(\tilde\b), $$
which is not possible since both terms should be positive and $\psi_{1,\ee}(\tilde\b)$ is not equal to zero (since, by the assumption that the covariance $\text{\rm Cov}(X,F_0(u+(\b-\b_0)'X)|T-\b'X=u)$ is not identically zero for $u$ in the region $A_{\ee,\b}$,  $\psi_{1,\ee}(\b)$ is only zero at $\b =\b_0$.)

We now calculate the derivative of $\psi_{1,\ee}$ at $\b = \b_0$. We have,
\begin{align*}
&\psi_{1,\ee}'(\b)=  \frac{\partial}{\partial \b}\int_{ F_\b^{-1}(\ee)\leq t-\b'x \leq F_\b^{-1}(1-\ee) } x \{\d -F_\b(t-\b'x)\}dP_0(t,x,\d) \\
&=  \frac{\partial}{\partial \b}\int_{ F_\b^{-1}(\ee)\leq t-\b'x \leq F_\b^{-1}(1-\ee) } x \{F_0(t-\b_0'x) - F_\b(t-\b'x) \}\,dG(t,x)\\
&=\frac{\partial}{\partial \b}\int_{u=F_\b^{-1}(\ee)}^{F_\b^{-1}(1-\ee)}\int x\left\{F_0(u+(\b-\b_0)'x)- F_\b(u)\right\} f_{X|T-\b'X}(x|u)f_{T-\b'X}(u)\,dx\,du\\
\end{align*}
\begin{align*}
&=\int_{u=F_\b^{-1}(\ee)}^{F_\b^{-1}(1-\ee)} \int\frac{\partial}{\partial \b}\left\{x\left\{F_0(u+(\b-\b_0)'x)-F_\b(u)\right\} f_{X|T-\b'X}(x|u)f_{T-\b'X}(u)\right\}\,dx\,du\\
&\qquad+\left\{\frac{\partial}{\partial \b}F_\b^{-1}(1-\ee)\right\}\int x\left\{F_0(F_\b^{-1}(1-\ee)+(\b-\b_0)'x)- (1-\ee)\right\}\\
&\qquad\qquad\qquad\qquad\qquad\qquad\qquad\cdot f_{X|T-\b' X}(x|F_\b^{-1}(1-\ee))f_{T-\b'X}(F_\b^{-1}(1-\ee))\,dx\\
&\qquad-\left\{\frac{\partial}{\partial \b}F_\b^{-1}(\ee)\right\}\int x\left\{F_0(F_\b^{-1}(\ee)+(\b-\b_0)x) - \ee\right\}\\
&\qquad\qquad\qquad\qquad\qquad\qquad\qquad \cdot f_{X|T-\b' X}(x|F_\b^{-1}(\ee))f_{T-\b'X}(F_\b^{-1}(\ee))\,dx.
\end{align*}
Note that if $\b=\b_0$, we get:
\begin{align*}
&\psi_{1,\ee}'(\b_0)=\\
&\int_{F_0^{-1}(\ee)}^{F_0^{-1}(1-\ee)}\int\left.\frac{\partial}{\partial \b}\left\{x\left\{ F_0(u+(\b-\b_0)'x)- F_\b(u)\right\} f_{X|T-\b'X}(x|u)f_{T-\b'X}(u)\right\}\right|_{\b=\b_0}\hspace{-0.4cm}\,du\,dx.
\end{align*}
since the last two terms vanish because the integrands become zero in that case. Note that,
\begin{align*}
\frac{\partial}{\partial \b}F_\b(u) &= \int y f_0(u+(\b-\b_0)'y)f_{X|T-\b' X}(y|u)\,dy  \\
&\qquad+ \int F_0(u+(\b-\b_0)'y) \frac{\partial}{\partial \b}f_{X|T-\b' X}(y|u)\,dy,
\end{align*}
implying that, at $\b = \b_0$,
\begin{align*}
\frac{\partial}{\partial \b}F_\b(u)\Bigm|_{\b = \b_0} &=f_0(u) \E(X |T-\b_0' X = u).
\end{align*}
Since
\begin{align*}
&\int_{u=F_0^{-1}(\ee)}^{F_0^{-1}(1-\ee)}\int x \left\{ x - \E(X |T-\b_0' X = u) \right\}'f_{X|T-\b_0'X}(x|u)\,f_0(u)f_{T-\b_0'X}(u)\,dx\,du\\
&= \E_{\ee}\left[ X\{X-\E(X| T-\b_0'X)\}'f_0(T-\b_0'X)\right],
\end{align*}
(\ref{derivative_score}) now follows.
\end{proof}

\begin{proof}[Proof of Theorem \ref{theorem:method1}, Part 1 (Existence of a root)]
Consider the score function
\begin{align*}
\psi_{1,n}^{(\ee)}(\b)&=\int_{\hat F_{n,\b}(t-\b'x) \in [\ee,1-\ee]} x\bigl\{\d - \hat F_{n,\b}(t-\b'x)\bigr\}\,d\P_n(t,x,\d),
\end{align*}
where $\hat F_{n,\b}$ is the nonparametric maximum likelihood estimator (MLE) of the error distribution.
According to the discussion in Section \ref{subsection:MLE} we have to show that there exists a point $\hat\beta_n$ such that 
\begin{align*}
\psi_{1,n}^{(\ee)}(\b) =&\int_{\hat F_{n,\b}(t-\b'x)\in[\ee,1-\ee]} x\bigl\{\d-\hat F_{n,\b}(t-\b'x)\bigr\}\,d\P_n(t,x,\d)
\end{align*}
has a zero-crossing at $\b =\hat\b_n$. We have:
\begin{align}
	\label{zeroing}
	\\
	\psi_{1,n}^{(\ee)}(\b) &=  \int_{\hat F_{n,\b}(t-\b'x) \in [\ee,1-\ee]} x\bigl\{ \d - F_\b(t-\b'x)\bigr\}\,d\P_n(t,x,\d)\nonumber\\
	&\qquad+ \int_{\hat F_{n,\b}(t-\b'x) \in [\ee,1-\ee]} x\bigl\{F_\b(t-\b'x) - \hat F_{n,\b}(t-\b'x)\bigr\}\,d\P_n(t,x,\d)\nonumber\\
	&=  \int_{\hat F_{n,\b}(t-\b'x) \in [\ee,1-\ee]} x\bigl\{\d -  F_\b(t-\b'x)\bigr\}\,d\P_n(t,x,\d)\nonumber\\
	&\qquad+ \int_{\hat F_{n,\b}(t-\b'x) \in [\ee,1-\ee]} x\bigl\{F_\b(t-\b'x) - \hat F_{n,\b}(t-\b'x)\bigr\}\,d(\P_n-P_0)(t,x,\d)\nonumber\\
	&\qquad+ \int_{\hat F_{n,\b}(t-\b'x) \in [\ee,1-\ee]} x\bigl\{F_\b(t-\b'x) -\hat F_{n,\b}(t-\b'x)\bigr\}dP_0(t,x,\d)\nonumber.
\end{align}
	Let ${\cal F}$ be the set of piecewise constant distribution functions with finitely many jumps (like the MLE $\hat F_{n,\hat\b_n}$), and let, for $\b\in\Theta$,  ${\cal K}$ be the set of functions
\begin{align}
	\label{cal_K1}
	\K&=\left\{(t,x,\d)\mapsto x\bigl\{\d - F_{\b}(t-\b'x)\bigr\}1_{[\ee,1-\ee]}\left(F(t-\b'x)\right): F\in{\cal F},\b\in\Theta\right\}.
\end{align}
We add the function
\begin{align*}
(t,x,\d)\mapsto x\bigl\{\d - F_{\b}(t-\b'x)\bigr\}1_{[\ee,1-\ee]}\left(F_{\b}(t-\b'x)\right)
\end{align*}
to $\cal K$. 
We denote by $H_B(\zeta,\K,L_2(P_0))$ the bracketing $\zeta$-entropy w.r.t.\ the $L_2$-distance $d$, defined by
\begin{align}
\label{distance_cal_K}
d(k_1,k_2)^2=\int \left\|k_1-k_2\right\|^2\,dP_0,\qquad k_1,k_2\in\cal K.
\end{align}	
Note that
\begin{align*}
x\bigl\{\d -F_{\b}(t-\b'x)\bigr\}1_{[\ee,1-\ee]}\left(F(t-\b'x)\right)=f_{1,\b}(t,x,\d)f_{2,\b}(t,x,\d),
\end{align*}
where
\begin{align*}
	f_{1,\b}(t,x,\d)=x\bigl\{\d -F_{\b}(t-\b'x)\bigr\},
\end{align*}
and
\begin{align*}
f_{2,\b}(t,x,\d)=1_{[\ee,1-\ee]}\left(F(t-\b'x)\right).
\end{align*}
Since $t$ and $x$ vary over a bounded region and, by (A4), $F_{\b}$ is of bounded variation, $f_{1,\b}$ is of bounded variation.
Moreover,
\begin{align*}
	f_{2,\b}(t,x,\d)=1_{[\ee,1-\ee]}\left(F(t-\b'x)\right)=1_{[\ee,1]}\left(F(t-\b'x)\right)-1_{(1-\ee,1]}\left(F(t-\b'x)\right).
\end{align*}
Since $F$ is monotone, we have:
\begin{align}
	\label{BV_indicator_function}
	1_{[\ee,1]}\left(F(t-\b'x)\right)-1_{(1-\ee,1]}\left(F(t-\b'x)\right)=1_{[a_{\ee,F},M]}(t-\b'x)-1_{(b_{\ee,F},M]}(t-\b'x)
\end{align}
for points $a_{\ee,F}\le b_{\ee,F}$, where $M$ is an upper bound for the values of $t-\b'x$. Hence $f_{2,\b}$ is also a function of uniformly bounded variation.
	
We therefore get, using Lemma \ref{lemma:bracketing-entropy}
\begin{align*}
	\sup_{\zeta>0} \zeta H_B\left(\zeta,{\cal K},L_2(P_0)\right)=O(1),
\end{align*}
which implies:
\begin{align*}
\int_0^{\zeta} H_B\left(u,{\cal K},L_2(P_0)\right)^{1/2}\,du=O\left(\zeta^{1/2}\right),\qquad\zeta>0.
\end{align*}
This implies
\begin{align*}
&\int_{\hat F_{n,\b}(t-\b'x) \in [\ee,1-\ee]} x\bigl\{\d- F_\b(t-\b'x)\bigr\}\,d\P_n(t,x,\d)\\
	&=\int_{\hat F_{n,\b}(t-\b'x) \in [\ee,1-\ee]} x\bigl\{ \d-F_\b(t-\b'x)\bigr\}\,dP_0(t,x,\d)\\
	&\qquad +\int_{\hat F_{n,\b}(t-\b'x) \in [\ee,1-\ee]} x\bigl\{\d- F_\b(t-\b'x)\bigr\}\,d\bigl(\P_n-P_0\bigr)(t,x,\d)\\
	&=\int_{F_{\b}(t-\b'x) \in [\ee,1-\ee]} x\bigl\{\d- F_\b(t-\b'x)\bigr\}\,dP_0(t,x,\d)\\
	&\qquad +\int_{F_{\b}(t-\b'x) \in [\ee,1-\ee]} x\bigl\{\d- F_\b(t-\b'x)\bigr\}\,d\bigl(\P_n-P_0\bigr)(t,x,\d) +o_p(1)\\
	&=\psi_{1,\ee}(\b)+o_p(1),
\end{align*}
uniformly in $\b$ in $\Theta$, by the convergence in probability (and almost surely) of $\hat F_{n,\b}$ to $F_{\b}$, where we use Lemma \ref{lemma:equi}
for the second term on the right-hand side of the first equality to make the transition of the integration region $\hat F_{n,\b}(t-\b'x) \in [\ee,1-\ee]$ to
$F_{\b}(t-\b'x) \in [\ee,1-\ee]$.
	
For the second term of (\ref{zeroing}) we argue similarly, this time using the function class
\begin{align}
	\label{cal_K1_{beta}'}
	\K'&=\left\{(t,x,\d)\mapsto x\bigl\{F_{\b}(t-\b'x) - F(t-\b'x)\bigr\}1_{[\ee,1-\ee]}\left(F(t-\b'x)\right): F\in{\cal F},\,\b\in\Theta\right\}.
\end{align}
to which we add the function that is identically zero. This implies that these terms are $o_p(1)$.
For the third term of (\ref{zeroing}) we get by an application of the Cauchy-Schwarz inequality that, uniformly in $\b$,
\begin{align*}
&\int_{\hat F_{n,\b}(t-\b'x) \in [\ee,1-\ee]} x\bigl\{F_{\b}(t-\b'x) -\hat F_{n,\b}(t-\b'x)\bigr\}dP_0(t,x,\d)\\
&\le \left( \int_{\hat F_{n,\b}(t-\b'x) \in [\ee,1-\ee]} x^2 dP_0(t,x,\d) \int_{\hat F_{n,\b}(t-\b'x) \in [\ee,1-\ee]} \bigl\{F_{\b}(t-\b'x) - \hat F_{n,\b}(t-\b'x)\bigr\}^2dP_0(t,x,\d)\right)^{1/2}\\
&=O_p(n^{-1/3}).
\end{align*}
The conclusion is that,
\begin{align}
\label{relation_Score}
\psi_{1,n}^{(\ee)}(\b) = \psi_{1,\ee}(\b)+o_p(1),
\end{align}
uniformly in $\b \in \Theta$.

[Existence of $\hat\b_n$:]
	Let $\psi_{1,\ee}$ be the population version of the statistic $\psi_{1,n}^{(\ee)}$ defined by,
	\begin{align}
	\label{score_population}
	\psi_{1,\ee}(\b) = \int_{F_\b(t-\b'x) \in [\ee,1-\ee]} x \{\d-F_\b(t-\b'x)\}\,dP_0(t,x,\d).
	\end{align}
	We have
	\begin{align*}
	\psi_{1,\ee}(\b_0) = 0.
	\end{align*}
Furthermore, 
	\begin{align}
\label{expansion_psi_n}
	& \psi_{1,n}^{(\ee)}(\b) = \psi_{1,\ee}'(\b_0)(\b-\b_0) +R_n(\b),
	\end{align}
	where $R_n(\b) = o_p(1)+o(\b-\b_0)$, and where the $o_p(1)$ term is uniform in $\b\in\Theta$. Note that $\psi_{1,\ee}'(\b_0)$  is by definition non-singular.

We now define, for $h>0$, the functions
\begin{align*}
\tilde R_{n,h}(\b)=h^{-d}\int K_h(u_1-\b_1)\dots K_h(u_d-\b_d)\,R_n(u_1,\dots,u_d)\,du_1\dots\,du_d,
\end{align*}	
where $d$ is the dimension of $\Theta$ and
\begin{align*}
K_h(x)=h^{-1}K(x/h),\qquad x\in\R,
\end{align*}
letting $K$ be one of the usual smooth kernels with support $[-1,1]$, like the Triweight kernel that we used in the simulations.

Furthermore, we define:
\begin{align*}
\tilde \psi_{1,n,h}^{(\ee)}(\b)= \psi_{1,\ee}'(\b_0)(\b-\b_0) +\tilde R_{nh}(\b).
\end{align*}
Clearly:
\begin{align*}
\lim_{h\downarrow0}\tilde \psi_{1,n,h}^{(\ee)}(\b))=\psi_{1,n}^{(\ee)}(\b)\qquad\text{and}\qquad \lim_{h\downarrow0}\tilde R_{nh}(\b)=R_n(\b)
\end{align*}
for each continuity point $\b$ of $\psi_{1,n}^{(\ee)}$.

We now reparametrize, defining
\begin{align*}
\g=\psi_{1,\ee}'(\b_0)\b,\qquad \g_0=\psi_{1,\ee}'(\b_0)\b_0.
\end{align*}
This gives:
\begin{align*}
\psi_{1,\ee}'(\b_0)(\b-\b_0) +\tilde R_{nh}(\b)=\g-\g_0+\tilde R_{nh}\left(\psi_{1,\ee}'(\b_0)^{-1}\g\right).
\end{align*}
By (\ref{expansion_psi_n}), the mapping
\begin{align*}
\g\mapsto \g_0-R_n\left(\psi_{1,\ee}'(\b_0)^{-1}\g\right),
\end{align*}
maps, for each $\eta>0$, the ball $B_{\eta}(\g_0)=\{\g:\|\g-\g_0\}\le\eta\}$ into $B_{\eta/2}(\g_0)=\{\g:\|\g-\g_0\}\le\eta/2\}$ for all large $n$, with probability tending to one, where $\|\cdot\|$ denotes the Euclidean norm, implying that the {\it continuous} map
\begin{align*}
\g\mapsto \g_0-\tilde R_{nh}\left(\psi_{1,\ee}'(\b_0)^{-1}\g\right),
\end{align*}
maps $B_{\eta}(\g_0)=\{\g:\|\g-\g_0\}\le\eta\}$ into itself for all large $n$ and small $h$. So for large $n$ and small $h$
there is, by Brouwer's fixed point theorem a point $\g_{nh}$ such that
\begin{align*}
\g_{nh}=\g_0-\tilde R_{nh}\left(\psi_{1,\ee}'(\b_0)^{-1}\g_{nh}\right).
\end{align*}
Defining $\b_{nh}=\psi_{1,\ee}'(\b_0)^{-1}\g_{nh}$, we get:
\begin{align}
\label{Brouwer_relation}
\tilde \psi_{1,n,h}^{(\ee)}(\b_{nh})= \psi_{1,\ee}'(\b_0)(\b_{nh}-\b_0) +\tilde R_{nh}(\b_{nh})=0.
\end{align}
By compactness, $(\b_{n,1/k})_{k=1}^{\infty}$ must have a subsequence $(\b_{n,1/k_i})$ with a limit $\tilde\b_n$, as $i\to\infty$.
We show that each component of $\psi_{1,n}^{(\ee)}$ has a crossing of zero at $\tilde\b_n$.

Suppose that the $j$th component $\psi_{1,n,j}^{(\ee)}$ of $\psi_{1,n}^{(\ee)}$ does not have a crossing of zero at $\tilde\b_n$.
Then there must be an open ball $B_{\d}(\tilde\b_n)=\{\b:\|\b-\tilde\b_n\|<\d\}$ of $\tilde\b_n$ such that $\psi_{1,n,j}^{(\ee)}$ has a constant sign in $B_{\d}(\tilde\b_n)$,
say $\psi_{1,n,j}^{(\ee)}(\b)>0$ for $\b\in B_{\d}(\tilde\b_n)$. Since $\psi_{1,n,j}^{(\ee)}$ only has finitely many values, this means that
\begin{align*}
\psi_{1,n,j}^{(\ee)}(\b)\ge c>0,\qquad\text{for all }\b\in B_{\d}(\tilde\b_n),
\end{align*}
for some $c>0$. This means that the $j$th component $\tilde \psi_{1,n,h,j}^{(\ee)}$ of $\tilde \psi_{1,n,h}^{(\ee)}$ satisfies
\begin{align*}
&\tilde \psi_{1,n,h,j}^{(\ee)}(\b)= \psi_{1,\ee}'(\b_0)(\b-\b_0) +\tilde R_{nh}(\b)\\
&=h^{-d}\int \left\{\left[\psi_{1,\ee}'(\b_0)(\b-\b_0)\right]_j+R_{nj}(u_1,\dots,u_d)\right\}K_h(u_1-\b_1)\dots K_h(u_d-\b_d)\,du_1\dots\,du_d\\
&\ge
h^{-d}\int \left\{\left[\psi_{1,\ee}'(\b_0)(u-\b_0)\right]_j+R_{nj}(u_1,\dots,u_d)\right\}K_h(u_1-\b_1)\dots K_h(u_d-\b_d)\,du_1\dots\,du_d
-c/2\\
&\ge c\,h^{-d}\int K_h(u_1-\b_1)\dots K_h(u_d-\b_d)\,du_1\dots\,du_d-c/2\\
&=c/2,
\end{align*}
for $\b\in B_{\d/2}(\tilde\b_n)$ and sufficiently small $h$, contradicting (\ref{Brouwer_relation}), since $\b_{nh}$, for $h=1/k_i$, belongs to
$B_{\d/2}(\tilde\b_n)$ for large $k_i$.

\end{proof}
\begin{proof}[Proof of Theorem \ref{theorem:method1}, Part 2 (Consistency)]
	We assume that $\hat\b_n$ is contained in the compact set $\Theta$, and hence the sequence $(\hat\b_n)$ has a subsequence $(\hat\b_{n_k}=\hat\b_{n_k}(\omega))$, converging to an element $\b_*$.
	If $\hat\b_{n_k}=\hat\b_{n_k}(\omega)\longrightarrow \b_*$, we get by Lemma \ref{lemma:MLE_misspecified},
	\begin{align*}
	\hat F_{n_k,\hat\b_{n_k}}(t-\hat\b_{n_k}'x)\longrightarrow  F_{\b_*}(t-\b_*'x),
	\end{align*}
	where $F_\b$ is defined in (\ref{def_F_beta}).
	In the limit  we get therefore the relation
	\begin{align}
	&\lim_{k\to\infty}\int_{\hat F_{n_k,\hat\b_{n_k}}(t-\hat\b_{n_k}'x)\in[\ee,1-\ee]} x\bigl\{\d -F_{n_k,\hat\b_{n_k}}(t-\hat\b_{n_k}'x)\bigr\}d\P_{n_k}(t,x,\d)
	\nonumber\\
	&=\int_{F_{\b_*}(t-\b_*'x)\in[\ee,1-\ee]}x\bigl\{F_0(t-\b_0'x)-F_{\b_*}(t-\b_*'x)\bigr\}\,dG(t,x) = 0,\nonumber
	\end{align}
using that, in the limit, the crossing of zero becomes a root of the continuous limiting function.
	Consider
	\begin{align*}
	&\int_{F_{\b_*}(t-\b_*'x)\in[\ee,1-\ee]} x\bigl\{F_0(t-\b_0'x)-F_{\b_*}(t-\b_*' x)\bigr\}\,dG(t,x)\\
	&=\int_{F_{\b_*}(t-\b_*'x)\in[\ee,1-\ee]} x\bigl\{F_0(t-\b_*'x+(\b_*-\b_0)'x)-F_{\b_*}(t-\b_*' x)\bigr\}\,dG(t,x).
	\end{align*}
	Since
	\begin{align*}
	F_{\b_*}(t-\b_*'x)=\int F_0(t-\b_*'x+(\b_*-\b_0)'y)f_{X|T-\b_*'X}(y|T-\b_*'X=t-\b_*'x)\,dy,
	\end{align*}
	we get:
	\begin{align*}
	&(\b_*-\b_0)'\int_{F_{\b}(t-\b'x)\in[\ee,1-\ee]} x\Bigl\{F_0(t-\b_*'x+(\b_*-\b_0)'x)-F_{\b_*}(t-\b_*' x)\Bigr\}\,dG(t,x)\\
	&=\int_{F_{\b_*}(t-\b_*'x)\in[\ee,1-\ee]} (\b_*-\b_0)'x\biggl\{F_0(t-\b_*'x+(\b_*-\b_0)'x) \\
	&\qquad\left.-\int F_0(t-\b_*'x+(\b_*-\b_0)'y)f_{X|T-\b_*'X}(y|T-\b_*'X=t-\b_*'x)\,dy\right\}\,dG(t,x)\\
	&=\int_{F_{\b_*}(u)\in[\ee,1-\ee]}\hspace{-0.3cm}\text{Cov}\left\{(\b_*-\b_0)'X,F_0(u+(\b_*-\b_0)'X)\Bigm|T-\b_*'X=u\right\}f_{T-\b_*'X}(u)\,du\\
	& =0.
	\end{align*}
	We first note that by Lemma \ref{lemma:population_score} the integrand is positive for all $\b_* \in \Theta$. Suppose that $\b_* \ne \b_0$, then this integral can only be zero if $\text{Cov}((\b_*-\b_0)'X,F_0(u+(\b_*-\b_0)'X)|T-\b_*'X=u)$ is zero for all $u$ such that $F_{\b_*}(u)\in[\ee,1-\ee]$, if $f_{T-\b_*'X}(u)$ stays away from zero on this region (Assumptions (A3)), using continuity of the functions in the integrand (Assumptions (A5)) and the non negativity of the conditional covariance function (see also Remark \ref{remark:Hoeffding}). Since this is excluded by the condition that the covariance $\text{\rm Cov}(X,F_0(u+(\b-\b_0)'X)|T-\b'X=u)$ is continuous in $u$ and not identically zero for $u$ in the region $\{u:\ee\le F_{\b}(u)\le 1-\ee\}$, for each $\b\in \Theta$, we must have: $\b_*=\b_0$.
\end{proof}

\begin{proof}[Proof of Theorem \ref{theorem:method1}, Part 3 (Asymptotic Normality)]
	Before working out the details, we give a kind of ``road map'' for the proof of Theorem \ref{theorem:method1}, Part 3.
\begin{enumerate}
\item

We define $\psi_{1,n}^{(\ee)}$ at $\hat\b_n$ by putting
\begin{align}
\label{estimator1_crossing_def}
\psi_{1,n}^{(\ee)}(\hat\b_n)=0.
\end{align}
Note that, with this definition, $\psi_{1,n}^{(\ee)}(\hat\b_n)$ is in dimension 1 just the convex combination of the left and right limit at $\hat\b_n$:
\begin{align}
\label{convex_comb_def}
\psi_{1,n}^{(\ee)}(\hat\b_n)=\a\psi_{1,n}^{(\ee)}(\hat\b_n-)+(1-\a)\psi_{1,n}^{(\ee)}(\hat\b_n+)=0,
\end{align}
where we can choose $\a\in[0,1]$ in such a way that (\ref{convex_comb_def}) holds. In dimension $d$ higher than one, we can also define $\psi_{1,n}^{(\ee)}$ at $\hat\b_n$ by (\ref{estimator1_crossing_def}) and use the representation of the components as a convex combination
since we have a crossing of zero componentwise. Since the following asymptotic representations are also valid for one-sided limits as used in (\ref{convex_comb_def}) we can use Definition (\ref{estimator1_crossing_def})  and assume $\psi_{1,n}^{(\ee)}(\hat\b_n)=0$.

We show:
\begin{align}
		\label{fundamental_relation}
		&\psi_{1,n}^{(\ee)}(\hat\b_n)\nonumber\\
		&=\int_{F_0(t-\b_0'x)\in[\ee,1-\ee]}\Bigl\{x-\f_0(t-\b_0'x)\Bigr\}\Bigl\{F_0(t-\b_0'x) - F_{\hat\b_n}(t-\hat\b_n'x)\Bigr\}\,dP_0(t,x,\d)\nonumber\\
		&\qquad+\int_{F_0(t-\b_0'x)\in[\ee,1-\ee]}\Bigl\{x-\f_0(t-\b_0'x)\Bigr\}\Bigl\{\d - F_0(t-\b_0'x)\Bigr\}\,d\bigl(\P_n-P_0\bigr)(t,x,\d)\nonumber\\
		&\qquad+o_p\left(n^{-1/2} + \hat\b_n - \b_0\right),
		\end{align}
		where
		\begin{align*}
		\f_0(u)=\f_{\b_0}(u),
		\end{align*}
		and where $\f_{\b}$ is defined by:
		\begin{align}
		\label{fi}
		\f_{\b}(u)=\E\left\{X|T-\b'X=u\right\}.
		\end{align}
		Since $\hat\b_n\stackrel{p}\longrightarrow\b_0$ and 
		\begin{align*}
		&\int_{F_0(t-\b_0'x)\in[\ee,1-\ee]}\Bigl\{x-\f_0(t-\b_0'x)\Bigr\}\Bigl\{F_0(t-\b_0'x) - F_{\hat\b_n}(t-\hat\b_n'x)\Bigr\}\,dP_0(t,x,\d)\\
		&=\psi_{1,\ee}'(\b_0)\left(\hat\b_n-\b_0\right)+o_p\left(\hat\b_n-\b_0\right),
		\end{align*}
		this yields, using the invertibility of $\psi_{1,\ee}'(\b_0)$,
		\begin{align*}
		&\sqrt n (\hat \b_n  - \b_0 ) \\
		&= - \psi_{1,\ee}'(\b_0)^{-1} \biggl\{\sqrt n \int_{F_0(t-\b_0'x)\in[\ee,1-\ee]}\Bigl\{x-\f_{\b_0}(t-\b_0'x)\Bigr\}\Bigl\{\d -F_0(t-\b_0'x)\Bigr\}\\
		&\qquad\qquad\qquad\qquad\qquad\qquad\qquad\qquad\,d\bigl(\P_n-P_0\bigr)(t,x,\d) \biggr\}+ o_p\left(1+ \sqrt n (\hat\b_n - \b_0)\right).
		\end{align*}
		As a consequence, the result of Theorem \ref{theorem:method1} follows, since
		\begin{align*}
		&\sqrt n \int_{F_0(t-\b_0'x)\in[\ee,1-\ee]}\Bigl\{x-\f_0(t-\b_0'x)\Bigr\}\Bigl\{\d - F_0(t-\b_0'x)\Bigr\}\,d\bigl(\P_n-P_0\bigr)(t,x,\d)\\
		&\stackrel{d}{\rightarrow} N(0, B).
		\end{align*}
		\item To show that (\ref{fundamental_relation}) holds, we need entropy results for the functions $u\mapsto \hat F_{n,\b}(u)$ and $u\mapsto \bar\f_{\b,\hat F_{n,\b}}(u)$ (see (\ref{def_bar_phi_{beta,F}}) below). We also have to deal with the simpler parametric functions $F_{\b}$ and $\f_{\b}$, parametrized by the finite dimensional parameter $\b$, which are the population equivalents of $\hat F_{n,\b}$ and $\bar \f_{\b,\hat F_{n,\b}}$.
		\item The result will then follow from the properties of $F_{\b}$ and $\f_{\b}$, together with the closeness of $\hat F_{n,\b}$ to $F_{\b}$ and $\bar\f_{\b,\hat F_{n,\b}}$ to $\f_{\b}$, respectively, and the convergence of $\hat\b_n$ to $\b_0$.
	\end{enumerate}
	
	Let $\bar\f_{\hat\b_n,\hat F_{n,\hat\b_n}}$ be a (random) piecewise constant version of $\f_{\hat\b_n}$, where, for a piecewise constant distribution function $F$ with finitely many jumps at $\t_1<\t_2<\dots$, the function $\bar\f_{\b,F}$ is defined in the following way.
	\begin{align}
	\label{def_bar_phi_{beta,F}}
	\bar\f_{\b,F}(u)=
	\left\{\begin{array}{lll}
	\f_{\b}(\t_i),\,&\mbox{ if }F_{\b}(u)>F(\t_i),\,u\in[\t_i,\t_{i+1}),\\
	\f_{\b}(s),\,&\mbox{ if }F_{\b}(u)=F(s),\mbox{ for some }s\in[\t_i,\t_{i+1}),\\
	\f_{\b}(\t_{i+1}),\,&\mbox{ if }F_{\b}(u)<F(\t_i),\,u\in[\t_i,\t_{i+1}).
	\end{array}
	\right.
	\end{align}
	We can write:
	\begin{align*}
	&\psi_{1,n}^{(\ee)}(\hat\b_n)=\int_{\hat F_{n,\hat\b_n}(t-\hat\b_n'x)\in[\ee,1-\ee]} x\bigl\{\d -\hat F_{n,\hat\b_n}(t-\hat\b_n'x)\bigr\}\,d\P_n(t,x,\d)\\
	&=\int_{\hat F_{n,\hat\b_n}(t-\hat\b_n'x)\in[\ee,1-\ee]}\bigl\{x-\f_{\hat\b_n}(t-\hat\b_n'x)\bigr\}\bigl\{\d -\hat F_{n,\hat\b_n}(t-\hat\b_n'x)\bigr\}\,d\P_n(t,x,\d)\\
	&\qquad+\int_{\hat F_{n,\hat\b_n}(t-\hat\b_n'x)\in[\ee,1-\ee]}\bigl\{\f_{\hat\b_n}(t-\hat\b_n'x)-\bar\f_{\hat\b_n,\hat F_{n,\hat\b_n}}(t-\hat\b_n'x)\bigr\}\\
	&\qquad\qquad\qquad\qquad\qquad\qquad\qquad\qquad\qquad\cdot\bigl\{\d -\hat F_{n,\b}(t-\hat\b_n'x)\bigr\}\,d\P_n(t,x,\d)\\
	&= I + II,
	\end{align*}
	using
	$$
	\int_{\hat F_{n,\hat\b_n}(t-\hat\b_n'x)\in[\ee,1-\ee]} \bar\f_{\hat \b_n,\hat F_{n,\hat\b_n}}(t-\hat\b_n'x)\bigl\{\d -\hat F_{n,\hat\b_n}(t-\hat\b_n'x)\bigr\}\,d\P_n(t,x,\d)=0,
	$$
	by the definition of the MLE $\hat F_{n,\hat\b_n}$ as the slope of the greatest convex minorant of the corresponding cusum diagram, based on the values of the $\dd_i$ in the ordering of the $T_i-\hat\b_n'X_i$ (see also Lemma A.5 on p.380 of \cite{piet_geurt_birgit:10}). 
	
	Since the function $u\mapsto \f_{\b}(u)$ has a totally bounded derivative (as a consequence of (\ref{fi}) and assumption (A5)), we can bound the Euclidean norm of the differences $\f_{\b}(u)-\bar\f_{\b,\hat F_{n,\b}}(u)$ above by a constant times $|\hat F_{n,\b}(u)-F_{\b}(u)|$, for $u\in A_{\ee,\b}$ (see (A2)), i.e.,
	\begin{align*}
	\|\f_{\b}(u)-\bar\f_{\b,\hat F_{n,\b}}(u)\|\leq K_\b |\hat F_{n,\b}(u)-F_{\b}(u)|,
	\end{align*}
	for some constant $K_\b>0$ where the constant $K_\b$ depends on $\b$ through $f_\b$ (see for this technique for example (10.64) in \cite{piet_geurt:14}). By Assumption (A2) we know that $f_\b$ is continuous for all $\b \in \Theta$ such that we can find a constant $K>0$ not depending on $\b$, satisfying,
	\begin{align}
	\label{difference_f-fbar}
	\|\f_{\b}(u)-\bar\f_{\b,\hat F_{n,\b}}(u)\|\leq K|\hat F_{n,\b}(u)-F_{\b}(u)|,
	\end{align}
	uniformly in $\b \in \Theta$. 	Note that we also need $f_{\b}(u)>0$ for applying this, which is ensured by (A2).
	
	
	We have:
	\begin{align*}
	&II =\int_{\hat F_{n,\hat\b_n}(t-\hat\b_n'x)\in[\ee,1-\ee]}\Bigl\{\f_{\hat\b_n}(t-\hat\b_n'x)-\bar\f_{\hat\b_n,\hat F_{n,\hat\b_n}}(t-\hat\b_n'x)\Bigr\}\\
	&\qquad\qquad\qquad\qquad\qquad\qquad\qquad\qquad\qquad\cdot\Bigl\{\d - \hat F_{n,\hat\b_n}(t-\hat\b_n'x)\Bigr\}\,d\P_n(t,x,\d)\\
	&=\int_{\hat F_{n,\hat\b_n}(t-\hat\b_n'x)\in[\ee,1-\ee]}\Bigl\{\f_{\hat\b_n}(t-\hat\b_n'x)-\bar\f_{\hat\b_n,\hat F_{n,\hat\b_n}}(t-\hat\b_n'x)\Bigr\}\\
	&\qquad\qquad\qquad\qquad\qquad\qquad\qquad\qquad\qquad\cdot \Bigl\{\d - \hat F_{n,\hat\b_n}(t-\hat\b_n'x)\Bigr\}\,d(\P_n-P_0)(t,x,\d)\\
	&\quad+\int_{\hat F_{n,\hat\b_n}(u)\in[\ee,1-\ee]}\Bigl\{\f_{\hat\b_n}(u)-\bar\f_{\hat\b_n,\hat F_{n,\hat\b_n}}(u)\Bigr\}\Bigl\{F_{\hat\b_n}(u)-\hat F_{n,\hat\b_n}(u)\Bigr\}f_{T-\hat\b_n'X}(u)\,du\\
	&\quad+\int_{\hat F_{n,\hat\b_n}(t-\hat\b_n'x)\in[\ee,1-\ee]}\Bigl\{\f_{\hat\b_n}(t-\hat\b_n'x)-\bar\f_{\hat\b_n,\hat F_{n,\hat\b_n}}(t-\hat\b_n'x)\Bigr\}\\
	&\qquad\qquad\qquad\qquad\qquad\qquad\qquad\qquad\qquad\cdot\Bigl\{F_0(t-\b_0'x) -F_{\hat\b_n}(t-\hat\b_n'x)\Bigr\}\,dP_0(t,x,\d)\\
	&= II_a + II_b+II_c.
	\end{align*}
		
	First consider $II_a$. Let ${\cal F}$ be the set of piecewise constant distribution functions with finitely many jumps (like the MLE $\hat F_{n,\hat\b_n}$), and let ${\cal K}_1$ be the set of functions
	\begin{align}
	\label{cal_K}
	{\cal K}_1&=\left\{(t,x,\d)\mapsto\bigl(\f_{\b}(t-\b'x)-\bar\f_{\b,F}(t-\b'x)\bigr)\bigl(\d - F(t-\b'x)\bigr)\right.\nonumber\\
	&\left.\qquad\qquad\qquad\qquad\qquad\qquad\qquad\cdot1_{[\ee,1-\ee]}(F(t-\b'x)): F\in{\cal F},\,\b\in\Theta\right\},
	\end{align}
	where $\bar\f_{\b,F}$ is again defined by (\ref{def_bar_phi_{beta,F}}). We add the function which is identically zero to ${\cal K}_1$. 
	
	The functions $u\mapsto F(u)$, for $F\in{\cal F}$ and (as argued above) $u\mapsto \bar\f_{\b,F}(u)$ are bounded functions of uniformly bounded variation.
	Note that, for $F_1,F_2\in\cal F$,
	\begin{align*}
	&F_1(t-\b_1'x)-F_2(t-\b_2'x)\\
	&=F_1(t-\b_1'x)-F_{\b_1}(t-\b_1'x)+F_{\b_1}(t-\b_1'x)-F_{\b_2}(t-\b_2'x)\\
	&\qquad\quad\qquad\quad\qquad\quad\qquad\quad\qquad\quad+F_{\b_2}(t-\b_2'x)-F_2(t-\b_2'x),
	\end{align*}
	and that (see (\ref{def_F_beta})):
	\begin{align*}
	&\left|F_{\b_1}(t-\b_1'x)-F_{\b_2}(t-\b_2'x)\right|\\
	&=\left|\int F_0(t-\b_0'x+(\b_1-\b_0)'(y-x))f_{X|T-\b_1'X}(y|t-\b_1'x)\,dy\right.\\
	&\left.\qquad\qquad\qquad\qquad-\int F_0(t-\b_0'x+(\b_2-\b_0)'(y-x))f_{X|T-\b_2'X}(y|t-\b_2'x)\,dy\right|\\
	&=O\left(|\b_1-\b_2|\right),
	\end{align*}
	by (A2) and (A5).
	
	For the indicator function $1_{[\ee,1]}\left(F(t-\b'x)\right)$ we get, as in (\ref{BV_indicator_function}), using the monotonicity of $F$,
	\begin{align*}
	&1_{[\ee,1]}\left(F(t-\b'x)\right)\\
	&=1_{[\ee,1]}\left(F(t-\b'x)\right)-1_{(1-\ee,1]}\left(F(t-\b'x)\right)=1_{[a_{\ee,F},M]}(t-\b'x)-1_{(b_{\ee,F},M]}(t-\b'x),
	\end{align*}
	for points $a_{\ee,F}\le b_{\ee,F}$, where $M$ is an upper bound for the values of $t-\b'x$, implying that the function
	\begin{align*}
	(t,x)\mapsto 1_{[\ee,1]}\left(F(t-\b'x)\right),
	\end{align*}
is of uniformly bounded variation.
	So the functions in ${\cal K}_1$ are products of functions of uniformly bounded variation, and we therefore get, using Lemma \ref{lemma:bracketing-entropy}
	\begin{align*}
	\sup_{\zeta>0} \zeta H_B\left(\zeta,{\K_1},L_2(P_0)\right)=O(1),
	\end{align*}
	which implies:
	\begin{align*}
	\int_0^{\zeta} H_B\left(u,{\K_1},L_2(P_0)\right)^{1/2}\,du=O\left(\zeta^{1/2}\right),\qquad\zeta>0.
	\end{align*}

	Defining
	\begin{align*}
	k_{\b,F}(t,x,\d)=&\bigl(\f_{\b}(t-\b'x)-\bar\f_{\b,F}(t-\b'x)\bigr)\bigl(\d - F(t-\b'x)\bigr)\\
	&\qquad\qquad\qquad\qquad\qquad\qquad \cdot1_{[\ee,1-\ee]}(F(t-\b'x))
	\end{align*}
	for $F\in{\cal F}$, we get, using (\ref{difference_f-fbar}),
	\begin{align*}
	&\left\{\int \left\|k_{\hat\b_n,\hat F_{n,\hat\b_n}}(t,x,\d)\right\|^2\,dP_0(t,x,\d)\right\}^2\\
	&\le \int_{\hat F_{n,\hat\b_n}(t-\hat\b_n'x)\in[\ee,1-\ee]} \left\|\f_{\hat\b_n}(t-\hat\b_n'x)-\bar\f_{\hat\b_n,\hat F_{n,\hat\b_n}}(t-\hat\b_n'x)\right\|^2\,dP_0(t,x,\d)\\
	&\le K\int_{\hat F_{n,\hat\b_n}(t-\hat\b_n'x)\in[\ee,1-\ee]} \left\{\hat F_{n,\hat\b_n}(t-\hat\b_n'x)-F_{\hat\b_n}(t-\hat\b_n'x)\right\}^2\,dP_0(t,x,\d)\\
	&\le K'\int_{\hat F_{n,\hat\b_n}(u)\in[\ee,1-\ee]} \left\{\hat F_{n,\hat\b_n}(u)-F_{\hat\b_n}(u)\right\}^2\,du\\
	&\stackrel{p}\longrightarrow0,
	\end{align*}
	for constants $K,K'>0$.
	This implies 
	\begin{align}
	\label{II_a}
	\sqrt n II_a =\sqrt n\int k_{\hat\b_n,\hat F_{n,\hat\b_n}}(t,x,\d)\,d(\P_n-P_0)(t,x,\d)=o_p(1),
	\end{align}
	by an application of Lemma \ref{lemma:equi}.

	Using (\ref{difference_f-fbar}),  $\|F_{\hat\b_n}-\hat F_{n,\hat\b_n}\|_2=O_p(n^{-1/3})$ and the Cauchy-Schwarz inequality on the second term we get,
	$$
	II_b = O_p(n^{-2/3}).
	$$
	
	The functions $\f_{\b}$ and $F_{\b}$ are of a simple parametric nature, since
	\begin{align*}
	\f_{\b}=\E(X|T-\b'X),
	\end{align*}
	and
	\begin{align*}
	F_{\b}(u)=\int F_0(u+(\b-\b_0)'x)f_{X|T-\b'X}(x|T-\b'X=u)\,dx,
	\end{align*}
	see (\ref{def_F_beta}).
	Moreover, since:
	\begin{align}
	\label{expansion_F_beta}
	F_{\hat\b_n}(u)&=F_0(u)+(\hat\b_n-\b_0)'\int x f_0(u)f_{X|T-\hat\b_n'X}(x|u)\,dx+o_p(\hat\b_n-\b_0)\nonumber\\
	&=F_0(u)+(\hat\b_n-\b_0)'f_0(u)\E\{X|T-\hat\b_n'X=u\}+o_p(\hat\b_n-\b_0),
	\end{align}
	and since the difference $\f_{\hat\b_n}-\bar\f_{\hat\b_n,\hat F_{n,\hat\b_n}}$ converges to zero, we get for the third term $II_c$:
	\begin{align*}
	II_c&= \int_{\hat F_{n,\hat\b_n}(t-\hat\b_n'x)\in[\ee,1-\ee]}\Bigl\{\f_{\hat\b_n}(t-\hat\b_n'x)-\bar\f_{\hat\b_n,\hat F_{n,\hat\b_n}}(t-\hat\b_n'x)\Bigr\}\\
	&\qquad\qquad\qquad\qquad\qquad\qquad\qquad\qquad\cdot\Bigl\{F_0(t-\b_0'x)-F_{\hat\b_n}(t-\hat\b_n'x)\Bigr\}\,dP_0(t,x,\d)\\
	&=o_p\left(\hat\b_n-\b_0\right).
	\end{align*}
	We therefore conclude,
	\begin{align*}
	\psi_{1,n}^{(\ee)}(\hat\b_n)&= I + o_p\left(n^{-1/2}+\hat\b_n-\b_0\right).
	\end{align*}
	We now write,
	\begin{align*}
	I&=\int_{\hat F_{n,\hat\b_n}(t-\hat\b_n'x)\in[\ee,1-\ee]}\Bigl\{x-\f_{\hat\b_n}(t-\hat\b_n'x)\Bigr\}\Bigl\{\d - \hat F_{n,\hat\b_n}(t-\hat\b_n'x)\Bigr\}\,d\P_n(t,x,\d)\\
	&=\int_{\hat F_{n,\hat\b_n}(t-\hat\b_n'x)\in[\ee,1-\ee]}\Bigl\{x-\f_{\hat\b_n}(t-\hat\b_n'x)\Bigr\}\Bigl\{\d - F_{\hat\b_n}(t-\hat\b_n'x)\Bigr\}\,d\P_n(t,x,\d)\\
	&\qquad+\int_{\hat F_{n,\hat\b_n}(t-\hat\b_n'x)\in[\ee,1-\ee]}\Bigl\{x-\f_{\hat\b_n}(t-\hat\b_n'x)\Bigr\}\\
	&\qquad\qquad\qquad\qquad\qquad\qquad\quad\cdot\Bigl\{F_{\hat\b_n}(t-\hat\b_n'x)-\hat F_{n,\hat\b_n}(t-\hat\b_n'x)\Bigr\}\,d\P_n(t,x,\d)\\
	&= I_a + I_b.
	\end{align*}

	We get:
	\begin{align*}
	&I_a\\
	&=\int_{\hat F_{n,\hat\b_n}(t-\hat\b_n'x)\in[\ee,1-\ee]}\Bigl\{x-\f_{\hat\b_n}(t-\hat\b_n'x)\Bigr\}\Bigl\{\d - F_{\hat\b_n}(t-\hat\b_n'x)\Bigr\}\,d\P_n(t,x,\d)\\
	&=\int_{\hat F_{n,\hat\b_n}(t-\hat\b_n'x)\in[\ee,1-\ee]}\Bigl\{x-\f_{\hat\b_n}(t-\hat\b_n'x)\Bigr\}\Bigl\{\d -F_{\hat\b_n}(t-\hat\b_n'x)\Bigr\}\,d(\P_n-P_0)(t,x,\d)\\
	&\quad+\int_{\hat F_{n,\hat\b_n}(t-\hat\b_n'x)\in[\ee,1-\ee]}\Bigl\{x-\f_{\hat\b_n}(t-\hat\b_n'x)\Bigr\}\Bigl\{F_0(t-\b_0'x) -F_{\hat\b_n}(t-\hat\b_n'x)\Bigr\}\,dP_0(t,x,\d).
	\end{align*}
	For the second integral on the right-hand side we get:
	\begin{align*}
	&\int_{\hat F_{n,\hat\b_n}(t-\hat\b_n'x)\in[\ee,1-\ee]}\Bigl\{x-\f_{\hat\b_n}(t-\hat\b_n'x)\Bigr\}\Bigl\{F_0(t-\b_0'x)-F_{\hat\b_n}(t-\hat\b_n'x)\Bigr\}\,dP_0(t,x,\d)\\
	&=\int_{\hat F_{n,\hat\b_n}(t-\hat\b_n'x)\in[\ee,1-\ee]}\Bigl\{x-\f_{\hat\b_n}(t-\hat\b_n'x)\Bigr\}\Bigl\{F_0(t-\hat\b_n'x) -F_{\hat\b_n}(t-\hat\b_n'x)\Bigr\}\,dP_0(t,x,\d) \\
	&\quad +\int_{\hat F_{n,\hat\b_n}(t-\hat\b_n'x)\in[\ee,1-\ee]}\Bigl\{x-\f_{\hat\b_n}(t-\hat\b_n'x)\Bigr\}\Bigl\{F_0(t-\b_0'x)-F_0(t-\hat\b_n'x)\Bigr\}\,dP_0(t,x,\d),
	\end{align*}
	and next we get, using the definition of $\f_\b$ given in (\ref{fi}), for the first integral on the right-hand side of the last display:
	\begin{align*}
	&\int_{\hat F_{n,\hat\b_n}(t-\hat\b_n'x)\in[\ee,1-\ee]}\Bigl\{x-\f_{\hat\b_n}(t-\hat\b_n'x)\Bigr\}\Bigl\{F_0(t-\hat\b_n'x)-F_{\hat\b_n}(t-\hat\b_n'x)\Bigr\}\,dP_0(t,x,\d) \\
	&=\int_{\hat F_{n,\hat\b_n}(u)\in[\ee,1-\ee]}\Bigl\{x-\f_{\hat\b_n}(u)\Bigr\}\Bigl\{F_0(u) - F_{\hat\b_n}(u)\Bigr\}
	f_{T-\hat\b_n'X}(u)\,f_{X|T-\hat\b_n'X}(x|u)\,du\,dx\\
	&=0.
	\end{align*}
	
	Furthermore, we get by expanding $F_0(t-\b'x)$ and by the continuity of $\b\mapsto\f_{\b}(u)$ at $\b=\b_0$
	\begin{align*}
	&\int_{\hat F_{n,\hat\b_n}(t-\hat\b_n'x)\in[\ee,1-\ee]}\Bigl\{x-\f_{\hat\b_n}(t-\hat\b_n'x)\Bigr\}\Bigl\{F_0(t-\b_0'x)-F_0(t-\hat\b_n'x)\Bigr\}\,dP_0(t,x,\d)\\
	&=\int_{\hat F_{n,\hat\b_n}(t-\hat\b_n'x)\in[\ee,1-\ee]}\Bigl\{x-\f_{\hat\b_n}(t-\hat\b_n'x)\Bigr\}(\hat\b_n-\b_0)'xf_0(t-\b_0'x)\,dP_0(t,x,\d)\\
	&\qquad
	+o_p\left(\hat\b_n-\b_0\right)\\
	&=\left\{\int_{\hat F_{n,\hat\b_n}(t-\hat\b_n'x)\in[\ee,1-\ee]}\Bigl\{x-\f_0(t-\b_0'x)\Bigr\}x'f_0(t-\b_0'x)\,dP_0(t,x,\d)\right\}(\hat\b_n-\b_0)\\
	&\qquad+o_p\left(\hat\b_n-\b_0\right).
	\end{align*}
	Finally we get from the consistency of $\hat F_{n,\hat\b_n}$:
	\begin{align*}
	&\left\{\int_{\hat F_{n,\hat\b_n}(t-\hat\b_n'x)\in[\ee,1-\ee]}\Bigl\{x-\f_0(t-\b_0'x)\Bigr\}x'f_0(t-\b_0'x)\,dP_0(t,x,\d)\right\}\left(\hat\b_n-\b_0\right)\\
	&=\left\{\int_{F_0(t-\b_0'x)\in[\ee,1-\ee]}\Bigl\{x-\f_0(t-\b_0'x)\Bigr\}x'f_0(t-\b_0'x)\,dP_0(t,x,\d)\right\}\left(\hat\b_n-\b_0\right)\\
	&\qquad+o_p\left(\hat\b_n-\b_0\right)\\
	&= \psi_{1,\ee}'(\b_0)(\hat\b_n - \b_0)+o_p\left(\hat\b_n-\b_0\right).
	\end{align*}
	So we obtain:
	\begin{align*}
	I_a&=\int_{\hat F_{n,\hat\b_n}(t-\hat\b_n'x)\in[\ee,1-\ee]}\Bigl\{x-\f_{\hat\b_n}(t-\hat\b_n'x)\Bigr\}\Bigl\{\d - F_{\hat\b_n}(t-\hat\b_n'x)\Bigr\}\,d(\P_n-P_0)(t,x,\d)\\
	&\qquad+ \psi_{1,\ee}'(\b_0)(\hat\b_n - \b_0)+o_p\left(\hat\b_n-\b_0\right).
	\end{align*}
	
	We now proceed again as before, and define 
	${\cal K}_1'$ to be the set of functions
	\begin{align*}
	{\cal K}_1'&=\left\{(t,x,\d)\mapsto\bigl(x-\f_{\b}(t-\b'x)\bigr)\bigl(\d -F_{\b}(t-\b'x)\bigr)1_{[\ee,1-\ee]}(F(t-\b'x))\right.\\
	&\left.\qquad\qquad\qquad\qquad\qquad\qquad\qquad\qquad\qquad\qquad\qquad\ : F\in{\cal F},\,\b\in\Theta\right\}.
	\end{align*}
	We add the function
	\begin{align*}
	(t,x,\d)\mapsto \bigl(x-\f_0(t-\b_0'x)\bigr)\bigl(\d -F_0(t-\b_0'x)\bigr)1_{[\ee,1-\ee]}(F_0(t-\b_0'x))\
	\end{align*}
	to the set ${\cal K}_1'$.	 The distance $d$ is defined by (\ref{distance_cal_K}) again, with ${\cal K}$ replaced by ${\cal K}_1'$.
	We therefore get, similarly as before, using Lemma \ref{lemma:bracketing-entropy},
	\begin{align*}
	\sup_{\zeta>0} \zeta H_B\left(\zeta,{\K_1'},L_2(P_0)\right)=O(1),
	\end{align*}
	which implies:
	\begin{align*}
	\int_0^{\zeta} H_B\left(u,{\K_1'},L_2(P_0)\right)^{1/2}\,du=O\left(\zeta^{1/2}\right),\qquad\zeta>0.
	\end{align*}
	Moreover, we get:
	\begin{align*}
	&\bigl(x-\f_{\hat\b_n}(t-\hat\b_n'x)\bigr)\bigl(\d -F_{\hat\b_n}(t-\hat\b_n'x)\bigr)1_{[\ee,1-\ee]}\left(\hat F_{n,\hat\b_n}(t-\hat\b_n'x)\right)\\
	&\qquad\qquad-\bigl(x-\f_0(t-\b_0'x)\bigr)\bigl(\d -F_0(t-\b_0'x)\bigr)1_{[\ee,1-\ee]}\left(F_0(t-\b_0'x)\right)\\
	&=\Bigl\{\bigl(x-\f_{\hat\b_n}(t-\hat\b_n'x)\bigr)\bigl(\d -F_{\hat\b_n}(t-\hat\b_n'x)\bigr)\\
	&\qquad\qquad-\bigl(x-\f_0(t-\b_0'x)\bigr)\bigl(\d -F_0(t-\b_0'x)\bigr)\Bigr\}1_{[\ee,1-\ee]}\left(\hat F_{n,\hat\b_n}(t-\hat\b_n'x)\right)\\
	&\qquad +\bigl(x-\f_0(t-\b_0'x)\bigr)\bigl(\d -F_0(t-\b_0'x)\bigr)\\
	&\qquad\qquad\qquad\qquad\qquad\cdot\left\{
	1_{[\ee,1-\ee]}\left(F_0(t-\b_0'x)\right)-1_{[\ee,1-\ee]}\left(\hat F_{n,\hat\b_n}(t-\hat\b_n'x)\right)\right\}\\
	&=A_n(t,x,\d)+B_n(t,x,\d),
	\end{align*}
	implying
	\begin{align*}
	&\int\Bigl\{\bigl(x-\f_{\hat\b_n}(t-\hat\b_n'x)\bigr)\bigl(\d -F_{\hat\b_n}(t-\hat\b_n'x)\bigr)1_{[\ee,1-\ee]}\left(\hat F_{n,\hat\b_n}(t-\hat\b_n'x)\right)\\
	&\qquad-\bigl(x-\f_0(t-\b_0'x)\bigr)\bigl(\d -F_0(t-\b_0'x)\bigr)1_{[\ee,1-\ee]}\left(F_0(t-\b_0'x)\right)\Bigr\}^2\,dP_0(t,x,\d)\\
	&\le 2\int\left\{A_n(t,x,\d)^2+B_n(t,x,\d)^2\right\}\,dP_0(t,x,\d)=o_p(1),
	\end{align*}
	since the integrals w.r.t.\ $A_n^2$ and $B_n^2$ tends to zero using the consistency of $\hat \b_n$ and $\hat F_{n,\hat\b_n}$.

	Hence we get from Lemma \ref{lemma:equi}:
	\begin{align*}
	I_a&=\int_{ F_{0}(t-\b_0'x)\in[\ee,1-\ee]}\Bigl\{x-\f_0(t-\b_0'x)\Bigr\}\Bigl\{\d -F_0(t-\b_0'x)\Bigr\}\,d(\P_n-P_0)(t,x,\d)\\
	&\qquad+ \psi_{1,\ee}'(\b_0)(\hat\b_n - \b_0)+o_p\left(\hat\b_n-\b_0\right)+o_p\left(n^{-1/2}\right).
	\end{align*}
	This means that we get the conclusion
	\begin{align}
	\label{conclusion_Th4.1}
	&\int_{F_0(t-\b_0'x)\in[\ee,1-\ee]}\Bigl\{x-\f_0(t-\b_0'x)\Bigr\}\Bigl\{\d -F_0(t-\b_0'x)\Bigr\}\,d\left(\P_n-P_0\right)(t,x,\d)\nonumber\\
	&=-\psi_{1,\ee}'(\b_0)(\hat\b_n - \b_0) + o_p\left(\hat\b_n - \b_0 \right)+  o_p\left( n^{-1/2}\right),
	\end{align}
	if we can show that $I_b$ is negligible.
	
	Since, by definition of $\f_\b$ given in (\ref{fi}),
	$$ 	\int_{\hat F_{n,\hat\b_n}(t-\hat\b_n'x)\in[\ee,1-\ee]}\Bigl\{x-\f_{\hat\b_n}(t-\hat\b_n'x)\Bigr\}\,f_{X|T-\hat\b_n'X}(x|t-\hat\b_n'x)\,dx=0, 	$$
	we have
	\begin{align*}
	I_b&= \int_{\hat F_{n,\hat\b_n}(t-\hat\b_n'x)\in[\ee,1-\ee]}\Bigl\{x-\f_{\hat\b_n}(t-\hat\b_n'x)\Bigr\}\\
	&\qquad\qquad\qquad\qquad\qquad\cdot\Bigl\{F_{\hat\b_n}(t-\hat\b_n'x)-\hat F_{n,\hat\b_n}(t-\hat\b_n'x)\Bigr\}\,d(\P_n-P_0)(t,x,\d).
	\end{align*}
	The negligibility of $I_b$ now follows in the same way as (\ref{II_a}), using the parametric nature of the function $\f_{\b}$ and the entropy properties of the class of functions
	$$
	u\mapsto \hat F_{n,\hat\b_n}(u)-F_{\hat\b_n}(u).
	$$
	The conclusion now follows from (\ref{conclusion_Th4.1}).
\end{proof}

\begin{remark}
	{\rm Note that the proof above yields the representation
		\begin{align*}
		&\hat\b_n-\b_0\\
		&\sim n^{-1}\psi_{1,\ee}'(\b_0)^{-1}\sum_{i=1}^n (X_i-\E(X_i|T-\b_0'X))\left\{\dd_i-F_0(T_i-\b_0'X_i\right\},
		\end{align*}
		where $\psi_{1,\ee}'(\b_0)$ is given by (\ref{derivative_score}).
	}
\end{remark}

\section{Asymptotic behavior of the efficient estimate based on  the MLE $\hat F_{n,\b}$}
In this section we prove the asymptotic efficiency of the score estimator defined in Section \ref{subsection:MLE2}. The proof of existence of the root and the consistency proof for the score estimator is similar to the proof of existence and consistency of the first score estimator defined in Section \ref{subsection:MLE}, thus omitted.

\subsection{Asymptotic normality of the efficient score estimator}
\begin{proof}[Proof of Theorem \ref{theorem:method2} (Asymptotic Normality)]
	Since the proof is very similar to the proof of Theorem \ref{theorem:method1}, we only give the main steps of the proof. 
	As in the proof of Theorem \ref{theorem:method1}, we can define $\psi_{2,nh}^{(\ee)}$ at $\hat\b_n$ by
	\begin{align*}
	\psi_{2,nh}^{(\ee)}(\hat\b_n)=0,
	\end{align*}
	and $\psi_{2,nh}^{(\ee)}(\hat\b_n)$ is then a combination of one-sided limits at $\hat\b_n$.
	
	We prove that:
	\begin{align}
	\label{decomposition2}
	&\psi_{2,nh}^{(\ee)}(\hat \b_n)  \\
	&= \int_{F_0(t-\b_0'x)\in[\ee,1-\ee]}\frac{\left\{xf_0(t-\b_0'x)-\varphi_{\b_0}(t-\b_0'x) \right\}\left\{\d - F_0(t-\b_0'x)\right\}}{F_0(t-\b_0'x)\{1-F_0(t-\b_0'x)\}}\,d\P_n(t,x,\d) \nonumber\\
	&\quad+ \psi_{2,\ee}'(\b_0)(\hat \b_n - \b_0) + o_p\left(n^{-1/2} + (\hat \b_n - \b_0)\right),\nonumber
	\end{align}
	where $\varphi_\b$ is defined by
	\begin{align}
	\label{phi2}
	\varphi_{\b}(t-\b'x) = \E(X|T-\b'X = t- \b'x)f_\b(t-\b'x),
	\end{align}
	and $\psi_{2,\ee}$ is defined by,
	\begin{align}
	\label{psi_2_pop}
	&\psi_{2,\ee}(\b) \\
	&= \int_{F_\b(t-\b'x)\in[\ee,1-\ee]}\frac{\Bigl\{xf_\b(t-\b'x)-\varphi_{\b}(t-\b'x) \Bigr\}\Bigl\{\d -F_\b(t-\b'x)\Bigr\}}{F_\b(t-\b'x)\{1-F_\b(t-\b'x)\}}\,dP_0(t,x,\d).  \nonumber
	\end{align}
	Straightforward calculations show that,
	\begin{align*}
	\psi_{2,\ee}'(\b_0) &= \int_{F_0(t-\b_0'x)\in[\ee,1-\ee]}\frac{\left\{xf_0(t-\b_0'x)-\varphi_{\b_0}(t-\b_0'x) \right\}^2}{F_0(t-\b_0'x)\{1-F_0(t-\b_0'x)\}}\,dP_0(t,x,\d)\\
	&=\E_\ee\left\{  \frac{ f_0(T- \b_0'X)^2\,\left\{ X - \E(X| T- \b_0'X) \right\}\left\{ X - \E(X| T- \b_0'X) \right\}'}{F_0(T- \b_0'X)\{1-F_0(T- \b_0'X)\}} \right\} \\
	&= I_\ee(\b_0).
	\end{align*}
	(See also the derivation of the derivative $\psi_\ee'$ for the first score equation in the proof of Theorem \ref{theorem:method1}, Part 1).
	Since
	\begin{align*}
	&\sqrt n \int_{F_0(t-\b_0'x)\in[\ee,1-\ee]}\frac{\left\{xf_0(t-\b_0'x)-\varphi_{\b_0}(t-\b_0'x) \right\}\left\{\d - F_0(t-\b_0'x)\right\}}{F_0(t-\b_0'x)\{1-F_0(t-\b_0'x)\}}\,d\P_n(t,x,\d) \\
	&\stackrel{d}{\rightarrow} N(0,I_\ee(\b_0)),
	\end{align*}	
	(\ref{decomposition2}) implies, using the non-singularity of $\psi_{2,\ee}'(\b_0)$ and the consistency of $\hat \b_n$, 
	\begin{align*}
	&\sqrt n (\hat \b_n - \b_0)\\
	& = -\psi_{2,\ee}'(\b_0)^{-1}\Bigl\{ \sqrt n \int_{F_0(t-\b_0'x)\in[\ee,1-\ee]}\frac{xf_0(t-\b_0'x)-\varphi_{\b_0}(t-\b_0'x)}{F_0(t-\b_0'x)\{1-F_0(t-\b_0'x)\}}\\
	&\qquad\qquad\qquad\qquad\qquad\qquad\qquad\qquad\qquad \cdot\left\{\d -F_0(t-\b_0'x)\right\}\,d\P_n(t,x,\d)\Bigr\} \\
	&\qquad + o_p(1+ \sqrt n (\hat \b_n - \b_0))  \\
	& \stackrel{d}{\rightarrow} N\left(0,I_\ee(\b_0)^{-1}\right).
	\end{align*}
	
	Let, analogously to the start of the proof of Theorem \ref{theorem:method1}, $\bar\varphi_{\hat\b_n,\hat F_{n,\hat\b_n}}$ be a (random) piecewise constant version of $\varphi_{\hat\b_n}$, where, for a piecewise constant distribution function $F$ with finitely many jumps at $\t_1<\t_2<\dots$, the function $\bar\varphi_{\b,F}$ is defined in the following way.
	\begin{align}
	\label{def_bar_phi2}
	\bar\varphi_{\b,F}(u)=
	\left\{\begin{array}{lll}
	\varphi_{\b}(\t_i),\,&\mbox{ if }F_{\b}(u)>F(\t_i),\,u\in[\t_i,\t_{i+1}),\\
	\varphi_{\b}(s),\,&\mbox{ if }F_{\b}(u)=F(s),\mbox{ for some }s\in[\t_i,\t_{i+1}),\\
	\varphi_{\b}(\t_{i+1}),\,&\mbox{ if }F_{\b}(u)<F(\t_i),\,u\in[\t_i,\t_{i+1}).
	\end{array}
	\right.
	\end{align}
	We now have:
	\begin{align*}
	&\psi_{2,nh}^{(\ee)}(\b)\\
	&= \int_{\hat F_{n,\hat\b_n}(t-\hat\b_n'x)\in[\ee,1-\ee]}x f_{nh,\hat\b_n}(t-\hat\b_n'x)\frac{\d -\hat F_{n,\hat\b_n}(t-\hat\b_n'x)}{\hat F_{n,\hat\b_n}(t-\hat\b_n'x)\{1-\hat F_{n,\hat\b_n}(t-\hat\b_n'x)\}}\,d\P_n(t,x,\d) \nonumber
	\end{align*}
	\begin{align*}
	&= \int_{\hat F_{n,\hat\b_n}(t-\hat\b_n'x)\in[\ee,1-\ee]}\left\{x f_{nh,\hat\b_n}(t-\hat\b_n'x)-\varphi_{\hat\b_n}(t-\hat\b_n'x)\right\}\nonumber\\
	&\qquad\qquad\qquad\qquad\quad\qquad\cdot\frac{\d - \hat F_{n,\hat\b_n}(t-\hat\b_n'x)}{\hat F_{n,\hat\b_n}(t-\hat\b_n'x)\{1-\hat F_{n,\hat\b_n}(t-\hat\b_n'x)\}}\,d\P_n(t,x,\d) \nonumber\\
	&\qquad+ \int_{\hat F_{n,\hat\b_n}(t-\hat\b_n'x)\in[\ee,1-\ee]}\left\{\varphi_{\hat\b_n}(t-\hat\b_n'x)-\bar\varphi_{n,\hat F_{n,\hat\b_n}}(t-\hat\b_n'x)\right\}\nonumber\\
	&\qquad\qquad\qquad\qquad\quad\qquad\cdot\frac{\d - \hat F_{n,\hat\b_n}(t-\hat\b_n'x)}{\hat F_{n,\hat\b_n}(t-\hat\b_n'x)\{1-\hat F_{n,\hat\b_n}(t-\hat\b_n'x)\}}\,d\P_n(t,x,\d)\nonumber\\
	&=I+II.
	\end{align*}
	Let ${\cal F}$ be the set of piecewise constant distribution functions with finitely many jumps (like the MLE $\hat F_{n,\hat\b_n}$), and let ${\cal K}_2$ be the set of functions
	\begin{align}
	\label{cal_K2}
	{\cal K}_2&=\left\{(t,x,\d)\mapsto\big\{\varphi_{\b}(t-\b'x)-\bar\varphi_{\b,F}(t-\b'x)\bigr\}\frac{\d -F(t-\b'x)\bigr)}{F(t-\b'x)\{1-F(t-\b'x)\}}\right.\nonumber\\
	&\left.\qquad\qquad\qquad\qquad\qquad\qquad\qquad\qquad\qquad\qquad\qquad\cdot1_{[\ee,1-\ee]}(F(t-\b'x)): F\in{\cal F},\,\b\in\Theta\right\},
	\end{align}
	where $\bar\varphi_{\b,F}$ is defined by (\ref{def_bar_phi2}). We add the function which is identically zero to ${\cal K}_2$. As in the proof of Theorem \ref{theorem:method1}, the functions are uniformly bounded and also of uniformly bounded variation, using conditions (A4) and (A5). For $k_1$ and $k_2$ in ${\cal K}_2$, we define
	\begin{align}
	\label{distance_cal_K2}
	d(k_1,k_2)^2=\int \left\|k_1-k_2\right\|^2\,dP_0,\qquad k_1,k_2\in{\cal K}_2.
	\end{align}
	For this distance, we therefore get, similarly as before, using Lemma \ref{lemma:bracketing-entropy},
	\begin{align*}
	\sup_{\zeta>0} \zeta H_B\left(\zeta,{\K_2},L_2(P_0)\right)=O(1),
	\end{align*}
	which implies:
	\begin{align*}
	\int_0^{\zeta} H_B\left(u,{\K_2},L_2(P_0)\right)^{1/2}\,du=O\left(\zeta^{1/2}\right),\qquad\zeta>0.
	\end{align*}
	Note that the indicator function keeps $F(t-\b'x)$ away from zero and one, which is essential for getting the bounded variation property.
	
	Following the same steps as in the proof of Theorem \ref{theorem:method1}, we get:
	\begin{align*}
	II=o_p\left(n^{-1/2}+\hat\b_n-\b_0\right).
	\end{align*}
	We now write,
	\begin{align*}
	I&=\int_{\hat F_{n,\hat\b_n}(t-\hat\b_n'x)\in[\ee,1-\ee]}\Bigl\{xf_{nh,\hat\b_n}(t-\hat\b_n'x)-\varphi_{\hat\b_n}(t-\hat\b_n'x)\Bigr\}\\
	&\qquad\qquad\qquad\qquad\qquad\qquad\qquad\qquad\qquad\cdot\frac{\d -\hat F_{n,\hat\b_n}(t-\hat\b_n'x)}{\hat F_{n,\hat\b_n}(t-\hat\b_n'x)\{1-\hat F_{n,\hat\b_n}(t-\hat\b_n'x)\}}\,d\P_n(t,x,\d)\\
	&=\int_{\hat F_{n,\hat\b_n}(t-\hat\b_n'x)\in[\ee,1-\ee]}\Bigl\{xf_{nh,\hat\b_n}(t-\hat\b_n'x)-\varphi_{\hat\b_n}(t-\hat\b_n'x)\Bigr\}\\
	&\qquad\qquad\qquad\qquad\qquad\qquad\qquad\qquad\qquad\cdot\frac{\d - F_{\hat\b_n}(t-\hat\b_n'x)}{\hat F_{n,\hat\b_n}(t-\hat\b_n'x)\{1-\hat F_{n,\hat\b_n}(t-\hat\b_n'x)\}}\,d\P_n(t,x,\d)\\
	&\qquad+\int_{\hat F_{n,\hat\b_n}(t-\hat\b_n'x)\in[\ee,1-\ee]}\Bigl\{xf_{nh,\hat\b_n}(t-\hat\b_n'x)-\varphi_{\hat\b_n}(t-\hat\b_n'x)\Bigr\}\\
	&\qquad\qquad\qquad\qquad\qquad\qquad\qquad\qquad\qquad\cdot\frac{F_{\hat\b_n}(t-\hat\b_n'x) - \hat F_{n,\hat\b_n}(t-\hat\b_n'x)}{\hat F_{\hat\b_n}(t-\hat\b_n'x)\{1-\hat F_{n,\hat\b_n}(t-\hat\b_n'x)\}}\,dP_0(t,x,\d)\\
	&\qquad+\int_{\hat F_{n,\hat\b_n}(t-\hat\b_n'x)\in[\ee,1-\ee]}\Bigl\{xf_{nh,\hat\b_n}(t-\hat\b_n'x)-\varphi_{\hat\b_n}(t-\hat\b_n'x)\Bigr\}\\
	&\qquad\qquad\qquad\qquad\qquad\qquad\qquad\qquad\qquad\cdot\frac{F_{\hat\b_n}(t-\hat\b_n'x) - \hat F_{n,\hat\b_n}(t-\hat\b_n'x)}{\hat F_{\hat\b_n}(t-\hat\b_n'x)\{1-\hat F_{n,\hat\b_n}(t-\hat\b_n'x)\}}\,d(\P_n-P_0)(t,x,\d)\\
	&= I_a + I_b +I_c.
	\end{align*}
	
For the term $I_b$ we get:
\begin{align*}
&\int_{\hat F_{n,\hat\b_n}(t-\hat\b_n'x)\in[\ee,1-\ee]}\Bigl\{xf_{nh,\hat\b_n}(t-\hat\b_n'x)-\varphi_{\hat\b_n}(t-\hat\b_n'x)\Bigr\}\\
	&\qquad\qquad\qquad\qquad\qquad\qquad\qquad\qquad\qquad\cdot\frac{F_{\hat\b_n}(t-\hat\b_n'x) - \hat F_{n,\hat\b_n}(t-\hat\b_n'x)}{\hat F_{\hat\b_n}(t-\hat\b_n'x)\{1-\hat F_{n,\hat\b_n}(t-\hat\b_n'x)\}}\,dP_0(t,x,\d)\\
&=\int_{\hat F_{n,\hat\b_n}(t-\hat\b_n'x)\in[\ee,1-\ee]}\left\{x-\E(X|T-\hat\b_n'X = t- \hat\b_n'x)\right\}f_{nh,\hat\b_n}(t-\hat\b_n'x)\\
	&\qquad\qquad\qquad\qquad\qquad\qquad\qquad\qquad\qquad\cdot\frac{F_{\hat\b_n}(t-\hat\b_n'x) - \hat F_{n,\hat\b_n}(t-\hat\b_n'x)}{\hat F_{\hat\b_n}(t-\hat\b_n'x)\{1-\hat F_{n,\hat\b_n}(t-\hat\b_n'x)\}}\,dP_0(t,x,\d)\\
&\quad+\int_{\hat F_{n,\hat\b_n}(t-\hat\b_n'x)\in[\ee,1-\ee]}\Bigl\{f_{nh,\hat\b_n}(t-\hat\b_n'x)-f_{\hat\b_n}(t-\hat\b_n'x)\Bigr\}\E(X|T-\hat\b_n'X = t- \hat\b_n'x)\\
&\qquad\qquad\qquad\qquad\qquad\qquad\qquad\qquad\qquad\cdot\frac{F_{\hat\b_n}(t-\hat\b_n'x) - \hat F_{n,\hat\b_n}(t-\hat\b_n'x)}{\hat F_{\hat\b_n}(t-\hat\b_n'x)\{1-\hat F_{n,\hat\b_n}(t-\hat\b_n'x)\}}\,dP_0(t,x,\d)\\
&=\int_{\hat F_{n,\hat\b_n}(t-\hat\b_n'x)\in[\ee,1-\ee]}\Bigl\{f_{nh,\hat\b_n}(t-\hat\b_n'x)-f_{\hat\b_n}(t-\hat\b_n'x)\Bigr\}\E(X|T-\hat\b_n'X = t- \hat\b_n'x)\\
&\qquad\qquad\qquad\qquad\qquad\qquad\qquad\qquad\qquad\cdot\frac{F_{\hat\b_n}(t-\hat\b_n'x) - \hat F_{n,\hat\b_n}(t-\hat\b_n'x)}{\hat F_{\hat\b_n}(t-\hat\b_n'x)\{1-\hat F_{n,\hat\b_n}(t-\hat\b_n'x)\}}\,dP_0(t,x,\d)\\
&=\int_{\hat F_{n,\hat\b_n}(u)\in[\ee,1-\ee]}\Bigl\{f_{nh,\hat\b_n}(u)-f_{\hat\b_n}(u)\Bigr\}\E(X|T-\hat\b_n'X =u)
\frac{F_{\hat\b_n}(u) - \hat F_{n,\hat\b_n}(u)}{\hat F_{\hat\b_n}(u)\{1-\hat F_{n,\hat\b_n}u)\}}\,f_{T-\hat\b_n'X}(u)\,du.
\end{align*}
Furthermore,
\begin{align*}
&=\int_{\hat F_{n,\hat\b_n}(u)\in[\ee,1-\ee]}\Bigl\{f_{nh,\hat\b_n}(u)-f_{\hat\b_n}(u)\Bigr\}\E(X|T-\hat\b_n'X =u)
\frac{F_{\hat\b_n}(u) - \hat F_{n,\hat\b_n}(u)}{\hat F_{\hat\b_n}(u)\{1-\hat F_{n,\hat\b_n}u)\}}\,f_{T-\hat\b_n'X}(u)\,du.\\
&=h^{-2}\int_{\hat F_{n,\hat\b_n}(u)\in[\ee,1-\ee]}\Bigl\{\int K'((u-v)/h)\hat F_{n,\hat\b_n}(v)\,dv-f_{\hat\b_n}(u)\Bigr\}\E(X|T-\hat\b_n'X =u)\\
&\qquad\qquad\qquad\qquad\qquad\qquad\qquad\qquad\qquad\qquad\qquad\cdot\frac{F_{\hat\b_n}(u) - \hat F_{n,\hat\b_n}(u)}{\hat F_{\hat\b_n}(u)\{1-\hat F_{n,\hat\b_n}(u)\}}\,f_{T-\hat\b_n'X}(u)\,du\\
&=h^{-2}\int_{\hat F_{n,\hat\b_n}(u)\in[\ee,1-\ee]}\int K'((u-v)/h)\left\{\hat F_{n,\hat\b_n}(v)-F_{\hat\b_n}(v)\right\}\,dv\,\E(X|T-\hat\b_n'X =u)\\
&\qquad\qquad\qquad\qquad\qquad\qquad\qquad\qquad\qquad\qquad\qquad\cdot\frac{F_{\hat\b_n}(u) - \hat F_{n,\hat\b_n}(u)}{\hat F_{\hat\b_n}(u)\{1-\hat F_{n,\hat\b_n}(u)\}}\,f_{T-\hat\b_n'X}(u)\,du\\
&\qquad\qquad+\int_{\hat F_{n,\hat\b_n}(u)\in[\ee,1-\ee]}\left\{\int K_h(u-v)\,dF_{\hat\b_n}(v)-f_{\hat\b_n}(u)\right\}\E(X|T-\hat\b_n'X =u)\\
&\qquad\qquad\qquad\qquad\qquad\qquad\qquad\qquad\qquad\qquad\qquad\qquad\cdot\frac{F_{\hat\b_n}(u) - \hat F_{n,\hat\b_n}(u)}{\hat F_{\hat\b_n}(u)\{1-\hat F_{n,\hat\b_n}(u)\}}\,f_{T-\hat\b_n'X}(u)\,du.
\end{align*}
The last term on the right-hand side has an upper bound of order $O_p(n^{-2/7-1/3})=O_p(n^{-13/21})$ $=o_p(n^{-1/2})$, since
$$
\left\{\int_{\hat F_{n,\hat\b_n}(u)\in[\ee,1-\ee]}\left\{\int K_h(u-v)\,dF_{\hat\b_n}(v)-f_{\hat\b_n}(u)\right\}^2\,du\right\}^{1/2}=O_p\left(n^{-2/7}\right),
$$
and
\begin{align}
\label{CS_Lemma3.1}
\left\{\int_{\hat F_{n,\hat\b_n}(u)\in[\ee,1-\ee]}\left\{\hat F_{n,\hat\b_n}(u)-F_{\hat\b_n}(u)\right\}^2\,du\right\}^{1/2}=O_p\left(n^{-1/3}\right),
\end{align}
using Lemma \ref{lemma:MLE_misspecified} for the last relation. We also use the Cauchy-Schwarz inequality.

The first term on the right is of order $O_p(n^{1/7-2/3})=O_p(n^{-11/21})
=o_p(n^{-1/2})$ by (\ref{CS_Lemma3.1}) and using
\begin{align*}
&\left|h^{-2}\int_{\hat F_{n,\hat\b_n}(u)\in[\ee,1-\ee]}\int K'((u-v)/h)\left\{\hat F_{n,\hat\b_n}(v)-F_{\hat\b_n}(v)\right\}\,dv\,\E(X|T-\hat\b_n'X =u)\right.\\
&\left.\qquad\qquad\qquad\qquad\qquad\qquad\qquad\qquad\qquad\qquad\qquad\cdot\frac{F_{\hat\b_n}(u) - \hat F_{n,\hat\b_n}(u)}{\hat F_{\hat\b_n}(u)\{1-\hat F_{n,\hat\b_n}(u)\}}\,f_{T-\hat\b_n'X}(u)\,du\right|\\
&=h^{-1}\left|\int_{\hat F_{n,\hat\b_n}(u)\in[\ee,1-\ee]}\int K'(w)\left\{\hat F_{n,\hat\b_n}(u-hw)-F_{\hat\b_n}(u-hw)\right\}\,dw\,\E(X|T-\hat\b_n'X =u)\right.\\
&\left.\qquad\qquad\qquad\qquad\qquad\qquad\qquad\qquad\qquad\qquad\qquad\cdot\frac{F_{\hat\b_n}(u) - \hat F_{n,\hat\b_n}(u)}{\hat F_{\hat\b_n}(u)\{1-\hat F_{n,\hat\b_n}(u)\}}\,f_{T-\hat\b_n'X}(u)\,du\right|\\
&\le ch^{-1}\int_{\hat F_{n,\hat\b_n}(u)\in[\ee/2,1-\ee/2]}\left\{\hat F_{n,\hat\b_n}(u)-F_{\hat\b_n}(u)\right\}^2\,du,
\end{align*}
for small $h$ and a constant $c>0$, where we first use Fubini's theorem and next the Cauchy-Schwarz inequality in the last inequality, together with
\begin{align*}
&\int_{\hat F_{n,\hat\b_n}(u)\in[\ee,1-\ee]} \left\{\hat F_{n,\hat\b_n}(u-hw)-F_{\hat\b_n}(u-hw)\right\}^2\,du\\
&\le \int_{\hat F_{n,\hat\b_n}(u)\in[\ee/2,1-\ee/2]} \left\{\hat F_{n,\hat\b_n}(u)-F_{\hat\b_n}(u)\right\}^2\,du,
\end{align*}
for small $h>0$, together with $w\in[-1,1]$.
Finally we use Lemma  \ref{lemma:MLE_misspecified} again.

	For the term $I_c$ we argue similarly as before using Lemma \ref{lemma:equi} that,
	\begin{align*}
		I_c=o_p\left(n^{-1/2}\right).
	\end{align*}
	Finally,
	\begin{align*}
	I_a&=\int_{\hat F_{n,\hat\b_n}(t-\hat\b_n'x)\in[\ee,1-\ee]}\Bigl\{xf_{nh,\hat\b_n}(t-\hat\b_n'x)-\varphi_{\hat\b_n}(t-\hat\b_n'x)\Bigr\}\\
	&\qquad\qquad\qquad\qquad\qquad\cdot\frac{\d - \hat F_{n,\hat\b_n}(t-\hat\b_n'x)}{\hat F_{n,\hat\b_n}(t-\hat\b_n'x)\{1-\hat F_{n,\hat\b_n}(t-\hat\b_n'x)\}}\,d\bigl(\P_n-P_0\bigr)(t,x,\d)\\
	&\qquad+\int_{\hat F_{n,\hat\b_n}(t-\hat\b_n'x)\in[\ee,1-\ee]}\Bigl\{xf_{nh,\hat\b_n}(t-\hat\b_n'x)-\varphi_{\hat\b_n}(t-\hat\b_n'x)\Bigr\}\\
	&\qquad\qquad\qquad\qquad\qquad\qquad\cdot\frac{F_0(t-\b_0'x)-\hat F_{n,\hat\b_n}(t-\hat\b_n'x)}{\hat F_{n,\hat\b_n}(t-\hat\b_n'x)\{1-\hat F_{n,\hat\b_n}(t-\hat\b_n'x)\}}\,dP_0(t,x,\d).
	\end{align*}
	This time we consider the class of functions
	\begin{align*}
	{\cal K}_2'&=\left\{(t,x,\d)\mapsto\bigl(xf(t-\b'x)-\varphi_{\b}(t-\b'x)\bigr)\frac{\d -F(t-\b'x)}{F(t-\b'x)\{1-F(t-\b'x)\}}1_{[\ee,1-\ee]}(F(t-\b'x))\right.\\
	&\left.\qquad\qquad\qquad\qquad\qquad\qquad\qquad\qquad\qquad\qquad\qquad\qquad\qquad\qquad : F\in{\cal F},\,f\in{\cal F}',\,\b\in\Theta\right\},
	\end{align*}
	where ${\cal F}'$ is a class of uniformly bounded functions of uniformly bounded variation (which have the interpretation of estimates of $F'_{\b}$), to which we add the function
	\begin{align*}
	(t,x,\d)\mapsto \bigl(xf_0(t-\b_0'x)-\varphi_{\b_0}(t-\b_0'x)\bigr)\frac{\d -F_0(t-\b_0'x)}{F_0(t-\b_0'x)\{1-F_0(t-\b_0'x)\}}1_{[\ee,1-\ee]}(F_0(t-\b_0'x)).
	\end{align*}
	So we get,  using Lemma \ref{lemma:bracketing-entropy},
		\begin{align*}
		\sup_{\zeta>0} \zeta H_B\left(\zeta,{\K_2'},L_2(P_0)\right)=O(1),
		\end{align*}
		which implies:
		\begin{align*}
		\int_0^{\zeta} H_B\left(u,{\K_2'},L_2(P_0)\right)^{1/2}\,du=O\left(\zeta^{1/2}\right),\qquad\zeta>0.
		\end{align*}
	As before, we now get:
	\begin{align*}
	&\int_{\hat F_{n,\hat\b_n}(t-\hat\b_n'x)\in[\ee,1-\ee]}\Bigl\{xf_{nh,\hat\b_n}(t-\hat\b_n'x)-\varphi_{\hat\b_n}(t-\hat\b_n'x)\Bigr\}\\
	&\qquad\qquad\qquad\qquad\qquad\qquad\cdot\frac{\d -\hat F_{n,\hat\b_n}(t-\hat\b_n'x)}{\hat F_{n,\hat\b_n}(t-\hat\b_n'x)\{1-\hat F_{n,\hat\b_n}(t-\hat\b_n'x)\}}\,d\bigl(\P_n-P_0\bigr)(t,x,\d)\\
	&= \int_{F_0(t-\b_0'x)\in[\ee,1-\ee]}\Bigl\{xf_0(t-\b_0'x)-\varphi_{\b_0}(t-\b_0'x)\Bigr\}\\
	&\qquad\qquad\qquad\qquad\qquad\qquad\cdot\frac{\d - F_0(t-\b_0'x)}{F_0(t-\b_0'x)\{1-F_0(t-\b_0'x)\}}\,d\bigl(\P_n-P_0\bigr)(t,x,\d)\\
	&\qquad\qquad+o_p\left(n^{-1/2}+\hat\b_n-\b_0\right)
	\end{align*}
	and
	\begin{align*}
	&\int_{\hat F_{n,\hat\b_n}(t-\hat\b_n'x)\in[\ee,1-\ee]}\Bigl\{xf_{nh,\hat\b_n}(t-\hat\b_n'x)-\varphi_{\hat\b_n}(t-\hat\b_n'x)\Bigr\}\\
	&\qquad\qquad\qquad\qquad\qquad\qquad\qquad\qquad\cdot\frac{F_0(t-\b_0'x)-\hat F_{n,\hat\b_n}(t-\hat\b_n'x)}{\hat F_{n,\hat\b_n}(t-\hat\b_n'x)\{1-\hat F_{n,\hat\b_n}(t-\hat\b_n'x)\}}\,dP_0(t,x,\d)\\
	&=\Biggl\{ \int_{F_0(t-\b_0'x)\in[\ee,1-\ee]}\Bigl\{xf_0(t-\b_0'x)-\varphi_{\b_0}(t-\b_0'x)\Bigr\}\\
	&\qquad\qquad\qquad\qquad\qquad\qquad\cdot\frac{f_0(t-\b_0'x)x'}{F_0(t-\b_0'x)\{1-F_0(t-\b_0'x)\}}\,dP_0(t,x,\d)\Biggr\}\left(\hat\b_n-\b_0\right)\\
	&\qquad\qquad+o_p\left(n^{-1/2}+\hat\b_n-\b_0\right).
	\end{align*}
	The result now follows.
\end{proof}

\section{Asymptotic behavior of the plug-in estimator}
 In this section we first sketch in Section \ref{section:normality} the proof of consistency of the plug-in estimator, denoted by $\hat \b_n$.  This is the second result stated in Theorem \ref{th:asymptotic_normality}. The proof of existence of a root is similar to the proof of existence of a root of the simple score estimator defined in Section \ref{subsection:MLE} and omitted. We next prove the asymptotic normality result of the plug-in estimator, which is the third result given in Theorem \ref{th:asymptotic_normality}. The proof of Theorem \ref{th:alternative_MLE-expansion} on the asymptotic representation of the plug-in estimator as a sum of i.i.d. random variables follows from the proof of \ref{th:asymptotic_normality}. The asymptotic distribution of the estimator of the intercept, given in Theorem \ref{th:intercept}, is proved in Section \ref{sectionA:intercept}.

Before we start the proofs, we give some auxiliary results on the $L_2$-distance between the plug-in estimate $F_{nh,\b}$ and $F_{\b}$ and between the partial derivative of the plug-in estimate $\partial_\b F_{nh,\b}$ and $\partial_\b F_{\b}$ in Lemma \ref{lemma:distance_F{nh}-F_0}. For simplicity, we derive the proof of Lemma \ref{lemma:distance_F{nh}-F_0} for the one-dimensional case and let $\Theta = [\b_0-\eta,\b_0+\eta]$ for some $\eta >0$. The higher-dimensional extension of the one-dimensional proof is straightforward. Next, we follow the arguments used to prove the asymptotic normality of the estimators defined in Theorem \ref{theorem:method1} and Theorem \ref{theorem:method2} and give a similar proof for the limiting distribution of the plug-in estimator.  

\begin{lemma}
\label{lemma:distance_F{nh}-F_0}
Let the conditions of Theorem \ref{th:asymptotic_normality} be satisfied and let $k=1$. Let the function $F_{\b}$ be defined by (\ref{def_F_beta}). Then we have, for the estimate $F_{nh,\b}$, defined by (\ref{plug_in_estimate}),
	\begin{align}
	\label{bound_F_{nh,beta}1}
	&\int_{F_{nh,\b}(t-\b x)\in[\ee,1-\ee]}\left\{F_{nh,\b}(t-\b x)-F_{\b}(t-\b x)\right\}^2\,dG(t,x)=O_p\left(\frac1{nh}\right)+O_p\left(h^4\right),
	\\ \nonumber \\
	&\int_{F_{nh,\b}(t-\b x)\in[\ee,1-\ee]}\left\{\partial_\b F_{nh,\b}(t-\b x)-\partial_\b F_\b(t-\b x)\right\}^2\,dG(t,x)= O_p\left(\frac1{nh^3}\right)+O_p\left(h^2\right)	\label{derivative_F_{nh,beta}_bound1}
	\end{align}
	uniformly in $\b\in[\b_0-\eta,\b_0+\eta]$.
	The results remain valid when $dG$ in (\ref{bound_F_{nh,beta}1}) or (\ref{derivative_F_{nh,beta}_bound1}) is replaced by $d\G_n$.
\end{lemma}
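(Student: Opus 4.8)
The plan is to run the classical bias/variance decomposition for Nadaraya--Watson type estimates, keeping every bound uniform in $\b\in[\b_0-\eta,\b_0+\eta]$ and handling two nuisances: the ratio structure $F_{nh,\b}=g_{nh,1,\b}/g_{nh,\b}$ (see (\ref{g1}), (\ref{g})) and the \emph{random} truncation region $\{F_{nh,\b}(t-\b x)\in[\ee,1-\ee]\}$; write $u=t-\b x$. First I would record the uniform consistency $\sup_{\b}\sup_{u\in A_{\ee',\b}}\bigl(|g_{nh,\b}(u)-f_{T-\b X}(u)|+|F_{nh,\b}(u)-F_\b(u)|\bigr)\stackrel{p}{\to}0$, which is standard for kernel ratio estimates, the uniformity in $\b$ coming from the bounded variation of $K$ and the bracketing bound of Lemma \ref{lemma:bracketing-entropy}. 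By (A2)--(A3) the density $f_{T-\b X}$ is bounded below on $A_{\ee',\b}$, so on an event of probability tending to one $g_{nh,\b}\ge c>0$ on $A_{\ee',\b}$ and $\{F_{nh,\b}(u)\in[\ee,1-\ee]\}\subseteq A_{\ee',\b}$ for every $\b$; on that event the integrals in (\ref{bound_F_{nh,beta}1}) and (\ref{derivative_F_{nh,beta}_bound1}) are dominated by the corresponding integrals over the deterministic sets $A_{\ee',\b}$ with the denominators absorbed as bounded factors, so it suffices to bound the numerators on $A_{\ee',\b}$.

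\textbf{Step 2 (proof of (\ref{bound_F_{nh,beta}1})).} On $A_{\ee',\b}$ I would write
\begin{align*}
F_{nh,\b}(u)-F_\b(u)=\frac{\bigl\{g_{nh,1,\b}(u)-\E g_{nh,1,\b}(u)\bigr\}-F_\b(u)\bigl\{g_{nh,\b}(u)-\E g_{nh,\b}(u)\bigr\}}{g_{nh,\b}(u)}+\frac{\int K_h(u-v)\{F_\b(v)-F_\b(u)\}f_{T-\b X}(v)\,dv}{g_{nh,\b}(u)},
\end{align*}
using $\E\{\d\mid T-\b X=v\}=F_\b(v)$ (Section \ref{section:MLE}). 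The deterministic term is $O(h^2)$ by a second order Taylor expansion, the symmetry of $K$ in (K1), and the twice differentiability of $v\mapsto F_\b(v)f_{T-\b X}(v)$ from (A4)--(A5); squaring and integrating gives the $O(h^4)$ contribution, uniformly in $\b$. For the stochastic part a direct V-statistic computation gives $\E\int_{A_{\ee',\b}}\{g_{nh,1,\b}-\E g_{nh,1,\b}\}^2\,du=O(1/(nh))$ with constant free of $\b$ (and likewise for $g_{nh,\b}$), since $\text{Var}(\d\,K_h(u-(T-\b X)))\le\E[K_h(u-(T-\b X))^2]=O(1/h)$; passing from ``for each $\b$'' to ``uniformly in $\b$'' is handled by a chaining argument over the class $\{(t,x,\d)\mapsto\d\,K_h(u-t+\b x):\b,u\}$, whose bracketing entropy is polynomial because $K$ has bounded variation (as in Lemma \ref{lemma:bracketing-entropy}).

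\textbf{Step 3 (proof of (\ref{derivative_F_{nh,beta}_bound1})).} Starting from (\ref{partial_derivative}) I would replace $F_{nh,\b}(u)$ inside the numerator by $F_\b(u)$ --- by Cauchy--Schwarz, (\ref{bound_F_{nh,beta}1}) and the scaling $K_h'=h^{-2}K'(\cdot/h)$ the resulting error is of smaller order than claimed --- and then centre the remaining numerator as in Step 2. Its bias, after one integration by parts (legitimate since $K$ has compact support), equals $\int K_h(u-v)\,m_{\b}'(v)\,dv$ for a function $m_\b$ built from $f_0$, $f_\b$ and the conditional moments of $X$; under (A4)--(A5) this is $m_{\b}'(u)+O(h)$, which together with the $O(h^2)$ error from $g_{nh,\b}\neq f_{T-\b X}$ produces the $O(h^2)$ term. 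The centred term has pointwise variance $O(n^{-1}h^{-3})$, the extra $h^{-2}$ coming from $K_h'$, and integrating gives $O_p(1/(nh^3))$, once more made uniform in $\b$ by the entropy argument; here it is convenient to quote the $L_2$-bounds for derivatives of kernel density estimates from \cite{nickl:15} rather than redo the derivative bookkeeping by hand.

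\textbf{Step 4 ($d\G_n$ in place of $dG$, and the main obstacle).} Each quantity above has the form $\int H_{n,\b}(t,x)^2\,dQ(t,x)$ with $Q\in\{G,\G_n\}$; writing $\int H_{n,\b}^2\,d\G_n=\int H_{n,\b}^2\,dG+\int H_{n,\b}^2\,d(\G_n-G)$ and bounding the last term via the equicontinuity Lemma \ref{lemma:equi} (or by expanding the V-statistic over the additional average) shows the $\G_n$-version obeys the same bounds. The part I expect to be the real work is the \emph{uniformity in $\b$} of the stochastic terms: pointwise the variance computations are routine, but controlling the supremum over the compact interval $\Theta$ requires an entropy/chaining bound for kernel-type classes that depend on $\b$ both through the argument $t-\b x$ and (in the plug-in) through the shift $\b X_j$, and in (\ref{derivative_F_{nh,beta}_bound1}) the $K_h'$ factor carries an $h^{-2}$ that interacts badly with the modulus of continuity in $\b$ --- this is exactly where the bounded variation of $K$ and $K'$ and the covering device of Lemma \ref{lemma:bracketing-entropy} are indispensable.
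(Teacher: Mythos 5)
Your proposal is correct and follows essentially the same route as the paper's proof: the Nadaraya--Watson ratio is split into an $O(h^2)$ bias (Taylor expansion with the symmetric kernel) plus centered kernel averages with variance $O(1/(nh))$, the derivative bound is obtained by swapping $F_{nh,\b}$ for $F_\b$ in the numerator of (\ref{partial_derivative}) via Cauchy--Schwarz and part one, centering the rest to get the $O(1/(nh^3))$ variance and $O(h)$ bias, and the $d\G_n$ version is reduced to the $dG$ version by an empirical-process/expectation argument. The only differences are bookkeeping: you center $g_{nh,1,\b}-F_\b g_{nh,\b}$ directly where the paper centers the two kernel averages separately, and the paper handles the $d\G_n$ case by exchangeability plus Markov rather than a $d(\G_n-G)$ equicontinuity bound.
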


\begin{proof}[proof of Lemma \ref{lemma:distance_F{nh}-F_0}]
We first prove the first part and show that (\ref{bound_F_{nh,beta}1}) holds.
Recall that,
\begin{align*}
	F_{nh,\b}(t-\b x)=\frac{g_{nh,1,\b}(t-\b x)}{g_{nh,\b}(t-\b x)}
\end{align*}
where
\begin{align*}
g_{nh,1,\b}(t-\b x)=\int \d K_h(t-\b x-u+\b y)\,d\P_n(u,y,\d),
\end{align*}
and
\begin{align*}
g_{nh,\b}(t-\b x)=\int K_h(t-\b x-u+\b y)\,d\P_n(u,y,\d).
\end{align*}
Moreover,
\begin{align*}
F_{\b}(t-\b x)=\int F_0(t-\b_0 x+(\b-\b_0)(y-x))f_{X|T-\b X}(y|t-\b x)\,dy.
\end{align*}
We first investigate the bias part.
\begin{align*}
&\E g_{nh,1,\b}(t-\b x) = \int F_0(u-\b_0 y)K_h(t-\b x-u+\b y)\,dG(u,y)\\
&=\int F_0(v+(\b-\b_0)y)K_h(t-\b x-v)\,f_{T-\b X}(v)\,f_{X|T-\b X}(y|v)\,dy\,dv\\
&=\int F_0(t-\b x+(\b-\b_0)y-hw)K(w)\,f_{T-\b X}(t-\b x-hw)\,f_{X|T-\b X}(y|t-\b x-hw)\,dy\,dw\\
&=f_{T-\b X}(t-\b x)\int F_0(t-\b_0 x+(\b-\b_0)(y-x))f_{X|T-\b X}(y|t-\b x)\,dy +O\left(h^2\right),
\end{align*}
uniformly in $\b\in[\b_0-\eta,\b_0+\eta]$ and $t,x$ varying over a finite interval, due to the assumptions of Theorem 
\ref{th:asymptotic_normality}. In a similar way, we get
\begin{align*}
&\E g_{nh,\b}(t-\b x)=f_{T-\b X}(t-\b x)+O\left(h^2\right),
\end{align*}
uniformly in $\b\in[\b_0-\eta,\b_0+\eta]$ and $t,x$ varying over a finite interval.
So we find:
\begin{align*}
\frac{\E g_{nh,1,\b}(t-\b x)}{\E g_{nh,\b}(t-\b x)}=F_{\b}(t-\b x)+O\left(h^2\right).
\end{align*}
uniformly in $\b\in[\b_0-\eta,\b_0+\eta]$ and $t,x$ varying over a finite interval, such that $\E g_{nh,1,\b}(t-\b x)$ stays away from zero.
	
So we obtain
\begin{align*}
&F_{nh,\b}(t-\b x)-F_{\b}(t-\b x)\nonumber\\
&=\frac{g_{nh,1,\b}(t-\b x)-\E g_{nh,1,\b}(t-\b x)}{g_{nh,\b}(t-\b x)}
+\E g_{nh,1,\b}(t-\b x)\frac{\E g_{nh,\b}(t-\b x)-g_{nh,\b}(t-\b x)}{g_{nh,\b}(t-\b x)\E g_{nh,\b}(t-\b x)}+O\left(h^2\right),
\end{align*}
and
\begin{align}
\label{first_upbound_F_{nh}}
&\left\{F_{nh,\b}(t-\b x)-F_{\b}(t-\b x)\right\}^2\nonumber\\
&\le 3\left\{\frac{g_{nh,1,\b}(t-\b x)-\E g_{nh,1,\b}(t-\b x)}{g_{nh,\b}(t-\b x)}\right\}^2
+3\left\{\E g_{nh,1,\b}(t-\b x)\frac{\E g_{nh,\b}(t-\b x)-g_{nh,\b}(t-\b x)}{g_{nh,\b}(t-\b x)\E g_{nh,\b}(t-\b x)}\right\}^2\nonumber\\
&\qquad\qquad\qquad\qquad\qquad\qquad\qquad\qquad\qquad\qquad\qquad\qquad\qquad\qquad\qquad\qquad\qquad\qquad\qquad+O\left(h^4\right).
\end{align}
uniformly in $\b\in[\b_0-\eta,\b_0+\eta]$ and $t,x$ varying over a finite interval, such that $\E g_{nh,1,\b}(t-\b x)$ stays away from zero.
		
Since $\eta>0$ is chosen in such a way that $a_1(\b)=F_{\b}^{-1}(\ee)>a$, $b_1(\b)=F_{\b}^{-1}(1-\ee)<b$, for each $\b\in[\b_0-\eta,\b_0+\eta]$ and	
since $g_{nh,\b}$ stays away from zero with probability tending to one if $\ee<F_{nh,\b}(t-\b x)<1-\ee$ we get

\begin{align*}
&\int_{F_{nh,\b}(t-\b x)\in[\ee,1-\ee]}\left\{\frac{g_{nh,1,\b}(t-\b x)-\E g_{nh,1,\b}(t-\b x)}{g_{nh,\b}(t-\b x)}\right\}^2\,dG(t,x)\\
&\lesssim\int_{F_{nh,\b}(t-\b x)\in[\ee,1-\ee]}\left\{g_{nh,1,\b}(t-\b x)-\E g_{nh,1,\b}(t-\b x)\right\}^2\,dG(t,x)\\
\end{align*}
Furthermore
\begin{align*}
\E\left\{g_{nh,1,\b}(t-\b x)-\E g_{nh,1,\b}(t-\b x)\right\}^2
&=\E\left\{\int \d K_h(t-\b x-u+\b y)\,d(\P_n-P_0)(u,y,\d)\right\}^2 \\
&=O\left(\frac1{nh}\right),
\end{align*}
uniformly for $(t,x)$ in a bounded region, so we get
\begin{align*}
\E\int_{F_{nh,\b}(t-\b x)\in[\ee,1-\ee]}\left\{g_{nh,1,\b}(t-\b x)-\E g_{nh,1,\b}(t-\b x)\right\}^2\,dG(t,x)=O\left(\frac1{nh}\right).
\end{align*}
Hence
\begin{align*}
\int_{F_{nh,\b}(t-\b x)\in[\ee,1-\ee]}\left\{\frac{g_{nh,1,\b}(t-\b x)-\E g_{nh,1,\b}(t-\b x)}{g_{nh,\b}(t-\b x)}\right\}^2\,dG(t,x)
=O_p\left(\frac1{nh}\right).
\end{align*}
The second term on the right-hand side of (\ref{first_upbound_F_{nh}}) can be treated in a similar way. So we get (\ref{bound_F_{nh,beta}1}).
This proves (\ref{bound_F_{nh,beta}1}).  

We next replace $dG$ in part (\ref{bound_F_{nh,beta}1}) by $d\G_n$ and we get
\begin{align*}
&\int_{F_{nh,\b}(t-\b x)\in[\ee,1-\ee]}\left\{\frac{g_{nh,1,\b}(t-\b x)-\E g_{nh,1,\b}(t-\b x)}{g_{nh,\b}(t-\b x)}\right\}^2\,d\G_n(t,x)\\
&\lesssim\int_{F_{nh,\b}(t-\b x)\in[\ee,1-\ee]}\left\{g_{nh,1,\b}(t-\b x)-\E g_{nh,1,\b}(t-\b x)\right\}^2\,d\G_n(t,x)\\
&=\frac1n\sum_{i=1}^n\left\{g_{nh,1,\b}(T_i-\b X_i)-\E g_{nh,1,\b}(T_i-\b X_i)\right\}^21_{\{\ee<F_{nh,\b}(T_i-\b X_i)<1-\ee\}}.
\end{align*}
Moreover,
\begin{align*}
&\E\frac1n\sum_{i=1}^n\left\{g_{nh,1,\b}(T_i-\b X_i)-\E g_{nh,1,\b}(T_i-\b X_i)\right\}^21_{\{\ee<F_{nh,\b}(T_i-\b X_i)<1-\ee\}}\\
&=\E\left\{g_{nh,1,\b}(T_1-\b X_1)-\E g_{nh,1,\b}(T_1-\b X_1)\right\}^21_{\{\ee<F_{nh,\b}(T_1-\b X_1)<1-\ee\}}\\
&\lesssim\E\int_{\ee/2<F_{\b}(t-\b x)<1-\ee/2}\left\{g_{nh,1,\b}(t-\b x)-\E g_{nh,1,\b}(t-\b x)\right\}^2\,dG(t,x)\\
&=O\left(\frac1{nh}\right).
\end{align*}
This implies by the Markov inequality,
\begin{align*}
&\int_{F_{nh,\b}(t-\b x)\in[\ee,1-\ee]}\left\{\frac{g_{nh,1,\b}(t-\b x)-\E g_{nh,1,\b}(t-\b x)}{g_{nh,\b}(t-\b x)}\right\}^2\,d\G_n(t,x)
=O_p\left(\frac1{nh}\right).
\end{align*}
The other term on the right-hand side of (\ref{first_upbound_F_{nh}}) is treated similarly; and the result of (\ref{bound_F_{nh,beta}1}) also follows when we replace $dG$ by $d\G_n$.
\\
\\
We next continue with the proof of (\ref{derivative_F_{nh,beta}_bound1}).
		
We have:
\begin{align}
\label{partial_beta}
\partial_\b F_{nh,\b}(t-\b x)
=\frac{\int(y-x)\{\d-F_{nh,\b}(t-\b x)\}K_h'(t-\b x-u+\b y)\,d\P_n(u,y,\d)}{g_{nh,\b}(t-\b x)}\,.
\end{align}    
We consider the numerator of (\ref{partial_beta}). It can be rewritten as
\begin{align*}
&\int(y-x)\{\d-F_0(u-\b_0y)\}K_h'(t-\b x-u+\b y)\,d\P_n(u,y,\d)\\
& \qquad +\int(y-x)\{F_0(u-\b_0y)-F_{\b}(t-\b x)\}K_h'(t-\b x-u+\b y)\,d\G_n(u,y)\\
& \qquad +\{F_{\b}(t-\b x)-F_{nh,\b}(t-\b x)\}\int(y-x)K_h'(t-\b x-u+\b y)\,d\G_n(u,y).  
\end{align*}
The first term can be written as
\begin{align*}
A_n(t,x,\b)\stackrel{\text{\small def}}=\int(y-x)\{\d-F_0(u-\b_0y)\}K_h'(t-\b x-u+\b y)\,d\bigl(\P_n-P_0\bigr)(u,y,\d),
\end{align*}
and we have:
\begin{align*}
&\E\int_{F_{nh,\b}(t-\b x)\in[\ee,1-\ee]}A_n(t,x,\b)^2\,dG(t,x)\le \E\int A_n(t,x,\b)^2\,dG(t,x)\\
&\sim\frac1{nh^3}\int\text{var}(X|v)F_0(v)\{1-F_0(v)\}f_{T-\b X}(v)\,dv\int K'(u)^2\,du,\,n\to\infty.
\end{align*}
In the second term we must compare $F_0(u-\b_0y)$ with
$$
F_{\b}(t-\b x)=\int F_0(t-\b_0 x+(\b-\b_0)(z-x))f_{X|T-\b X}(z|t-\b x)\,dz.
$$
We can write
\begin{align*}
&F_0(u-\b_0y)-F_{\b}(t-\b x)\\
&=\int \left\{F_0(u-\b_0y)-F_0(t-\b_0 x+(\b-\b_0)(z-x))\right\}f_{X|T-\b X}(z|t-\b x)\,dz.
\end{align*}
So we find for the second term
\begin{align*}
&B_n(t,x,\b)\stackrel{\text{\small def}}=\int(y-x)\left\{F_0(u-\b_0y)-F_{\b}(t-\b x)\right\}K_h'(t-\b x-u+\b y)\,d\G_n(u,y)\\
&=\int \int(y-x)\left\{F_0(u-\b_0y)-F_0(t-\b_0 x+(\b-\b_0)(z-x))\right\}f_{X|T-\b X}(z|t-\b x)\,dz\\
&\qquad\qquad\qquad\qquad\qquad\qquad\qquad\qquad\qquad\qquad\qquad\qquad\cdot K_h'(t-\b x-u+\b y)\,d\G_n(u,y)\\
&=\int (y-x)\int\left\{F_0(u-\b_0y)-F_0(t-\b_0 x+(\b-\b_0)(z-x))\right\}f_{X|T-\b X}(z|t-\b x)\,dz\\
&\qquad\qquad\qquad\qquad\qquad\qquad\qquad\qquad\qquad\qquad\qquad\qquad\cdot K_h'(t-\b x-u+\b y)\,dG(u,y)\\
&\qquad+\int (y-x)\int\left\{F_0(u-\b_0y)-F_0(t-\b_0 x+(\b-\b_0)(z-x))\right\}f_{X|T-\b X}(z|t-\b x)\,dz\\
&\qquad\qquad\qquad\qquad\qquad\qquad\qquad\qquad\qquad\qquad\qquad\qquad\cdot K_h'(t-\b x-u+\b y)\,d\bigl(\G_n-G\bigr)(u,y)\\
&=f_{T-\b X}(t-\b x)\partial_\b F_{\b}(t-\b x) + O(h) + O_p\left(\frac1{nh^3}\right).
\end{align*}
where, using integration by parts, the last line follows by straightforward calculation.
Since
\begin{align*}
g_{nh,\b}(t-\b x) = f_{T-\b X}(t - \b x) + O_p(h^2),
\end{align*}
we get, 
\begin{align*}
&\int_{F_{nh,\b}(t-\b x)\in[\ee,1-\ee]}\left\{ \frac{B_n(t,x,\b)}{g_{nh,\b}(t-\b x)}-\partial_\b F_\b(t-\b x)\right\}^2\,dG(t,x)=O_p\left(\frac1{nh^3}\right)+O_p\left(h^2\right).
\end{align*}
Finally, defining
\begin{align*}
C_n(t,x,\b)\stackrel{\text{\small def}}=\{F_{\b}(t-\b x)-F_{nh,\b}(t-\b x)\}\int(y-x)K_h'(t-\b x-u+\b y)\,d\G_n(u,y),
\end{align*}
we get, using,
\begin{align*}
&\int(y-x)K_h'(t-\b x-u+\b y)\,d\G_n(u,y)\\
&=\int(y-x)K_h'(t-\b x-u+\b y)\,dG(u,y)+\int(y-x)K_h'(t-\b x-u+\b y)\,d(\G_n-G)(u,y)\\
&=\int (y-x)K_h'(t-\b x-v)f_{T-\b X}(v)f_{X|T-\b X}(y|v)\,dv\,dy+O_p\left(\frac1{nh^3}\right)\\
&=\int (y-x)K_h(t-\b x-v)\frac{d}{dv}\left\{f_{T-\b X}(v)f_{X|T-\b X}(y|v)\right\}\,dv\,dy+O_p\left(\frac1{nh^3}\right)\\
&=O_p(1),
\end{align*}
and using the first part of Lemma \ref{lemma:distance_F{nh}-F_0} for the factor $F_{\b}(t-\b x)-F_{nh,\b}(t-\b x)$ that
	\begin{align*} 
	\int_{F_{nh,\b}(t-\b x)\in[\ee,1-\ee]}C_n(t,x,\b)^2\,dG(t,x)=O_p\left(\frac1{nh}\right)+O_p\left(h^4\right).
	\end{align*}
	This proves  (\ref{derivative_F_{nh,beta}_bound1}). The second part of the result, replacing $dG$ by $d\G_n$ in (\ref{derivative_F_{nh,beta}_bound1}) is proved in the same way as the second part of (\ref{bound_F_{nh,beta}1}).
\end{proof}

\subsection{Consistency and asymptotic normality of the plug-in estimator}
\label{section:normality}
We first prove that $\hat \b_n$ is a consistent estimate of $\b_0$.
\begin{proof}[Proof of Theorem \ref{th:asymptotic_normality}, Part 1 (Consistency of the plug-in estimator)]
	We assume that $\hat\b_n$ is contained in the compact set $\Theta$, and hence the sequence $(\hat\b_n)$ has a subsequence $(\hat\b_{n_k}=\hat\b_{n_k}(\omega))$, converging to an element $\b_*$. It is easily seen that, if $\hat\b_{n_k}=\hat\b_{n_k}(\omega)\longrightarrow \b_*$, we get:
	\begin{align*}
	F_{n_kh,\hat\b_{n_k}}(t-\hat\b_{n_k}'x)\longrightarrow  F_{\b_*}(t-\b_*'x)\stackrel{\text{\small def}}=\int F_0(t-\b_*'x+(\b_*-\b_0)'y)f_{X|T-\b_*'X}(y|t-\b_*'x)\,dy.
	\end{align*}
	
	In the limit  we get therefore the relation
	\begin{align}
	&\lim_{k\to\infty}-(\hat \b_{n_k}-\b_0)'\\
	&\qquad\int_{F_{n_k h,\hat\b_{n_k}}(t-\hat\b_{n_k}'x)\in[\ee,1-\ee]}
	\frac{\bigl\{\d-F_{n_k h,\hat\b_{n_k}}(t-\hat\b_{n_k}'x)\bigr\}\partial_{\b}F_{{n_k} h,\b}(t-\b'x)\bigr|_{\b=\hat\b_{n_k}}}{F_{n_k,\hat\b_{n_k}}(t-\hat\b_{n_k}'x)\{1-F_{{n_k},\hat\b_{n_k}}(t-\hat\b_{n_k}'x)\}}\,\,d\P_{n_k}(t,x,\d)
	\nonumber\\
	&=-\left(\b_*-\b_0\right)'\int_{F_{\b_*}(t-\b_*'x)\in[\ee,1-\ee]}\frac{\bigl\{F_0(t-\b_0'x) -F_{\b_*}(t-\b_*'x)\bigr\}\partial_{\b}F_{\b}(t-\b'x)|_{\b=\b_*}}{F_{\b_*}(t-\b_*'x)\{1-F_{\b_*}(t-\b_*'x)\}}\,dG(t,x)=0,
	\label{consistency_ineq}
	\end{align}	
	which can only mean $\b_*=\b_0$ by condition (\ref{identifiability_plugin}).
	
\end{proof}
We next continue with the proof of the asymptotic normality of the plug-in estimator.
\begin{proof}[Proof of Theorem \ref{th:asymptotic_normality}, Part 2 (Asymptotic Normality of the plug-in estimator)]
To prove the asymptotic normality of the plug-in estimator, we follow the reasoning of the corresponding proofs of the simple score estimator and the efficient score estimator described in  Section \ref{subsection:MLE} and  Section \ref{subsection:MLE2}. We prove that,
\begin{align}
\label{decomposition3}
&\psi_{3,nh}^{(\ee)}(\hat \b_n)  \\
&= \int_{F_0(t-\b_0'x)\in[\ee,1-\ee]}\frac{\left\{E(X|T-\b_0'X = t- \b_0'x) -x\right\}f_0(t-\b_0'x)\left\{\d - F_0(t-\b_0'x)\right\}}{F_0(t-\b_0'x)\{1-F_0(t-\b_0'x)\}}\,d\P_n(t,x,\d) \nonumber\\
&\quad+ \psi_{3,\ee}'(\b_0)(\hat \b_n - \b_0) + o_p\left(n^{-1/2} + (\hat \b_n - \b_0)\right),\nonumber
\end{align}
where $\psi_{3,\ee}$ is defined by,
\begin{align}
\label{psi_3-pop}
&\psi_{3,\ee}(\b) = \int_{F_\b(t-\b'x)\in[\ee,1-\ee]}\frac{\partial_\b F_{\b}(t-\b x)\Bigl\{\d - F_\b(t-\b'x)\Bigr\}}{F_\b(t-\b'x)\{1-F_\b(t-\b'x)\}}\,dP_0(t,x,\d),
\end{align}
and
\begin{align*}
\psi_{3,\ee}'(\b_0) 
&=-\E_\ee\left\{  \frac{ f_0(T- \b_0'X)^2\,\left\{ X - \E(X| T- \b_0'X) \right\}\left\{ X - \E(X| T- \b_0'X) \right\}'}{F_0(T- \b_0'X)\{1-F_0(T- \b_0'X)\}} \right\} =- I_\ee(\b_0),
\end{align*}
which follows by straightforward calculations after noting that,
\begin{align*}
\partial_\b F_{\b}(t-\b' x) &= \int(y-x)f_0(t- \b_0' x + (\b-\b_0)'(y-x) )f_{X|T- \b' X}(y|T-\b' X=t-\b' x)dy\\
&\qquad+\int F_0(t- \b_0' x + (\b-\b_0)'(y-x) ) \partial_\b f_{X|T-\b' X}(y|T-\b'X=t-\b' x)\,dG(t,x)
\end{align*}
is, at $\b =\b_0$ equal to
$$f_0(t- \b_0' x)\E\left\{X-x|T- \b_0' X=t- \b_0 'x\right\} .$$

We have,
\begin{align*}
&\psi_{3,nh}^{(\ee)}(\hat\b_n) 
\\&=\int_{F_{nh,\hat\b_n}(t-\hat\b_n'x)\in[\ee,1-\ee]}\partial_\b F_{nh,\b}(t-\b'x)\mid_{\b = \hat\b_n}\frac{\d - F_{nh,\hat\b_n}(t-\hat\b_n'x)}{F_{nh,\hat\b_n}(t-\hat\b_n'x)\{1-F_{nh,\hat\b_n}(t-\hat\b_n'x)\}}\,d\P_n(t,x,\d)\\
&=\int_{F_{nh,\hat\b_n}(t-\hat\b_n'x)\in[\ee,1-\ee]}\partial_\b F_{\b}(t-\b'x)\mid_{\b = \hat\b_n}\frac{\d -F_{nh,\hat\b_n}(t-\hat\b_n'x)}{F_{nh,\hat\b_n}(t-\hat\b_n'x)\{1-F_{nh,\hat\b_n}(t-\hat\b_n'x)\}}\,d\P_n(t,x,\d)
\\
&\qquad\qquad + \int_{F_{nh,\hat\b_n}(t-\hat\b_n'x)\in[\ee,1-\ee]}\left\{\partial_\b F_{nh,\b}(t-\b'x)\mid_{\b =\hat\b_n} - \partial_\b F_{\b}(t-\b'x)\mid_{\b = \hat\b_n}\right\}\\
&\qquad\qquad\qquad\qquad\qquad\qquad\qquad\cdot\frac{\d - F_{nh,\hat\b_n}(t-\hat\b_n'x)}{F_{nh,\hat\b_n}(t-\hat\b_n'x)\{1-F_{nh,\hat\b_n}(t-\hat\b_n'x)\}}\,d\P_n(t,x,\d)
\\
&\qquad = I + II
\end{align*}
 Let ${\cal F}$ be a class of functions with the property that
 $$
 \int_{\ee/2<F_{\b}(u)<1-\ee/2} f'(u)^2\,du\le M.
 $$
 if $f\in{\cal F}$, for a fixed $M>0$. Using Proposition 5.1.9, p.\  393 in \cite{nickl:15}, with $m=1$, $p=2$ and $h\asymp n^{-1/5}$, we may assume that the functions $u\to F_{nh,\b}(u)$ and $u\to  \partial_\b F_{nh,\b}(u) $ belong to ${\cal F}$ . Since the plug-in estimates are monotonically increasing with probability tending to one we get that the function
 \begin{align*}
 (t,x)\mapsto 1_{[\ee,1-\ee]}\left(F_{nh,\b}(t-\b'x)\right),
 \end{align*}
 can be written in the form
 \begin{align*}
 (t,x)\mapsto 1_{\left[a_{\ee,F_{nh,\b}},b_{\ee,F_{nh,\b}}\right]}(t-\b'x)
 =1_{\left[a_{\ee,F_{nh,\b}},\infty\right)}(t-\b'x)-1_{\left(b_{\ee,F_{nh,\b}},\infty\right)}(t-\b'x),
 \end{align*}
 for $a_{\ee,F_{nh,\b}}\le b_{\ee,F_{nh,\b}}$ for large $n$, with probability tending to one.
 The function is therefore of uniformly bounded variation for $n$ sufficiently large (see also the proofs of Theorems \ref{theorem:method1} and \ref{theorem:method2}). It now follows that the bracketing $\zeta-$entropy $H_B(\zeta,{\cal K}_3,L_2(P_0))$ for the class
 ${\cal K}_3$ of functions consisting of the function which is identically zero and the functions
 \begin{align}
 \label{cal_K3}
 {\cal K}_3&=\Biggl\{(t,x,\d)\mapsto\big\{\partial_\b F_{nh,\b}(t-\b'x) - \partial_\b F_{\b}(t-\b'x)\bigr\}\frac{\d - F(t-\b'x)}{F(t-\b'x)\{1-F(t-\b'x)\}}\nonumber\\
 &\qquad\qquad\qquad\qquad\qquad\qquad\qquad\qquad\qquad\qquad\qquad\cdot1_{[\ee,1-\ee]}(F_{nh,\b}(t-\b'x)): F\in{\cal F},\,\b\in\Theta\Biggr\},
 \end{align}
 w.r.t.\ the $L_2$-distance satisfies:
 	\begin{align*}
 	\sup_{\zeta>0} \zeta H_B\left(\zeta,{\K_3},L_2(P_0)\right)=O(1),
 	\end{align*}
 	which implies:
 	\begin{align*}
 	\int_0^{\zeta} H_B\left(u,{\K_3},L_2(P_0)\right)^{1/2}\,du=O\left(\zeta^{1/2}\right),\qquad\zeta>0.
 	\end{align*}
Moreover, by Lemma \ref{lemma:distance_F{nh}-F_0} we also have,
\begin{align*}
&\int_{F_{nh,\b}(t-\b'x)\in[\ee,1-\ee]}\Biggl\{ \left\{\partial_\b F_{nh,\b}(t-\b'x) - \partial_\b F_{\b}(t-\b'x)\right\}\\
&\qquad\qquad\qquad\qquad\qquad\qquad\cdot\frac{\d - F_{nh,\b}(t-\b'x)}{F_{nh,\b}(t-\b'x)\{1-F_{nh,\b}(t-\b'x)\}}\Biggr\}^2\,dP_0(t,x,\d)\stackrel{p}{\to}0.
\end{align*}
This implies by an application of Lemma \ref{lemma:equi}, that,
\begin{align*}
&\int_{F_{nh,\hat \b_n}(t-\hat \b_n'x)\in[\ee,1-\ee]}\Biggl\{ \left\{\partial_\b F_{nh,\b}(t-\b'x)\mid_{\b =\hat \b_n} - \partial_\b F_{\b}(t-\b'x)\mid_{\b=\hat \b_n}\right\}\\
&\qquad\qquad\qquad\qquad\cdot\frac{\d - F_{nh,\hat \b_n}(t-\hat \b_n'x)}{F_{nh,\hat \b_nb}(t-\hat \b_n'x)\{1-F_{nh,\hat \b_n}(t-\hat \b_n'x)\}}\Biggr\}\,d(\P_n-P_0)(t,x,\d) = o_p\left(n^{-1/2}\right)
\end{align*}
Furthermore, an application of the Cauchy-Schwarz inequality and Lemma \ref{lemma:distance_F{nh}-F_0} yield that
\begin{align*}
&\sqrt{n}\int_{F_{nh,\hat \b_n}(t-\b x)\in[\ee, 1-\ee]}\left\{\partial_\b F_\b(t-\b'x)\mid_{\b = \hat \b_n}-\partial_\b F_{nh,\b}(t-\b'x)\mid_{\b = \hat \b_n}\right\}\\
&\qquad\qquad\qquad\qquad\qquad\qquad\qquad\qquad\cdot\left\{\frac{F_0(t-\b_0'x) - F_{nh,\hat \b_n}(t-\hat \b_n' x)}{F_{nh,\hat \b_n}(t-\hat \b_n' x)\{1-F_{nh,\hat \b_n}(t-\hat \b_n' x)\}}\right\}\,dP_0(t,x,\d)\\
&=O_p\left(n^{-1/10}\right)+o_p\left(\sqrt{n}(\hat\b_n-\b_0)\right)
\end{align*}
The conclusion is that
$$ II = o_p\left(n^{-1/2} + (\hat \b_n-\b_0)\right) $$
We now write:
	\begin{align*}
	I&=\int_{ F_{nh,\hat\b_n}(t-\hat\b_n'x)\in[\ee,1-\ee]}\partial_\b F_\b(t-\b'x)\mid_{\b = \hat \b_n}\\
	&\qquad\qquad\qquad\qquad\qquad\qquad\qquad\cdot\frac{\d - F_{nh,\hat\b_n}(t-\hat\b_n'x)}{ F_{nh,\hat\b_n}(t-\hat\b_n'x)\{1- F_{nh,\hat\b_n}(t-\hat\b_n'x)\}}\,d\P_n(t,x,\d)\\
	&=\int_{ F_{nh,\hat\b_n}(t-\hat\b_n'x)\in[\ee,1-\ee]}\partial_\b F_\b(t-\b'x)\mid_{\b = \hat \b_n}\\
	&\qquad\qquad\qquad\qquad\qquad\qquad\qquad\cdot\frac{\d -F_{\hat\b_n}(t-\hat\b_n'x)}{ F_{n,\hat\b_n}(t-\hat\b_n'x)\{1- F_{n,\hat\b_n}(t-\hat\b_n'x)\}}\,d\P_n(t,x,\d)\\
	&\qquad+\int_{ F_{nh,\hat\b_n}(t-\hat\b_n'x)\in[\ee,1-\ee]}\partial_\b F_\b(t-\b'x)\mid_{\b = \hat \b_n}\\
	&\qquad\qquad\qquad\qquad\qquad\qquad\qquad\qquad\qquad\cdot\frac{ F_{\hat\b_n}(t-\hat\b_n'x)-F_{n,\hat\b_n}(t-\hat\b_n'x)}{\hat F_{\hat\b_n}(t-\hat\b_n'x)\{1- F_{n,\hat\b_n}(t-\hat\b_n'x)\}}\,d\P_n(t,x,\d)\\
	&= I_a + I_b.
	\end{align*}
	We now get, using Lemma \ref{lemma:distance_F{nh}-F_0} and
	\begin{align*}
	\partial_\b F_\b(t-\b'x)\mid_{\b = \hat \b_n} = \E(X-x|T-\hat \b_n'X = t- \hat \b_n'x)f_0(t-\hat \b_n'x) + O_p\left(\hat \b_n - \b_0\right),
	\end{align*} 
that $	I_b=o_p\left(n^{-1/2} + \hat \b_n-\b_0\right)$.
The result of Theorem \ref{th:asymptotic_normality} now follows by showing that,
\begin{align}
\label{a}
&\int_{ F_{nh,\hat\b_n}(t-\hat\b_n'x)\in[\ee,1-\ee]}\partial_\b F_\b(t-\b'x)\mid_{\b = \hat \b_n} \nonumber\\
&\qquad\qquad\qquad\qquad\qquad\qquad\qquad\cdot\frac{\d -F_{\hat\b_n}(t-\hat\b_n'x)}{ F_{nh,\hat\b_n}(t-\hat\b_n'x)\{1- F_{nh,\hat\b_n}(t-\hat\b_n'x)\}}\,d(\P_n-P_0)(t,x,\d)\\
&=\int_{ F_0(t-\b_0'x)\in[\ee,1-\ee]}\partial_\b F_\b(t-\b'x)\mid_{\b =  \b_0}\nonumber\\
&\qquad\qquad\qquad\qquad\qquad\qquad\qquad\cdot\frac{\d -F_0(t-\b_0'x)}{ F_0(t-\b_0'x)\{1-F_0(t-\b_0'x)\}}\,d(\P_n-P_0)(t,x,\d)]\nonumber \\
&\qquad  + o_p\left(n^{-1/2} + \hat \b_n-\b_0\right) \nonumber
\end{align}	
and,
\begin{align}
\label{b}
&\int_{ F_{nh,\hat\b_n}(t-\hat\b_n'x)\in[\ee,1-\ee]}\partial_\b F_\b(t-\b'x)\mid_{\b = \hat \b_n}\nonumber\\
&\qquad\qquad\qquad\qquad\qquad\qquad\qquad\cdot\frac{\d - F_{\hat\b_n}(t-\hat\b_n'x)}{ F_{nh,\hat\b_n}(t-\hat\b_n'x)\{1- F_{nh,\hat\b_n}(t-\hat\b_n'x)\}}\,dP_0(t,x,\d)\\
&=\psi_{3,\ee}'(\b_0)( \hat \b_n-\b_0)  + o_p\left(n^{-1/2} + \hat \b_n-\b_0\right).\nonumber
\end{align}
The proof of (\ref{a}) and (\ref{b}) is similar to the proof of the corresponding steps given in the proof of Theorem \ref{theorem:method1} and omitted.
\end{proof}

\begin{remark} {\rm
It follows from the proof of Theorem \ref{th:asymptotic_normality} that
	\begin{align*}
	&\sqrt{n} I_{\ee}(\b_0)(\hat\b_n-\b_0)\\
	&\quad=n^{-1/2}\sum_{i=1}^nf_0(T_i-\b_0'X_i)\{\E(X_i|T_i-\b_0'X_i)-X_i\}\\
	&\qquad\qquad\qquad\qquad\cdot\frac{\dd_i-F_0(T_i-\b_0' X_i)}{F_0(T_i-\b_0' X_i)\{1-F_0(T_i-\b_0' X_i)\}}1_{[\ee,1-\ee]}\left\{F_0(T_i-\b_0'X_i)\right\} +o_p(1).
	\end{align*}
Therefore the result of Theorem \ref{th:alternative_MLE-expansion} follows.}
\end{remark}

\subsection{Estimation of the intercept}
\label{sectionA:intercept}
\begin{proof}[Proof of Theorem \ref{th:intercept}]
We will denote $dx_1\dots dx_k$ by $dx$. We have
\begin{align}
\label{hat_a-a0}
\hat \a_n -\a_0 &= \int u\,d F_{nh,\hat\b_n}(u)-\int u\,d F_0(u)=\int \bigl\{F_0(u)- F_{nh,\hat\b_n}(u)\bigr\}\,du \nonumber\\
&=\int\frac{F_0(t-\hat\b_n'x)- F_{nh,\hat\b_n}(t-\hat\b_n'x)}{f_{T-\hat\b_n'X}(t-\hat\b_n'x)}\,dG(t,x) \nonumber\\
&=\int\frac{F_0(t-\hat\b_n'x)-F_0(t-\b_0'x)}{f_{T-\hat\b_n'X}(t-\hat\b_n'x)}\,dG(t,x)
+\int\frac{F_0(t-\b_0'x)- F_{nh,\hat\b_n}(t-\hat\b_n'x)}{f_{T-\hat\b_n'X}(t-\hat\b_n'x)}\,dG(t,x)
\end{align}
For the first term in the last expression we get
\begin{align*}
&\int\frac{F_0(t-\hat\b_n'x)-F_0(t-\b_0'x)}{f_{T-\hat\b_n'X}(t-\hat\b_n'x)}\,dG(t,x)\\
&\qquad=\int\bigl\{F_0(u)-F_0(u+x'(\hat\b_n-\b_0))\}f_{X|T-\hat\b_n'X}(x|T-\hat\b_n'X=u)\,du\,dx\\
&\qquad\sim-\int x'(\hat\b_n-\b_0)f_0(u)f_{X|T-\b_0'X}(x|T-\b_0'X=u)\,du\,dx\\
&\qquad\sim-\left\{\int \E\{X'|T-\b_0'X=u\}f_0(u)\,du\right\}(\hat\b_n-\b_0)
\end{align*}
This term, multiplied with $\sqrt{n}$, is asymptotically normal, with expectation zero and variance
\begin{align*}
\s_1^2\stackrel{\text{\small def}}=a(\b_0)'I_{\ee}(\b_0)^{-1}\,a(\b_0),
\end{align*}
where $a(\b_0)$ is the $k$-dimensional vector, defined by
\begin{align*}
a(\b_0)=\int \E\{X|T-\b_0'X=u\}f_0(u)\,du.
\end{align*}
For the second term in (\ref{hat_a-a0}), we first note that,
\begin{align}
\label{F:hat F:0}
F_{nh,\hat\b_n}(t-\hat\b_n'x)-F_0(t-\b_0'x)&=\frac{\int\{\d-F_0(t-\b_0'x)\}K_h(t-\hat\b_n'x-u+\hat\b_n'y)\,d\P_n(u,y,\d)}{g_{nh,\hat\b_n}(t-\hat\b_n'x)}\,.
\end{align}
We  write (\ref{F:hat F:0}) as the sum of the integral over $dP_0$ and the integral over $d(\P_n-P_0)$ and show that the contribution of the $dP_0$ integral, evaluated in (\ref{hat_a-a0}) is negligible and that the contribution of the $d(\P_n-P_0)$ integral will yield an asymptotic normal distribution.

We have
\begin{align*}
&\int\{\d-F_0(t-\b_0'x)\}K_h(t-\hat\b_n'x-u+\hat\b_n'y)\,dP_0(u,y,\d)&\\
&\qquad =\int\{F_0(u-\b_0'y)-F_0(t-\b_0'x)\}K_h(t-\hat\b_n'x-u+\hat\b_n'y)\,dG(u,y)\\
&\qquad=\int\{F_0(v+(\hat\b_n-\b_0)y)-F_0(t-\b_0'x)\}\}K_h(t-\hat\b_n'x-v)\\
&\qquad\qquad \qquad\qquad \qquad\qquad \qquad\qquad \cdot f_{T-\hat\b_n'X}(v)f_{X|T-\hat\b_n'X}(y|T-\hat\b_n'X=v)\,dv\,dy\\
&\qquad=f_{T-\hat\b_n'X}(t-\hat\b_n'x)\int\{F_0(t-\hat\b_n'x+(\hat\b_n-\b_0)y)-F_0(t-\b_0'x)\}\\
&\qquad\qquad\qquad\qquad\qquad\qquad\qquad\qquad\cdot f_{X|T-\hat\b_n'X}(y|T-\hat\b_n'X=t-\hat\b_n'x)\,dy
+O_p\left(h^2\right)\\
&\qquad=f_{T-\hat\b_n'X}(t-\hat\b_n'x)f_0(t-\b_0'x)\bigl(\hat\b_n-\b_0\bigr)'\E\{X-x|T-\hat\b_n'X=t-\hat\b_n'x\}\\
&\qquad\qquad\qquad\qquad\qquad\qquad\qquad\qquad\qquad\qquad\qquad\qquad\qquad+O_p\left(h^2\right)+o_p\bigl(\|\hat\b_n-\b_0\|\bigr),
\end{align*}
where $\|x\|$ is the euclidean norm of the vector $x$.  Hence we get
\begin{align*}
&\int\frac{\int\{\d-F_0(t-\b_0'x)\}K_h(t-\hat\b_n'x-u+\hat\b_n'y)\,dP_0(u,y,\d)}{g_{nh,\hat\b_n}(t-\hat\b_n'x)f_{T-\hat\b_n'X}(t-\hat\b_n'x)}\,dG(t,x)\\
&\quad=(\hat\b_n-\b_0)'\int \frac{f_0(t-\b_0'x)E\{X-x|T-\b_0'X=t-\b_0'x\}}{g_{nh}(t-\hat\b_n'x)}\,dG(t,x)+O_p\left(h^2\right)+o_p(\hat\b_n-\b_0)\\
&\quad=(\hat\b_n-\b_0)'\int f_0(v)\E\{X-x|T-\b_0'X=v\}f_{X|T-\b_0'X}(x|T-\b_0'X=v)\,dx\,dv\\
&\qquad\qquad\qquad\qquad\qquad\qquad\qquad\qquad\quad\qquad\qquad\qquad\qquad\qquad\qquad+O_p\left(h^2\right)+o_p\bigl(\|\hat\b_n-\b_0\|\bigr)\\
&\quad=O_p\left(h^2\right)+o_p\bigl(\|\hat\b_n-\b_0\|\bigr),
\end{align*}
which is $o_p(n^{-1/2})$ if $h\ll n^{-1/4}$. 

Finally,
\begin{align*}
&\sqrt{n}\int\frac{\int\{\d-F_0(t-\b_0'x)\}K_h(t-\hat\b_n'x-u+\hat\b_n'y)\,d\bigl(\P_n-P_0\bigr)(u,y,\d)}{g_{nh,\hat\b_n}(t-\hat\b_n'x)f_{T-\hat\b_n'X}(t-\hat\b_n'x)}\,dG(t,x)&\\
&\qquad=\sqrt{n}\iint\frac{\{\d-F_0(t-\b_0'x)\}K_h(t-\hat\b_n'x-u+\hat\b_n'y)}{g_{nh,\hat\b_n}(t-\hat\b_n'x)f_{T-\hat\b_n'X}(t-\hat\b_n'x)}\,dG(t,x)\,d\bigl(\P_n-P_0\bigr)(u,y,\d)\\
&\qquad=\sqrt{n}\int\frac{\{\d-F_0(u-\b_0'y)\}}{f_{T-\b_0'X}(u-\b_0'y)}\,d\bigl(\P_n-P_0\bigr)(u,y,\d)+O_p\left(h^2\right)+O_p\bigl(\|\hat\b_n-\b_0\|\bigr)
\end{align*}
is asymptotically normal, with expectation zero and variance
\begin{align}
\label{curstat_var}
&\int\frac{F_0(v)\{1-F_0(v)\}}{f_{T-\b_0'X}(v)}\,dv,
\end{align}
if $h\ll n^{-1/4}$.

Both terms in the representation on the right of (\ref{hat_a-a0}) are, apart from a negligible contribution, sums of independent variables with expectation zero. By Theorem \ref{th:alternative_MLE-expansion} we have
\begin{align*}
&\sqrt{n}(\hat\b_n-\b_0)\\
&\quad=n^{-1/2} I_{\ee}(\b_0)^{-1}\sum_{i=1}^nf_0(T_i-\b_0X_i)\{\E(X_i|T_i-\b_0'X_i)-X_i\}\\
&\qquad\qquad\qquad\qquad\cdot\frac{\dd_i-F_0(T_i-\b_0' X_i)}{F_0(T_i-\b_0' X_i)\{1-F_0(T_i-\b_0' X_i)\}}1_{[\ee, 1-\ee]}\left\{F_0(T_i-\b_0'X_i)\right\} +o_p(1).
\end{align*}
 and the second term of (\ref{hat_a-a0}) has the representation
 \begin{align*}
 n^{-1/2}\sum_{i=1}^n\frac{\dd_i-F_0(T_i-\b_0'X_i)}{f_{T-\b_0'X}(T_i-\b_0'X_i)}.
 \end{align*}
 By the independence of the summands with indices $i\ne j$, the only contribution to the covariance of the two terms in the representation can come from summands with the same index. But,
 \begin{align*}
 &\E\left\{\frac{ f_0(T_i-\b_0'X_i)\{\E(X_i|T_i-\b_0'X_i)-X_i\}\{\dd_i-F_0(T_i-\b_0' X_i)\}^2}{F_0(T_i-\b_0' X_i)\{1-F_0(T_i-\b_0'X_i)\}f_{T-\b_0'X}(T_i-\b_0'X_i)}1_{[\ee, 1-\ee]}\{F_0(T_i-\b_0'X_i)\}\right\}.
 \\
 &= \int_{F_0(u-\b_0'y)\in[\ee, 1-\ee]}\frac{ f_0(u-\b_0'y)\{\E(X|T-\b_0'X=u -\b_0'y)-y\}\{\d-F_0(u-\b_0' y)\}^2}{F_0(u-\b_0'y)\{1-F_0(u-\b_0'y)\}f_{T-\b_0'X}(u-\b_0'y)}dP_0(u,y,\d)
 \\
 &= \iint_{F_0(v)\in[\ee, 1-\ee]}\frac{ f_0(v)\{\E(X|T-\b_0'X=v)-y\}F_0(v)\{1-F_0(v)\}}{F_0(v)\{1-F_0(v)\}}f_{X|T-\b_0'X}(y|v) dvdy
 \\
 &= \int_{F_0(v)\in[\ee, 1-\ee]} \int \{\E(X|T-\b_0'X=v)-y\}f_{X|T-\b_0'X}(y|v) dy \frac{f_0(v) F_0(v)\{1-F_0(v)\}}{F_0(v)\{1-F_0(v)\}}dv
 \\
 &= 0
 \end{align*} 
 So the covariance is zero and Theorem \ref{th:intercept} follows.

\end{proof}

\bibliographystyle{imsart-nameyear}
\bibliography{cupbook}

\end{document}